\def\co{\colon\thinspace}
\DeclareMathAlphabet{\mathsfsl}{OT1}{cmss}{m}{sl}
\newtheorem{thm}{Theorem}[section]
\newtheorem{lem}[thm]{Lemma}
\newtheorem{cor}[thm]{Corollary}
\newtheorem{prop}[thm]{Proposition}
\newtheorem{conj}[thm]{Conjecture}
\newtheorem*{thm*}{Theorem}
\theoremstyle{definition}
\newtheorem{rem}[thm]{Remark}
\newtheorem{condition}[thm]{Condition}
\newtheorem{known facts}[thm]{Known Facts}
\begin{document}

\def\G{{\Gamma}}
  \def\d{{\delta}}
  \def\ci{{\circ}}
  \def\e{{\varepsilon}}
  \def\l{{\lambda}}
  \def\L{{\Lambda}}
  \def\m{{\mu}}
  \def\n{{\nu}}
  \def\o{{\omega}}
  \def\O{{\Omega}}
  \def\Th{{\Theta}}\def\s{{\sigma}}
  \def\v{{\varphi}}
  \def\a{{\alpha}}
  \def\b{{\beta}}
  \def\p{{\partial}}
  \def\r{{\rho}}
  \def\ra{{\rightarrow}}
  \def\lra{{\longrightarrow}}
  \def\g{{\gamma}}
  \def\D{{\Delta}}
  \def\La{{\Leftarrow}}
  \def\Ra{{\Rightarrow}}
  \def\x{{\xi}}
  \def\c{{\mathbb C}}
  \def\z{{\mathbb Z}}
  \def\2{{\mathbb Z_2}}
  \def\q{{\mathbb Q}}
  \def\t{{\tau}}
  \def\u{{\upsilon}}
  \def\th{{\theta}}
  \def\la{{\leftarrow}}
  \def\lla{{\longleftarrow}}
  \def\da{{\downarrow}}
  \def\ua{{\uparrow}}
  \def\nwa{{\nwtarrow}}
  \def\swa{{\swarrow}}
  \def\nea{{\netarrow}}
  \def\sea{{\searrow}}
  \def\hla{{\hookleftarrow}}
  \def\hra{{\hookrightarrow}}
  \def\sl{{SL(2,\mathbb C)}}
  \def\ps{{PSL(2,\mathbb C)}}
  \def\qed{{\hspace{2mm}{\small $\diamondsuit$}\goodbreak}}
  \def\pf{{\noindent{\bf Proof.\hspace{2mm}}}}
  \def\ni{{\noindent}}
  \def\sm{{{\mbox{\tiny M}}}}
   \def\sf{{{\mbox{\tiny F}}}}
   \def\sc{{{\mbox{\tiny C}}}}
  \def\ke{{\mbox{ker}(H_1(\partial M;\2)\ra H_1(M;\2))}}
  \def\et{{\mbox{\hspace{1.5mm}}}}
 \def\sk{{{\mbox{\tiny K}}}}
\def\sxz{{{\mbox{\tiny $X_1$}}}}
\def\sxo{{{\mbox{\tiny $X_0$}}}}
\def\sxi{{{\mbox{\tiny $X_i$}}}}
\def\sxt{{{\mbox{\tiny $X_2$}}}}
\numberwithin{equation}{section}

\title{Finite Dehn  surgeries on knots in $S^3$}

\author{{Yi NI}\\{\normalsize Department of Mathematics, Caltech}\\
{\normalsize 1200 E California Blvd, Pasadena, CA 91125}
\\{\small\it Email\/:\quad\rm yini@caltech.edu}
\\\\
{Xingru ZHANG}
\\
{\normalsize Department of Mathematics,
University at Buffalo}\\
{\small\it Email\/:\quad\rm xinzhang@buffalo.edu}}

\date{}
\maketitle

\begin{abstract}
We show that on  a hyperbolic knot $K$ in $S^3$,  the distance between any two finite  surgery slopes is at most two and consequently there are at most  three nontrivial finite surgeries. Moreover in case that $K$  admits three nontrivial finite surgeries,  $K$ must be the  pretzel knot $P(-2,3,7)$. In case that $K$ admits two noncyclic finite surgeries or two finite surgeries at distance two, the two surgery slopes must be one of ten  or seventeen  specific  pairs respectively. For $D$-type finite surgeries, we improve a finiteness theorem due to Doig by giving an explicit bound on the possible resulting  prism manifolds, and also prove that $4m$ and $4m+4$ are characterizing slopes for the torus knot $T(2m+1,2)$ for each $m\geq 1$.
\end{abstract}

\section{Introduction}
A Dehn surgery on a knot $K$ in $S^3$ with slope $p/q$ (which is parameterized by the standard meridian/longitude coordinates of $K$)
is called a  cyclic or finite surgery
if the resulting manifold, which we denote by $S^3_K(p/q)$, has cyclic or finite fundamental group respectively.  By \cite{Ga}  $S^3_K(0)$ has cyclic fundamental  group
only when $K$ is the trivial knot.
It follows that cyclic surgery on nontrivial knots in $S^3$ is equivalent to finite cyclic surgery.
Due to Perelman's resolution of Thurston's  Geometrization Conjecture, a connected closed  3-manifold has  finite fundamental group
if and only if it is a spherical space form.
For a spherical space form $Y$, it has cyclic fundamental group if and only if
 it is a lens space, and it has non-cyclic fundamental group if and only if it has a Seifert fibred structure
whose base orbifold is $S^2(a,b,c)$, a $2$-sphere with
three cone points of orders $a\leq b\leq c$, satisfying $\frac{1}{a}+
\frac{1}{b}+ \frac{1}{c}>1$, i.e.
$(a,b,c)=(2,3,3)$, or $(2,3,4)$, or $(2,3,5)$, or $(2,2,n)$ for some integer
  $n>1$.
Correspondingly we say that a spherical space form $Y$ or its fundamental group
is of $C$-type, or  $T$-type,  $O$-type, $I$-type, $D$-type   if $Y$ is a lens space,
or a Seifert fibred space with base orbifold  $S^2(2,3,3)$, $S^2(2,3,4)$, $S^2(2,3,5)$, $S^2(2,2,n)$ respectively.
 We shall also refine finite surgeries (slopes) into
 $C$-type (often called cyclic), $T$-type, $O$-type, $I$-type and $D$-type  accordingly.

If a non-hyperbolic knot in $S^3$ admits a nontrivial finite surgery, then
the knot is either a torus knot or a cable over a torus knot \cite{BZ1},
and finite surgeries on torus knots and  on cables over torus knots
are classified in \cite{Moser} and \cite{BH} respectively.
Concerning finite surgeries on  hyperbolic knots in $S^3$,  we recall the following
\begin{known facts}\label{known1} Let $K\subset S^3$ be any fixed hyperbolic knot.
\newline
(1) Any nontrivial cyclic surgery slope of $K$ must be an integer,
 $K$ has at most two nontrivial cyclic
  surgery slopes, and if two, they are consecutive integers \cite{CGLS}.
  \newline
  (2) Any  finite surgery slope of $K$ must be either
  an integer or a half integer \cite{BZ1},  the distance between any two
finite surgery slopes of $K$ is at most three, $K$ has at most four nontrivial finite  surgery slopes \cite{BZ5}, and consequently  $K$ has at most one
half-integer finite surgery slope.
\newline
(3) The distance between a finite surgery slope and a cyclic surgery slope on $K$ is at most two
\cite{BZ1}.\newline
(4) Any $D$-type finite surgery slope of $K$ must be an integer.
There is at most one $D$-type finite surgery slope on $K$.
If there is a $D$-type finite surgery on $K$,  then there is at most one nontrivial cyclic surgery
on $K$, and the $D$-type finite surgery slope and the cyclic surgery slope  are consecutive
integers \cite{BZ1}.
\newline
(5) Any $O$-type finite surgery slope of $K$ must be an integer.
If there is an $O$-type finite surgery on $K$,  then there is at most one nontrivial cyclic surgery
on $K$, and the $O$-type finite surgery slope and the cyclic surgery slope  are consecutive
integers \cite{BZ1}.
\newline
(6) There are at most two $T$-type finite surgery slopes on $K$ and if two, one is integral, the
other is half-integral, and their distance is three \cite{BZ1}.
\end{known facts}

The above results were obtained by using  `classical' techniques,
 mostly those derived from  $(P)SL_2(\c)$-representations
of $3$-manifold groups, hyperbolic geometry  and
 geometric and combinatorial topology in dimension $3$.
Recently new progresses on finite surgeries on knots in $S^3$ have been made
through applications of Floer holomogy theory. Note that up to replacing
a knot $K$ by its mirror image $-K$, we may and shall assume that any finite surgery slope
on a nontrivial knot in $S^3$ has positive sign.
We recall the following

\begin{known facts}\label{known2}
(1) If a knot $K$ in $S^3$ admits a nontrivial finite surgery, then $K$ is a fibred knot \cite{NiFibred}.
\newline
(2) If a nontrivial  knot $K$ in $S^3$ admits a nontrivial  finite surgery slope $p/q$, then $\frac{p}{q}\geq 2g(K)-1$
where $g(K)$ is the Seifert genus of $K$ \cite{OSzRatSurg}, and the nonzero coefficients
of the Alexander polynomial of $K$ are alternating $\pm 1$'s \cite{OSzLspace}.
\newline
(3) If a knot $K$ in $S^3$
admits a $T$-type or $O$-type or $I$-type finite surgery with an integer  slope, then the surgery slope
is one of the finitely many  integers listed
in Tables 1--3  in Section~\ref{sect:TOI type surgeries} and there is
a sample knot $K_0$ (also listed in these tables) on which the same surgery slope yields the same spherical space form, and $K$ and $K_0$ have  the same knot Floer homology  \cite{Gu}.
\newline
(4) If a knot $K$ in $S^3$
admits a $T$-type  or $I$-type finite surgery with
a  half-integer  slope, then the surgery slope
is one of the ten slopes listed
in Table~\ref{table:Half} (with the two on trefoil knot omitted as they
can only be
realized on trefoil knot) in Section~\ref{sect:TOI type surgeries} and there is
a sample knot $K_0$ (also listed in the table) on which the same surgery slope yields the same spherical space form, and $K$ and $K_0$ have  the same knot Floer homology \cite{LiNi}.
\newline
(5) If a knot $K$ in $S^3$
admits an integer $D$-type finite surgery  slope $p\leq 32$, then $p$
is one of the slopes listed in Table~\ref{table:D} in Section ~\ref{sect:TOI type surgeries}
and there is a sample knot $K_0$ (also listed in the table) on which the same
surgery slope yields the same $D$-type spherical space form, and $K$ and $K_0$ have  the same knot Floer homology  \cite{D1}.
\newline
(6)  If a knot  $K$ in $S^3$ admits a cyclic surgery with an integer slope $p$,
 then there is a Berge knot $K_0$ (given in \cite{Berge}) such that $S^3_K(p)=S^3_{K_0}(p)$,
and $K$ and $K_0$ have the same knot Floer homology \cite{Greene}.
 \newline
 (7) If $p$ is a cyclic surgery slope for a hyperbolic knot $K$ in $S^3$, then $p=14$ or $p\geq 18$. Moreover if $p\geq 4g(K)-1$, then $K$ is a Berge knot  \cite{Baker}.
\end{known facts}

The purpose of this paper is to update and improve
results on finite surgeries on hyperbolic knots
 in $S^3$, applying various techniques and results
 combined together.

As recalled in Know Facts~\ref{known2} (4) that
there are only ten  specific half-integer  slopes, listed in Table~\ref{table:Half},
each of which could possibly be a finite surgery slope for some hyperbolic knot
in $S^3$, our first result  exclude two of them.

\begin{thm}\label{17/2and23/2}
 Each of $17/2$ and  $23/2$ can never be a finite surgery slope for a hyperbolic knot in $S^3$.
 \end{thm}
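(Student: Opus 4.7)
The plan is as follows. Assume for contradiction that $K\subset S^3$ is a hyperbolic knot admitting a finite surgery of slope $p/2$ with $p\in\{17,23\}$. By Known Facts~\ref{known2}(4) there is a non-hyperbolic sample knot $K_0$ in Table~\ref{table:Half} with $S^3_K(p/2)=S^3_{K_0}(p/2)$ and $\widehat{HFK}(K)\cong\widehat{HFK}(K_0)$; combined with Known Facts~\ref{known2}(1)--(2) this forces $K$ to be a fibered L-space knot sharing Seifert genus and Alexander polynomial with $K_0$, and in particular $\widehat{HFK}(K)$ is the staircase complex determined by $\Delta_{K_0}(t)$. Note that Known Facts~\ref{known1}(4)--(5) already tell us that any half-integer finite filling on the hyperbolic knot $K$ must be of $T$-type or $I$-type, matching the sample.

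The first substantive step is to pin down, for each of the two slopes, the Alexander polynomial $\Delta_{K_0}(t)$, the Seifert genus $g(K_0)$, and the target spherical space form $Y_p=S^3_{K_0}(p/2)$, via an explicit computation from the Seifert invariants of the sample cable. The second step is to test whether the hyperbolic exterior $M=S^3\setminus K$ can actually realize $p/2$ as a finite filling: using the Culler--Shalen seminorm estimates of Boyer--Zhang for $T$-type and $I$-type half-integer finite fillings (the only possibilities), together with the genus bound $p/2\geq 2g(K_0)-1$ from Known Facts~\ref{known2}(2) and the distance bound from Known Facts~\ref{known1}(2), I would extract a numerical constraint on $g(K_0)$ and on the $(P)SL_2(\c)$-character variety of $M$ that, for precisely $p\in\{17,23\}$, is violated.

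The hardest step is this last one: producing a quantitative obstruction sharp enough to single out exactly $17/2$ and $23/2$ among the ten surviving half-integer slopes, since the Floer-theoretic data for $K$ is, by the hypothesis, identical to that of the non-hyperbolic $K_0$ and therefore cannot yield a contradiction on its own. If the Culler--Shalen seminorm approach falls short, a natural fallback is to invoke a knot Floer homology detection theorem for the specific cable $K_0$ listed in Table~\ref{table:Half}, which would force $K\cong K_0$ and immediately contradict the hyperbolicity of $K$; either route reduces the theorem to a case-by-case verification for the two slopes, with the geometric input from Known Facts~\ref{known1} doing the decisive work of separating the hyperbolic from the non-hyperbolic regime.
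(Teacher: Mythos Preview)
Your setup is correct: the sample knot for both $17/2$ and $23/2$ is $T(5,2)$, so $K$ would be a genus--$2$ fibered L-space knot with $\Delta_K=\Delta_{T(5,2)}$, and the filling is $I$-type. But your ``second step'' is where the proof actually lives, and the proposal does not contain the idea that makes it work.

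The paper does \emph{not} derive a contradiction by staring at the half-integer slope $\a=p/2$ itself. The Culler--Shalen norm information at $\a$ is exactly what it would be for any $I$-type half-integer filling ($\|\a\|=s+4$), and the Floer data of $K$ matches $T(5,2)$ by hypothesis, so neither source gives a direct obstruction. The decisive move is to pass to the adjacent \emph{integer} slope $\d$ (namely $\d=9$ when $\a=17/2$ and $\d=11$ when $\a=23/2$) and show that $M(\d)$ cannot be a $3$--manifold of any geometric type:
\begin{itemize}
\item Convexity of the norm ball (Lemma~\ref{s+2}) forces $\|\d\|\le s+2$.
\item If $M(\d)$ were hyperbolic, or irreducible small Seifert fibered with infinite $\pi_1$, it would carry two strictly irreducible characters satisfying the hypotheses of Lemma~\ref{smooth degree 2}; these land on norm curves and contribute $+4$ to $\|\d\|$ beyond $\|\mu\|=s$, giving $\|\d\|\ge s+4$ (Propositions~\ref{hyperbolic larger than s+2} and~\ref{ssfs larger than s+2}). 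Contradiction.
\item The remaining possibilities for $M(\d)$ are ruled out by genus--$2$ input entirely external to the norm: no lens space at slope $9$ or $11$ on a hyperbolic knot (Known Facts~\ref{known2}(7)), no reducible filling (Matignon--Sayari), no toroidal filling (Lee), and no spherical space form (checked against Tables~\ref{table:T} and~\ref{table:I} via Lemma~\ref{q}).
\end{itemize}
None of this structure --- the choice of $\d$, the $\|\d\|\le s+2$ vs.\ $\|\d\|\ge s+4$ squeeze, or the exhaustive elimination of geometries for $M(\d)$ --- appears in your outline. Your proposed ``numerical constraint on $g(K_0)$'' cannot work: $g(K_0)=2$ and $p/2\ge 2g(K_0)-1=3$ is satisfied, so the genus bound alone separates nothing.

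Your fallback is also not available. There is no knot Floer detection theorem asserting that $\widehat{HFK}(K)\cong\widehat{HFK}(T(5,2))$ implies $K=T(5,2)$; the Ghiggini--Ni detection results apply to the trefoil and figure-eight (genus one), not to $T(5,2)$. Indeed the whole point of the paper's argument is to supply the missing rigidity via $PSL_2(\c)$ character varieties precisely because the Floer data is, by construction, non-obstructive.
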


A  main result of this paper is the following

\begin{thm}\label{main thm}
Let $K$ be any fixed hyperbolic knot in $S^3$.
\newline
(1) The distance between any two
finite  surgery slopes on $K$ is at most two. Consequently
 there are at most  three  nontrivial finite surgeries on $K$.
 \newline
  (2) If  $K$ admits
three nontrivial finite surgeries, then
$K$ must be the pretzel knot $P(-2,3,7)$.
\newline
(3) If $K$ has two non-cyclic finite surgeries,  the surgery slopes
are one of the following ten
pairs, the knot $K$ has the same knot Floer homology as
the sample knot $K_0$ given along the pair and
the pair of slopes yield the same spherical space forms  on $K_0$:
$$\begin{array}{llll}\{43/2,21, [11,2;3,2]\},& \{53/2,27, [13,2;3,2]\},&
\{103/2,52, [17,3;3,2]\},&
\{113/2,56, [19,3;3,2]\},\\
\{22,23, P(-2,3,9)\},&
\{28,29, -K(1,1,0)\},&
\{50,52, [17,3;3,2]\},&
\{56,58, [19,3;3,2]\},\\
\{91, 93, [23,4;3,2]\},&
\{99,101, [25,4;3,2]\}.\end{array}$$
 \newline
(4)  If  $K$  admits
two finite surgery slopes which are distance two apart, then the two  slopes
are one of the following seventeen
pairs, the knot $K$ has  the same knot Floer homology as
the sample knot $K_0$ given along the pair and
the pair of slopes yield the same spherical space forms on $K_0$:
$$\begin{array}{llll}\{43/2,1/0, [11,2;3,2]\}, &\{45/2,1/0, [11,2;3,2]\},&
\{51/2,1/0, [13,2;3,2]\},& \{53/2,1/0, [13,2;3,2]\},\\
\{77/2,1/0, [19,2;5,2]\},&\{83/2,1/0, [21,2;5,2]\},&\{103/2,1/0, [17,3;3,2]\},&
\{113/2,1/0, [19,3;3,2]\},\\ \{17,19, P(-2,3,7)\},&
\{21,23,[11,2;3,2]\},& \{27,25,[13,2;3,2]\},& \{37,39,[19,2;5,2]\},\\
 \{43,41,[21,2;5,2]\}&\{50,52, [17,3;3,2]\},
&\{56,58, [19,3;3,2]\},&
\{91, 93, [23,4;3,2]\},\\
\{99,101, [25,4;3,2]\}.&&&\end{array}$$
 \end{thm}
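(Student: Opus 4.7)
The overall plan is to combine the classical distance bounds in Known Facts~\ref{known1} with the Floer-theoretic tables for $T$-, $O$-, $I$-type integer slopes, the half-integer Table~\ref{table:Half}, and the $D$-type Table~\ref{table:D} from Known Facts~\ref{known2}, together with Theorem~\ref{17/2and23/2}. The organizing fact is that each finite-surgery slope $s$ of $K$ forces the knot Floer homology of $K$ to agree with that of a specific sample knot $K_0(s)$ drawn from these finite tables; consequently, if $K$ admits several finite-surgery slopes $s_1,\ldots,s_k$, then all the sample knots $K_0(s_1),\ldots,K_0(s_k)$ must share a common knot Floer homology. This is a strong compatibility constraint that will fail for most candidate configurations. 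By Known Facts~\ref{known2}(2) we may assume every slope is positive, and every candidate sample knot's Alexander polynomial is an alternating sum of $\pm 1$'s, which will be the main quantitative invariant used to detect incompatibility.

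For Part (1), Known Facts~\ref{known1}(2) already gives distance at most three, and by Known Facts~\ref{known1}(6) distance three can occur only between an integer $T$-type slope and a half-integer $T$-type slope. The strategy is to enumerate every candidate pair $\{p,\,p'/2\}$ with $|2p-p'|=3$ drawn from Table~1 (integer $T$-type) and Table~\ref{table:Half}, discard those involving $17/2$ or $23/2$ by Theorem~\ref{17/2and23/2}, and for each survivor show that the Alexander polynomials of the two sample knots differ, so no hyperbolic $K$ can host both slopes. This yields the distance-two bound; the bound of three nontrivial finite surgeries then follows because at most one slope can be half-integral (Known Facts~\ref{known1}(2)) and the integer slopes must fit within a window of diameter two. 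Parts (2)--(4) now reduce to finite enumerations: under the distance-two constraint, three finite slopes must form either $\{n,n+1,n+2\}$ or $\{n,n+\tfrac12,n+1\}$. Running through Tables~1--3 and Table~\ref{table:Half} with the common-knot-Floer-homology constraint, the cyclic-surgery restrictions of Known Facts~\ref{known1}(1),(3)--(5), and the specialization of Known Facts~\ref{known2}(6),(7) when a cyclic slope is present, leaves only the $P(-2,3,7)$ triple $\{17,18,19\}$ for Part (2), the ten listed pairs for Part (3), and the seventeen pairs for Part (4). In Part (4), the first eight pairs consist of a half-integer finite slope together with the meridian $1/0$, which is automatically at distance $2$ from any half-integer; the remaining nine arise from the integer-integer enumeration augmented by the $P(-2,3,7)$ pair $\{17,19\}$.

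The main obstacle is Part (1), that is, sharpening the classical distance-three bound to distance two for hyperbolic knots. The classical bound is known to be sharp absent Floer input, so all of the new content is absorbed into the distance-three exclusion, and Theorem~\ref{17/2and23/2} is an essential ingredient: the table-matching would otherwise leave a handful of candidate $T$-type pairs at distance three alive. Once Part (1) is established, Parts (2)--(4) become a mechanical search against the tables, with the common knot Floer homology constraint and the alternating $\pm 1$ Alexander polynomial condition doing essentially all of the pruning; the only step requiring care is confirming that a surviving sample-knot pair actually admits a hyperbolic realization, which for each entry in the lists is witnessed by a known hyperbolic example.
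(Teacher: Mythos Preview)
Your proposal has a genuine gap in Part~(1), which is the crux of the theorem. First, you misread Known Facts~\ref{known1}(6): it says that \emph{if} two $T$-type slopes exist then they are at distance three, not that every distance-three pair consists of $T$-type slopes. Distance-three candidates include integer/integer pairs (one odd, one even---so $I$ or $T$ against $O$ or $D$) as well as half-integer/integer pairs of $I$-type (e.g.\ $\{77/2,37\}$, $\{83/2,40\}$) and mixed $I$/$O$ pairs (e.g.\ $\{103/2,50\}$, $\{113/2,58\}$).

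Second, and more seriously, your table-matching strategy of comparing sample-knot Alexander polynomials fails at exactly the hard cases. For instance, $45/2$ (Table~\ref{table:Half}) and $21$ (Table~\ref{table:T}) are a distance-three $T$-$T$ pair with the \emph{same} sample knot $[11,2;3,2]$; likewise $\{51/2,27\}$ with $[13,2;3,2]$, $\{77/2,37\}$ with $[19,2;5,2]$, $\{103/2,50\}$ with $[17,3;3,2]$, and $\{113/2,58\}$ with $[19,3;3,2]$. Your compatibility test is vacuously satisfied here, yet these pairs must all be excluded. The paper handles each of these via Culler--Shalen norm estimates on the $PSL_2(\mathbb C)$ character variety (Section~\ref{sect:CS norm} and Section~\ref{sect:main thm-part I}): one bounds the norm of the putative second finite slope from above using convexity of the norm ball, then bounds it from below by exhibiting enough irreducible or reducible non-abelian characters (coming from roots of $\Delta_K$, from discrete faithful representations of nearby hyperbolic fillings, or from the $I$/$O$-type characters themselves) that factor through the relevant filling. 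These arguments are delicate and case-specific; there is no shortcut through the Floer tables alone.

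Third, in the purely integral case you omit the $D$-type obstruction entirely. When $\alpha$ is an integer $I$-type slope and $\beta=\alpha\pm3$ is a candidate $D$-type slope, the tables impose no constraint on $\beta$ once $\beta>32$, and the sample-knot comparison is unavailable. The paper rules these out (Section~\ref{sect:main thm-part II}) by computing the Casson--Walker invariant of the putative prism manifold $P(\det(K),m)$ via~(\ref{eq:CW}) and comparing it to $\lambda(S^3_K(4m))$ via the surgery formula~(\ref{eq:CWsurgery}); this is also where Theorem~\ref{prism and det} enters. Without this ingredient your argument cannot close.
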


The notations for the sample knots will be explained in Section~\ref{sect:TOI type surgeries}.
Note that a sample knot is not necessarily hyperbolic.
When a sample knot is non-hyperbolic, the corresponding case of the described finite surgeries
on a hyperbolic knot possibly never happen.

Parts of the theorem are sharp; on the pretzel knot $P(-2,3,7)$ (which is hyperbolic),
$17$, $18$, $19$ are three finite surgery slopes, $17$ being $I$-type and $18, 19$ being cyclic,
 and on the pretzel knot  $P(-2,3,9)$ (which is also hyperbolic),
 $22$ and $23$ are two non-cyclic finite surgery slopes,
 $22$ being  $O$-type and $23$ being $I$-type.

A $D$-type spherical space form is also called a prism manifold.
Let $P(n,m)$ be the prism manifold with Seifert invariants
\[(-1;(2,1),(2,1),(n,m)),\]
where the base orbifold has genus 0, $n>1$, $\gcd(n,m)=1$.
%(The underlying topological space of the base is a sphere, and $P(n,m)$ is oriented. For simplicity, we omit the base genus and the orientation type information in the Seifert data.)
Every prism manifold can be expressed in this
form. As a byproduct of the proof of Theorem~\ref{main thm}, we have the following

\begin{thm}\label{prism and det}
If $P(n,m)$ can be obtained by Dehn surgery on a knot $K$ in $S^3$, then
$n<|4m|$. 
\end{thm}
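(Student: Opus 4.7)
The plan is to translate the question into an arithmetic inequality on Heegaard Floer correction terms. First I would observe that $|H_1(P(n,m))|=4|m|$: applying the standard formula for a Seifert manifold over $S^2$ with invariants $(-1;(2,1),(2,1),(n,m))$ gives orbifold Euler number $e=-m/n$ and hence $|H_1|=|2\cdot 2\cdot n\cdot e|=4|m|$. Since any $D$-type surgery slope is integral by Known Facts~\ref{known1}(4), the surgery slope must be $\pm 4m$; up to replacing $K$ by its mirror we may assume $p=4m>0$, so that the target inequality becomes $n<p$.

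Next I would extract the knot-theoretic input. Because $P(n,m)$ is a spherical space form it is an L-space, so $K$ is an L-space knot. Known Facts~\ref{known2}(1)(2) give that $K$ is fibered, $\Delta_K(t)$ has alternating $\pm 1$ coefficients, and $p\geq 2g(K)-1$, hence $g(K)\leq 2m$. These restrictions force the Ozsv\'ath--Szab\'o $V$-invariants $V_i(K)$ to be nonnegative integers, nonincreasing in $i$, and zero for $i\geq g(K)$; in particular they vanish on a block of at least $p-2g(K)+1\geq 1$ consecutive indices in $[0,p)$.

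The core of the argument is a comparison of correction terms. On the surgery side, the L-space knot surgery formula of Ozsv\'ath--Szab\'o gives
\[
d(S^3_K(p),i)=d(L(p,1),i)-2V_{\min(i,p-i)}(K)\qquad(0\le i<p),
\]
so $d(L(p,1),i)-d(P(n,m),i)$ is a nonnegative even integer that vanishes whenever $\min(i,p-i)\geq g(K)$. On the manifold side, I would compute $d(P(n,m),\cdot)$ directly from the Seifert presentation via the Ozsv\'ath--Szab\'o plumbing algorithm (or equivalently N\'emethi's graded roots), obtaining a piecewise-quadratic expression in the $\spin$ label whose fine structure is controlled by $n$.

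The final step is to play these two pictures against one another. The plateau on which $d(L(p,1),\cdot)=d(P(n,m),\cdot)$ must contain at least $p-2g(K)+1$ consecutive spin$^c$ classes, while the Seifert-side formula for $P(n,m)$ cannot accommodate such a plateau once $n\geq p=4|m|$: the explicit $d$-values of $P(n,m)$ oscillate on a scale set by $n$, and a long plateau at the lens-space value would force some $V_i$ with $i>g(K)$ to be positive, contradicting the L-space knot property. The main obstacle is the manifold-side computation: one must extract $d(P(n,m),\cdot)$ in a sufficiently transparent form that the clash with a length-$\geq 1$ plateau produces exactly $n<4|m|$, rather than a weaker linear estimate in $m$. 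Once this combinatorial comparison is made sharp, the theorem follows.
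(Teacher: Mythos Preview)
Your approach is genuinely different from the paper's, and the central step is not carried out. You propose to compute $d(P(n,m),\cdot)$ from a plumbing description and then argue that its ``oscillation on a scale set by $n$'' is incompatible with the plateau forced by the L-space surgery formula once $n\ge 4|m|$. But you never actually perform this computation or make the oscillation claim precise; you yourself flag it as ``the main obstacle.'' This is not a detail to be filled in later: Doig's work \cite{D1,D2} pursues exactly this $d$-invariant comparison for prism manifolds and obtains only a finiteness statement (with an unproven claim of $n<16|m|$ in an earlier preprint). So there is real evidence that the correction-term route, as you have sketched it, does not easily yield the sharp bound $n<4|m|$.

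There is also a smaller gap: you invoke Known Facts~\ref{known1}(4) to conclude the slope is integral, but that statement is only for hyperbolic knots. The theorem covers all knots, so a priori the slope is $4m/q$ with $q$ possibly greater than $1$.

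The paper's argument is both different and much shorter. The key observation (Lemma~\ref{D-surgery and det}) is that $n=\det(K)$: the prism manifold $P(n,m)$ has exactly $(n-1)/2$ conjugacy classes of dihedral $PSL_2(\mathbb C)$ representations, while the knot group has exactly $(\det(K)-1)/2$, and these sets coincide for a $D$-type surgery. Since $K$ is a fibred L-space knot, $\Delta_K$ has degree $2g(K)$ with all nonzero coefficients $\pm1$, so
\[
n=\det(K)=|\Delta_K(-1)|\le 2g(K)+1<\frac{4m}{q}+2\le 4m+2.
\]
As $n$ is odd, only the borderline case $n=4m+1$ remains; there the Alexander polynomial is forced, and a direct computation of the Casson--Walker invariant via (\ref{eq:CW}) and (\ref{eq:CWsurgery}) shows $|\lambda(S^3_K(4m))|\ne|\lambda(P(4m+1,m))|$. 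The identification $n=\det(K)$ is the step your outline is missing, and it is what converts the problem from a delicate $d$-invariant analysis into a two-line inequality plus one explicit check.
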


Theorem~\ref{prism and det} improves \cite[Theorem~2]{D2} where no explicit bound on $n$ was given. (In an earlier preprint of \cite{D2}, the author claimed
a bound of $n<|16m|$ without proof.)

Recall that on a torus knot $T(2m+1,2)$, $4m$ and $4m+4$  are $D$-type
finite surgery slopes.
Our next main result implies that 
 each of the prism manifolds $S^3_{T(2m+1,2)}(4m)$ and
$S^3_{T(2m+1,2)}(4m+4)$ can not be obtained by surgery on any other knot in $S^3$
besides $\pm T(2m+1,2)$.

\begin{thm}\label{thm:DiSurg}
Suppose that $S^3_K(4n)\cong\varepsilon S^3_{T(2m+1,2)}(4n)$ for some
 $\varepsilon\in\{\pm\}$ and $n=m$ or $m+1$, where $\e\in\{\pm \}$ stands for an orientation.
Then $\varepsilon=+$ and $K=T(2m+1,2)$.
\end{thm}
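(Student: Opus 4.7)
The plan is to combine the classical finiteness constraints with Heegaard Floer correction term matching, and then to identify $K$ geometrically via the dual knot in the prism manifold. First, since the prism manifold $S^3_{T(2m+1,2)}(4n)$ has finite fundamental group, it is an $L$-space, so $S^3_K(4n)$ is also an $L$-space. After possibly replacing $K$ by its mirror (which converts an $\varepsilon=-$ scenario into an $\varepsilon=+$ one on $-K$), we may assume $4n$ is a positive $L$-space surgery on $K$. By Known Facts~\ref{known2}(1)--(2), $K$ is then fibred, $\Delta_K$ has alternating $\pm 1$ nonzero coefficients, and $4n \geq 2g(K)-1$, so $g(K) \leq 2n$.

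Next, I would extract $\Delta_K$ by matching Ozsv\'ath--Szab\'o correction terms across all $4n$ Spin$^c$-structures. The rational surgery formula for $L$-space knots expresses each $d(S^3_K(4n),\mathfrak{s})$ in terms of the torsion coefficients $t_i(K)$ of the symmetrized Alexander polynomial; since the same formula applies to $T(2m+1,2)$, simultaneous matching of $d$-invariants forces $t_i(K)=t_i(T(2m+1,2))$ for every $i$, hence $\Delta_K=\Delta_{T(2m+1,2)}$. In particular $g(K)=m$, $\widehat{HFK}(K)\cong \widehat{HFK}(T(2m+1,2))$, and a sign check of absolute $d$-gradings (using that both $4n$-surgeries have the same overall shift) pins down $\varepsilon=+$.

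To finish, I would analyze the dual knot $K^*\subset Y:=S^3_K(4n)$. The exterior of $T(2m+1,2)$ is Seifert fibered over a disc with cone points of orders $2$ and $2m+1$, and the $4n$-filling creates the third exceptional fiber of the prism manifold $Y$, so the image of $T(2m+1,2)$ in $Y$ is a specific singular fiber of the Seifert fibration of $Y$. Using essential uniqueness of the Seifert fibration on $P(n,m)$ and the knot complement theorem, it suffices to show $K^*$ is isotopic to this same singular fiber, equivalently that $E(K)$ is Seifert fibered with the same invariants as $E(T(2m+1,2))$. The explicit bound from Theorem~\ref{prism and det} constrains the Seifert invariants of $Y$ and hence the possible positions of $K^*$; combined with Ni's fibredness theorem and an analysis of the JSJ decomposition of $E(K)$ forced by the $\widehat{HFK}$-match with a torus knot, one rules out hyperbolic or toroidal JSJ pieces and concludes $K=T(2m+1,2)$.

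The principal obstacle is this last geometric step: knot Floer homology matching alone is not known to detect torus knots, so upgrading the Floer-theoretic identification to an actual isotopy requires genuine geometric input. The hard work lies in showing that the exterior of the dual knot in the prism manifold is Seifert fibered in the same way as the torus knot exterior, and it is here that the new bound in Theorem~\ref{prism and det} plays a decisive role by limiting the Seifert invariants of the target prism manifold.
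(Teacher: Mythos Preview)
Your proposal has genuine gaps at several stages. The mirroring trick to dispose of $\varepsilon=-$ does not work: passing from $K$ to $-K$ negates the surgery slope as well, so you cannot reduce to an $\varepsilon=+$ statement with the \emph{same} slope $4n$. The paper proves $\varepsilon=+$ directly by evaluating correction terms at the spin Spin$^c$ structures $\{0,2n\}$ and observing that $\varepsilon=-$ would make $t_{2n}(K)$ non-integral. Likewise, your claim that matching all $d$-invariants forces $t_i(K)=t_i(T(2m+1,2))$ ignores the unknown affine isomorphism $\phi$ on $\mathbb Z/4n\mathbb Z$ induced by the homeomorphism. The paper does not extract $\Delta_K$ this way: from correction terms it only obtains $t_n(K)=0$, hence $g(K)\le n$, and then invokes the Ichihara--Teragaito bound $4n\le 4g(K)$ (available because the prism manifold contains a Klein bottle) to force $g(K)=n=m$; the equality $\Delta_K=\Delta_{T(2m+1,2)}$ then comes from the $L$-space form of $\Delta_K$ together with $\det(K)=2m+1$.

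Most seriously, your geometric endgame is not a proof: you acknowledge that a $\widehat{HFK}$-match does not detect torus knots, and the proposed ``JSJ analysis forced by the $\widehat{HFK}$-match'' is not a known argument. Theorem~\ref{prism and det} plays no role here---it is a byproduct of Section~\ref{sect:main thm-part II}, not an input to Section~\ref{sect:DiSurg}. The paper's route is entirely different: the once-punctured Klein bottle $P$ in $M$ (guaranteed at the borderline slope $4m=4g(K)$) gives a genus-$2$ Heegaard splitting of $M$, from which one shows the dual knot $K'$ is $1$-bridge in the prism manifold in a specific way. Passing to the double branched cover of $S^3$ over $K$ turns this into a $1$-bridge knot $\widetilde K'$ in a \emph{lens space}; the Hedden--Rasmussen theorem then forces $\widetilde K'$ to be Floer simple, hence a \emph{simple knot}. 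The same holds for the lift $\widetilde T'$ of the dual of $T(2m+1,2)$, and a correction-term computation shows $[\widetilde K']=\pm[\widetilde T']$ in $H_1$. Since simple knots are determined by their homology class, $\widetilde K'$ and $\widetilde T'$ would be isotopic---contradicting that one complement is hyperbolic and the other Seifert fibered. None of this machinery (Klein bottle, double branched cover, simple knots in lens spaces) appears in your sketch.
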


In the terminology of \cite{NiZhang}, the above theorem implies that $4m$ and $4m+4$ are characterizing slopes for $T(2m+1,2)$, that is, whenever $S^3_K(4n)\cong S^3_{T(2m+1,2)}(4n)$ for $n=m$ or $m+1$,
then $K=T(2m+1,2)$.

Combining Theorem \ref{thm:DiSurg} with  Known Facts \ref{known2} (5) and Known Facts \ref{known1} (4), we 
have 

\begin{cor}Any $D$-type finite surgery slope of a hyperbolic knot in $S^3$  is an integer larger than or equal to
$28$. 
\end{cor}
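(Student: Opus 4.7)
The plan is to combine the three cited results directly, since the corollary is essentially a packaging statement. First I would use Known Facts \ref{known1}(4) to reduce the problem: any $D$-type finite surgery slope $p$ on a hyperbolic knot in $S^3$ is automatically an integer, so it only remains to establish the lower bound $p \geq 28$.

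Next I would argue by contradiction: suppose a hyperbolic knot $K \subset S^3$ admits a $D$-type finite surgery with integer slope $p \leq 27$. Since $p \leq 27 < 32$, Known Facts \ref{known2}(5) applies and forces $p$ to appear in Table~\ref{table:D}, accompanied by a sample knot $K_0$ such that $S^3_K(p) \cong S^3_{K_0}(p)$ as $D$-type spherical space forms. Inspection of Table~\ref{table:D} shows that for every slope $p \leq 27$ listed there, the associated sample knot $K_0$ is a torus knot $T(2m+1,2)$ with $p = 4m$ or $p = 4m+4$. Writing the resulting prism manifold as $S^3_{T(2m+1,2)}(4n)$ with $n \in \{m, m+1\}$, Theorem~\ref{thm:DiSurg} then yields $K = T(2m+1,2)$, contradicting the hypothesis that $K$ is hyperbolic.

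The only real step beyond bookkeeping is the combinatorial input in the middle paragraph: one must verify by inspection of Table~\ref{table:D} that for $p \leq 27$ no non-torus-knot sample appears. This is where the characterizing-slope strength of Theorem~\ref{thm:DiSurg} gets fully used, since it eliminates precisely those table entries whose sample knots are torus knots. The bound $28$ is then the first integer at which a non-torus-knot sample could potentially appear in Doig's list, which is why the corollary stops there and does not attempt to push further without additional input.
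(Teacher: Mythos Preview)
Your proposal is correct and matches the paper's approach exactly: the paper does not give a detailed proof but simply states that the corollary follows from combining Theorem~\ref{thm:DiSurg} with Known Facts~\ref{known2}(5) and Known Facts~\ref{known1}(4), and you have unpacked this combination precisely as intended. The key observation you make explicit---that every sample knot in Table~\ref{table:D} with $p\le 24$ is a torus knot $T(2m+1,2)$ with $p\in\{4m,4m+4\}$, so that Theorem~\ref{thm:DiSurg} applies---is exactly the content behind the paper's one-line justification.
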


The bound $28$ can be realized as a $D$-type slope on two hyperbolic knots in $S^3$
(see Table~\ref{table:D} in Section~\ref{sect:TOI type surgeries}).

The results described above suggest the following updated
conjectural picture concerning  finite surgeries on hyperbolic knots in $S^3$.

\begin{conj}Let $K$ be a hyperbolic knot in $S^3$.\newline
(1) {\rm (The Berge conjecture) } If $K$ admits a nontrivial cyclic surgery, then $K$ is a primitive/primitive
knot as defined in \cite{Berge} (i.e. a Berge knot).
\newline
(2) {\rm (Raised in \cite{Gu})} If $K$ admits a $T$-type or $O$-type or $I$-type finite surgery, then $K$ is one of the twenty-three hyperbolic
sample knots listed in Tables 1--3.
\newline
(3) $K$ does not have any half-integral finite surgery slope.
\newline
(4) If $K$ admits two non-cyclic finite surgeries, then $K$ is either  $P(-2,3,9)$ or
 $-K(1,1,0)$.
\newline
(5) If $K$ admits two  finite surgeries at distance two, then $K$ is $P(-2,3,7)$.
\newline
(6) If $K$ admits a non-cyclic finite surgery, then $K$ is a primitive/Seifert-fibered knot.
\newline
(7) {\rm (Improved from \cite[Conjecture~12]{D2})} If the prism manifold $P(n,m)$ can be obtained by surgery on $K$, then $n<2|m|-2$.
\end{conj}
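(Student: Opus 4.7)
Most parts of this conjecture refine the theorems just established; part (1) is the famous Berge conjecture, so a complete proof in a single paper is not to be expected. My plan is to identify which parts are closest to the machinery developed here and outline an attack on each.

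Parts (3), (4) and (5) look the most tractable, because Theorem~\ref{17/2and23/2} and Theorem~\ref{main thm} already reduce each of them to a finite list. For part (3), I would work through the remaining eight half-integer candidates in Table~\ref{table:Half} by imitating the treatment of $17/2$ and $23/2$: use the sample knot to pin down the knot Floer homology, then extract a contradiction from $d$-invariant inequalities on the resulting spherical space form, Casson-type invariants, and the finite-group structure of $\pi_1$. For parts (4) and (5), the main theorem leaves a list of slope-pairs each labelled by a sample knot, and one must show that only $P(-2,3,9)$, $-K(1,1,0)$ and $P(-2,3,7)$ are realized by hyperbolic knots; Theorem~\ref{thm:DiSurg} together with a case-by-case search for non-hyperbolic sample knots should eliminate the cable-type entries $[a,b;c,d]$, while the remaining hyperbolic sample cases will require additional input, most naturally from $SL_2(\mathbb{C})$-character variety techniques in the spirit of the classical results listed in Known Facts~\ref{known1}.

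For part (7), which sharpens Theorem~\ref{prism and det} from $n<4|m|$ to $n<2|m|-2$, I would push the Doig-style changemaker argument further, using the alternating $\pm 1$ structure of the Alexander polynomial recorded in Known Facts~\ref{known2}(2). The $d$-invariants of $P(n,m)$ are explicit from the Seifert data $(-1;(2,1),(2,1),(n,m))$, and matching them term-by-term against those coming from $(\pm 4m)$-surgery on an $L$-space knot whose genus satisfies $|4m|\geq 2g(K)-1$ should constrain the admissible lattice embeddings tightly enough to roughly halve the present bound. The main obstacle lies in the borderline cases near $n=2|m|$, where the $d$-invariant matching alone is likely too weak; there I would expect to need extra input from the Casson-Walker-Lescop invariant or from Seifert-structure obstructions on the surgery. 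Parts (1), (2) and (6) remain out of reach by current techniques and would require strengthening the Floer-theoretic rigidity of Known Facts~\ref{known2} from a coincidence of knot Floer homology to a genuine geometric identification of the knots.
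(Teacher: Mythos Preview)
The statement you are addressing is a \emph{Conjecture}, not a theorem: the paper presents it explicitly as an open problem motivated by the results just proved, and offers no proof whatsoever. There is therefore nothing in the paper to compare your proposal against.

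What you have written is not a proof but a speculative research programme, and you seem partly aware of this (``a complete proof in a single paper is not to be expected''). That is the correct reading of the situation, but it means the task of producing a proof is simply inapplicable here. Your outline for parts (3)--(5) and (7) is a reasonable sketch of how one might try to attack those questions, but none of those arguments is close to complete: for instance, eliminating the remaining eight half-integer slopes in part (3) cannot be done by ``imitating the treatment of $17/2$ and $23/2$'', since that argument depended on the sample knot being $T(5,2)$ and on specific low-genus exclusions that have no analogue for the higher cable sample knots; and for part (7) the changemaker/$d$-invariant matching you describe is not known to yield a bound anywhere near $2|m|-2$. Parts (1), (2) and (6) you yourself concede are out of reach. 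The honest summary is that the paper states this as a conjecture precisely because the authors do not know how to prove it, and neither does your proposal.
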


The proofs of the above theorems are mainly using $PSL_2(\c)$ representation techniques, the correction terms from Heegaard Floer homology and the Casson--Walker invariant
besides Known Facts~\ref{known1} and~\ref{known2}.
In Section~\ref{sect:TOI type surgeries} we give a bit detailed explanation
 about Known Facts~\ref{known2} (3) (4) (5),  which will be convenient to be applied in later sections.
In Section~\ref{sect:CS norm}, we recall briefly some machinery
of using  $PSL_2(\c)$-representations for studying finite surgeries,
specialized to the case for hyperbolic knots in $S^3$.
We prove Theorem~\ref{17/2and23/2} in Section~\ref{sect:17/2and23/2}
 where an outline of proof will be indicated at the beginning.
 The method of proof is mainly $PSL_2(\c)$-representation techniques,
 combined with Known Facts~\ref{known2} (4)  as well as various other results.
 We then present the proof of Theorem~\ref{main thm},  which we split into two
 parts, corresponding to the two cases whether
  a half-integer finite surgery slope is involved or not.
 Part I of the proof, given in Section~\ref{sect:main thm-part I},   is mainly using $PSL_2(\c)$-representation techniques
combined with Known Facts~\ref{known2} as well as various other results,
and part II of the proof, given in  Section~\ref{sect:main thm-part II}, is mainly using the Casson-Walker invariant combined with Known Facts~\ref{known2}.
 Section~\ref{sect:main thm-part II} also contains the proof of Theorem~\ref{prism and det}.
In Section~\ref{sect:DiSurg}, we prove Theorem~\ref{thm:DiSurg} applying Heegaard Floer homology and some
topological arguments.

\vspace{5pt}\noindent{\bf Acknowledgements.}\quad  The first author was
partially supported by NSF grant numbers DMS-1103976, DMS-1252992, and an Alfred P. Sloan Research Fellowship.

%%%%%
%%%%%
%%%%%
%%%%%
%%%%%

\section{Tables of finite surgeries}\label{sect:TOI type surgeries}

Given a rational homology sphere $Y$ and a Spin$^c$ structure $\mathfrak s\in\mathrm{Spin}^c(Y)$, Ozsv\'ath and Szab\'o defined a rational number $d(Y,\mathfrak s)$ called the {\it correction term} \cite{OSzAbGr}.
Recall that a rational homology sphere $Y$ is an {\it L-space} if its Heegaard Floer homology $\widehat{HF}(Y,\mathfrak s)$ is isomorphic to $\mathbb Z$ for each $\mathfrak s\in\mathrm{Spin}^c(Y)$. Lens spaces, and more generally spherical $3$--manifolds are L-spaces. The correction terms are the only informative Heegaard Floer invariants for L-spaces.

Let $L(p,q)$ be the lens space obtained by $\frac{p}q$--surgery on the unknot in $S^3$, and we fix a particular identification $\mathrm{Spin}^c(L(p,q))\cong \mathbb Z/p\mathbb Z$.
The correction terms for lens spaces can be computed inductively as in \cite{OSzAbGr}:
\begin{eqnarray}
d(S^3,0)&=&0,\nonumber\\
d(L(p,q),i)&=&-\frac14+\frac{(2i+1-p-q)^2}{4pq}-d(L(q,r),j),\label{eq:CorrRecurs}
\end{eqnarray}
where $0\le i<p+q$,
$r$ and $j$ are the reductions modulo $q$ of $p$ and $i$, respectively.

For a knot $K$ in $S^3$, suppose its Alexander polynomial normalized by the conditions $\Delta_K(t)=\Delta_K(t^{-1})$ and $\Delta_K(1)=1$ is
$$\Delta_K(t)=\sum_ia_it^i.$$
Define a sequence of integers
$$t_i=\sum_{j=1}^{\infty}ja_{i+j},\quad i\ge0.$$
If $K$ admits a L-space surgery, then one can prove \cite{OSzRatSurg,RasThesis}
\begin{equation}\label{eq:tsProperties}
t_i\ge0,\quad t_i\ge t_{i+1}\ge t_i-1,\quad t_{g(K)}=0,
\end{equation}
and  the correction terms of $S^3_{K}(p/q)$ can be computed in terms of $t_i$'s by the formula
\[%\begin{equation}\label{eq:CorrSurg}
d(S^3_{K}(p/q),i)=d(L(p,q),i)-2t_{\min\{\lfloor\frac{i}q\rfloor,\lfloor\frac{p+q-i-1}q\rfloor\}}.
\]%\end{equation}

The above results give us a necessary condition for a spherical space form $Y$ with $H_1(Y)\cong\mathbb Z/p\mathbb Z$ to be the $p/q$--surgery on any knot $K\subset S^3$. That is
\begin{condition}\label{cond:Corr}
There exist a sequence of integers $\{t_i\}_{i\ge0}$ satisfying
\[
t_i\ge0,\quad t_i\ge t_{i+1}\ge t_i-1,\quad t_{i}=0 \text{ when }i\gg0,
\]
and
a symmetric affine isomorphism $\phi\co \mathbb Z/p\mathbb Z\to \mathbb Z/p\mathbb Z$ such that
\[
d(Y,\phi(i))=d(L(p,q),i)-2t_{\min\{\lfloor\frac{i}q\rfloor,\lfloor\frac{p+q-i-1}q\rfloor\}}.
\]
\end{condition}
Here $\phi$ is symmetric means that $\phi$ commutes with the conjugation of Spin$^c$ structures once we identify $\mathbb Z/p\mathbb Z$ with the corresponding sets of Spin$^c$ structures.

Condition~\ref{cond:Corr} is easy to check using a simple computer program.
When Condition~\ref{cond:Corr} is satisfied, we can recover the Alexander polynomial of the possible knot $K$ admitting the surgery to $Y$.  It is surprising that
Condition~\ref{cond:Corr} is also sufficient in all known cases. For example, Ozsv\'ath and Szab\'o \cite[Proposition~1.13]{OSzLspace} confirmed that a lens space $L(p,q)$ with $p\le1500$ can be obtained by $p$--surgery on a knot in $S^3$ if and only if Condition~\ref{cond:Corr} holds. Doig \cite{D1} proved similar results for prism spaces with $|H_1|\le 32$, and
Gu \cite{Gu} and Li--Ni \cite{LiNi} proved similar results for $T$-type, $O$-type, $I$-type spherical space forms.

Here we list the $T$-type, $O$-type, $I$-type and some  $D$-type finite surgery slopes
and the sample knots mentioned in Known Facts~\ref{known2} (3) (4) (5),
which are reproduced from \cite{Gu,LiNi,D1} for
the reader's convenience.
For each of these  finite surgeries, we actually   list   four to six  relevant data: $p$ (or $p/2$), $Y$, $K_0$, $g$, $\det(K)$, $\Delta_K''(1)$,
which are useful in this and  later sections. Here $p>0$
 (or $p/2>0$) is the finite surgery slope, $Y$ is the resulting manifold, $K_0$ is a sample knot which admits the finite surgery, $g$,
  $\det(K)$ and $\Delta_K(t)$ are respectively the Seifert genus, the determinant and
  the normalized Alexander polynomial
  for any knot $K\subset S^3$ which admits the finite surgery.

It is not hard to see that every $T$-, $O$- or $I$-type spherical space form, up to orientation reversing, can be obtained by Dehn filling on $\mathbb T$, the exterior of the right-hand trefoil knot in $S^3$. Thus we represent the corresponding surgery manifold  $Y$ by Dehn filling on $\mathbb T$ and specify the orientation.

Now we explain the notations we use for the sample knots in these tables.
Let $-K$ be the mirror image of $K$.
\begin{itemize}
\item Many of the  knots in the tables are torus knots or iterated torus knots. Following \cite{BH}, we use $[p,q;r,s]$ to denote the $(p,q)$--cable of the torus knot $T(r,s)$.
\item There are two hyperbolic pretzel knots in the tables: $P(-2,3,7)$ and $P(-2,3,9)$.
\item Following \cite{MM}, let $K(p,q,r,n)$ be the twist torus knot obtained by applying $n$ full twists to $r$ parallel strings in $T(p,q)$, where $p,q$ are coprime integers, $q>|p|\ge2$, $0\le r\le p+q$, $n\in\mathbb Z$. It is proved in \cite{MM}  that
the $pq+n(p+q)^2$--surgery on $K(p,q,p+q,n)$ yields a Seifert fibered manifold with base orbifold $S^2(|p|,q,|n|)$.
\item Let $B(p,q;a)$ be the Berge knot \cite{Berge} whose dual is the simple knot \cite{RasBerge} in the homology class $a$ in $L(p,q)$.
\item Three knots $K_2^{\#},K_3^*$, $K_3$  are from \cite[Section 10]{BZ1}.
\item One knot $K(1,1,0)$ is from \cite[Section~4]{EM} which is also the knot $K_1$ given in \cite[Proposition 18]{BH}.
\item Three knots are primitive/Seifert  knots from \cite{BergeKang}, we will use the notation there, starting with ``P/SF$_d$ KIST''.
\end{itemize}

\begin{table}
\centering
\caption{Integral $T$-type surgeries}\label{table:T}
\begin{tabular}{|c|c|c|c|c|c|}
\hline
$p$ &$Y$ &$K_0$ &$g$ &$\det(K)$ &$\Delta_K''(1)$ \\
\hline
3 &$\mathbb T(3)$ &$T(3,2)$ &1 &3 &2 \\
\hline
9 &$\mathbb T(9)$ &$T(3,2)$ &1 &3 &2 \\
\hline
21 &$\mathbb T(21/4)$ &$[11, 2; 3, 2]$ &7 &11 &38 \\
\hline
27 &$\mathbb T(27/4)$ &$[13, 2; 3, 2]$ &8 &13 &50 \\
\hline
51 &$-\mathbb T(51/8)$ &$K_2^{\#}$  &18 &1 &200\\
\hline
69 &$-\mathbb T(69/11)$ &$-K(2, 3, 5, -3)$ &25 &1 &368\\
\hline
81 &$\mathbb T(81/13)$ &$K(2, 3, 5,  3)$ &31 &1 &536\\
\hline
93 &$\mathbb T(93/16)$ &$[23, 4; 3, 2]$ &37 &23 &692\\
\hline
99 &$\mathbb T(99/16)$ &$[25, 4; 3, 2]$  &40 &25 &812\\
\hline
\end{tabular}
\end{table}

\begin{table}
\centering
\caption{Integral $O$-type surgeries}\label{table:O}
\begin{tabular}{|c|c|c|c|c|c|c|c|}
\hline
$p$ &$Y$ &$K_0$ &$g$ &$\det(K)$ &$\Delta_K''(1)$\\
\hline
2 &$\mathbb T(2)$ &$T(3,2)$ &1 &3 &2\\
\hline
10 &$\mathbb T(10)$ &$T(3,2)$ &1  &3 &2\\
\hline
10 &$-\mathbb T(10)$ &$T(4,3)$ &3  &3 &10\\
\hline
14 &$-\mathbb T(14/3)$ &$T(4,3)$ &3 &3 &10 \\
\hline
22 &$\mathbb T(22/3)$ &$P(-2,3,9)$ &6 &3 &34 \\
\hline
38 &$\mathbb T(38/7)$ &$B(39, 16; 16)$ &13 &3 &114 \\
\hline
46 &$-\mathbb T(46/7)$ &$B(45, 19; 8)$ &16 &3 &162 \\
\hline
50 &$\mathbb T(50/9)$ &$[17, 3; 3, 2]$ &19 &3 &210 \\
\hline
58 &$\mathbb T(58/9)$ &$[19,3;3,2]$  &21 &3 &258 \\
\hline
62 &$-\mathbb T(62/11)$ &P/SF$_d$ KIST III $(-5,-3,-2,-1,1)$ &23 &3 &306 \\
\hline
70 &$\mathbb T(70/11)$ &$B(71, 27; 11)$ &27 &3 &402\\
\hline
86 &$\mathbb T(86/15)$ &$-K(3, 4, 7, -2)$ &33 &3 &586\\
\hline
94 &$-\mathbb T(94/15)$ &$-K(2, 3, 5, -4)$ &35 &3 &690\\
\hline
106 &$\mathbb T(106/17)$ &$K(2, 3, 5, 4)$ &41 &3 &914\\
\hline
106 &$-\mathbb T(106/17)$ &$[35,3;4,3]$ &43 &3 &906\\
\hline
110 &$-\mathbb T(110/19)$ &$K(3, 4, 7, 2)$ &45 &3 &1002\\
\hline
110 &$-\mathbb T(110/19)$ &$[37,3;4,3]$ &45 &3 &1002\\
\hline
146 &$\mathbb T(146/25)$ &$[29,5;3,2]$ &61 &3 &1730\\
\hline
154 &$\mathbb T(154/25)$ &$[31,5;3,2]$ &65 &3 &1970\\
\hline
\end{tabular}
\end{table}

\begin{table}
\centering
\caption{Integral $I$-type surgeries}\label{table:I}
\begin{tabular}[t]{|c|c|c|c|c|c|}
\hline
$p$ &$Y$ &$K_0$ &$g$ &$\det(K)$ &$\Delta_K''(1)$  \\
\hline
1 &$\mathbb T(1)$ &$T(3,2)$ &1 &3 & 2 \\
\hline
7 &$\mathbb T(7/2)$ &$T(5,2)$ &2 &5 &6 \\
\hline
11 &$\mathbb T(11)$ &$T(3,2)$ &1 &3 & 2 \\
\hline
13 &$-\mathbb T(13/3)$ &$T(5,2)$ &2 &5 &6\\
\hline
13 &$\mathbb T(13/3)$ &$T(5,3)$ &4 &1 &16 \\
\hline
17 &$\mathbb T(17/2)$ &$T(5,3)$ &4 &1 &16\\
\hline
17 &$-\mathbb T(17/2)$ &$P(-2,3,7)$ &5 &1 &24 \\
\hline
19 &$\mathbb T(19/4)$ &$[9,2;3,2]$ &6  &9 &28\\
\hline
23 &$\mathbb T(23/3)$ &$P(-2,3,9)$ &6 &3 &34 \\
\hline
29 &$\mathbb T(29/4)$ &$[15,2;3,2]$ &9 &15 &64 \\
\hline
29 &$-\mathbb T(29/4)$ &$-K(1,1,0)$ &9 &11 &62 \\
\hline
37 &$-\mathbb T(37/7)$ &$-K(-2,5,3,-3)$ &11 &1 &96 \\
\hline
37 &$\mathbb T(37/7)$ &$[19,2;5,2]$ &13 &19 &114 \\
\hline
43 &$-\mathbb T(43/8)$ &$[21,2;5,2]$ &14 &21 &134 \\
\hline
47 &$\mathbb T(47/7)$ &$K_3^*$  &16 &1 &168 \\
\hline
49 &$\mathbb T(49/9)$ &$[16,3;3,2]$ &18 &9 &188 \\
\hline
59 &$\mathbb T(59/9)$ &$[20,3;3,2]$ &22 &9 &284\\
\hline
83 &$-\mathbb T(83/13)$ &P/SF$_d$ KIST V$(1,-2,-1,2,2)$  &32 &1 &552\\
\hline
91 &$\mathbb T(91/16)$ &$[23,4;3,2]$ &37 &23 &692\\
\hline
101 &$\mathbb T(101/16)$ &$[25,4;3,2]$ &40 &25 &812\\
\hline
113 &$-\mathbb T(113/18)$ &$-K(3,5,8,-2)$ &45 &1 &1024\\
\hline
113 &$\mathbb T(113/18)$ &$-$P/SF$_d$ KIST V$(-3,-2,-1,2,2)$ &46 &1 &1048\\
\hline
119 &$-\mathbb T(119/19)$ &$-K(2,3,5,-5)$  &45 &1 &1112\\
\hline
131 &$\mathbb T(131/21)$ &$K(2,3,5,5)$ &51 &1 &1392\\
\hline
133 &$\mathbb T(133/23)$ &$[44,3;5,3]$ &55 &3 &1434\\
\hline
137 &$-\mathbb T(137/22)$ &$-K(2,5,7,-3)$ &55 &3 &1506\\
\hline
137 &$\mathbb T(137/22)$ &$[46,3;5,3]$ &57 &3  &1554\\
\hline
143 &$\mathbb T(143/23)$ &$K(3,5,8,2)$ &60 &1 &1696\\
\hline
157 &$\mathbb T(157/27)$ &$[39,4;5,2]$ &65 &39 &1996\\
\hline
157 &$-\mathbb T(157/27)$ &$K(2,5,7,3)$  &65 &3 &2034\\
\hline
163 &$-\mathbb T(163/28)$ &$[41,4;5,2]$ &68 &41 &2196\\
\hline
211 &$\mathbb T(211/36)$ &$[35,6;3,2]$ &91 &35 &3642\\
\hline
221 &$\mathbb T(221/36)$ &$[37,6;3,2]$ &96 &37 &4062\\
\hline
\end{tabular}
\end{table}

\begin{table}
\centering
\caption{Half-integral surgeries}\label{table:Half}
\begin{tabular}{|c|c|c|c||c|c|c|c|}
\hline
$p/2$ & $Y$ & $K_0$ &$g$ &$p/2$ & $Y$ & $K$ &$g$\\
\hline
${17}/2$ &$-\mathbb T({17}/2)$ &$T(5,2)$ &2 & ${53}/2$ &$\mathbb T({53}/8)$ &$[13,2;3,2]$ &8\\
\hline
 ${23}/2$ &$\mathbb T({23}/3)$ &$T(5,2)$ &2 &${77}/2$ &$-\mathbb T({77}/{12})$ &$[19,2;5,2]$ &13\\
\hline
 ${43}/2$ &$\mathbb T({43}/8)$ &$[11,2;3,2]$ &7 &${83}/2$ &$\mathbb T({83}/{13})$ &$[21,2;5,2]$ &14\\
\hline
 ${45}/2$ &$\mathbb T({45}/8)$ &$[11,2;3,2]$ &7 &${103}/2$ &$\mathbb T({103}/{18})$ &$[17,3;3,2]$ &19\\
\hline
 ${51}/2$ &$\mathbb T({51}/8)$ &$[13,2;3,2]$ &8 &${113}/2$ &$\mathbb T({113}/{18})$ &$[19,3;3,2]$ &21\\
\hline
\end{tabular}
\end{table}

\begin{table}
\centering
\caption{Integral $D$-type $p$-surgeries with $p\leq 32$}\label{table:D}
\begin{tabular}{|c|c|c|c|c|}
\hline
$p$ &$K_0$ &$g$ &$\det(K)$ &$\Delta_K''(1)$ \\
\hline
4 &$T(3,2)$ &1 &3 &2 \\
\hline
8 &$T(3,2)$ &1 &3 &2 \\
\hline
8&$T(5,2)$ &2 &5 &6 \\
\hline
12&$T(5,2)$ &2 &5 &6 \\
\hline
12&$T(7,2)$ &3 &7 &12 \\
\hline
16 &$T(7,2)$ &3 &7 &12\\
\hline
16 &$T(9,2)$ &4 &9 &20\\
\hline
20 &$T(9,2)$ &4 &9 &20\\
\hline
20 &$T(11,2)$ &5 &11 &30\\
\hline
24 &$T(11,2)$ &5 &11 &30\\
\hline
24 &$T(13,2)$ &6 &13 &42\\
\hline
28 &$T(13,2)$ &6 &13 &42\\
\hline
28 &$-K(1,1,0)$ &9 &11 &62\\
\hline
28 &$K_3$ &8 &5 &54\\
\hline
28 &$T(15,2)$ &7 &15 &56\\
\hline
32&$T(15,2)$ &7 &15 &56\\
\hline
32&$T(17,2)$ &8 &17 &72\\\hline
\end{tabular}
\end{table}

\begin{rem}
In \cite{BH}, the authors enumerated all finite surgeries on iterated torus knots. However, one case was missed in their list: the $58$--surgery on $[19,3;3,2]$ yields the $O$-type manifold $\mathbb T(58/9)$. This mistake was inherited in \cite{Gu}, where the author found a knot on which the $58$--surgery yields $\mathbb T(58/9)$, but she thought the knot was hyperbolic  because
this case was not listed in \cite{BH}.
\end{rem}

In Table~\ref{table:Half} we omit the two half-integer finite surgeries on $T(3,2)$
as it was already known that only $T(3,2)$ can have such surgeries \cite{OSz3141}.

\begin{lem}\label{different A-poly}
In Tables~\ref{table:T}--\ref{table:D}, any two sample knots expressed in different notations
are different knots with different Alexander polynomials.
\end{lem}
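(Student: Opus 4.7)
The plan is to verify the lemma by a direct, finite case check, leveraging the fact that the tables record three numerical invariants of the Alexander polynomial: the Seifert genus $g$ (which for the L-space knots at hand equals $\deg\Delta_K(t)$), the determinant $\det(K)=|\Delta_K(-1)|$, and $\Delta_K''(1)$. Two rows disagreeing on any one of these three invariants therefore name knots with distinct Alexander polynomials, and hence distinct knots. So the proof reduces to enumerating pairs of rows whose sample knots lie in different notational families and confirming that at least one of the three invariants distinguishes them, using a finer polynomial comparison only when all three coincide.

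First, I would separate the sample knots into the notational families used: torus knots $T(a,b)$; iterated torus cables $[p,q;r,s]$; the pretzels $P(-2,3,7)$ and $P(-2,3,9)$; twist torus knots $K(p,q,p+q,n)$; Berge knots $B(p,q;a)$; the ad hoc knots $K_2^{\#}$, $K_3^*$, $K_3$ and $K(1,1,0)$; and the primitive/Seifert knots labelled P/SF$_d$ KIST. A cross-family sweep knocks out most pairs purely from the tabular data: every torus knot $T(a,b)$ listed has $g\leq 8$ and $\det\leq 17$, while every iterated torus cable $[p,q;r,s]$ listed has $g\geq 6$ and $\det=|p|\cdot\det(T(r,s))\geq 9$, so the two families almost never overlap in their triples and when they do can be separated by $\Delta_K''(1)$; the pretzel $P(-2,3,7)$ carries the triple $(5,1,24)$ which appears nowhere else, and $P(-2,3,9)$ carries $(6,3,34)$ which appears nowhere else; the Berge knots $B(p,q;a)$ all have $\det=3$ but differ in $g$ from the other $\det=3$ entries; the ad hoc knots $K_2^{\#}$, $K_3^*$, $K_3$, $K(1,1,0)$ carry $\det\in\{1,5,11\}$ with genera that isolate them; and the three primitive/Seifert entries likewise have isolated triples.

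Second, for those residual pairs in which $(g,\det(K),\Delta_K''(1))$ coincide across families I would write down the full Alexander polynomial on each side and compare an intermediate coefficient. Cables are handled by the product formula $\Delta_{[p,q;r,s]}(t)=\Delta_{T(p,q)}(t)\,\Delta_{T(r,s)}(t^q)$; twist torus knots $K(p,q,p+q,n)$ are handled by the explicit formula given in \cite{MM}; Berge knots and primitive/Seifert knots are handled by reconstructing the sequence $\{t_i\}$ via Condition~\ref{cond:Corr} applied to the prescribed L-space surgery and then inverting the relation $t_i=\sum_{j\geq 1} j\,a_{i+j}$ to recover $\Delta_K(t)$; and the ad hoc knots $K_2^{\#}$, $K_3^*$, $K_3$, $K(1,1,0)$ have polynomials already recorded in the cited references. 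A coefficient-by-coefficient comparison then separates each residual pair.

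The principal obstacle is a conceptual one in the final step: one must rule out the possibility that some twist torus knot $K(p,q,p+q,n)$ appearing in the tables is literally isotopic to a cable $[p',q';r',s']$ yielding the same L-space surgery, because if so the two sample knots would agree and the lemma would require that row to be coalesced rather than distinguished. Verifying a genuine disagreement at an intermediate polynomial coefficient in each residual pair — using the explicit formulas above — closes this gap and completes the proof.
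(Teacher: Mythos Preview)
Your approach is essentially the paper's: distinguish pairs via the tabulated triple $(g,\det K,\Delta_K''(1))$ and fall back to an explicit coefficient comparison when all three agree. The paper, however, organizes the case check by genus rather than by notational family: since every knot with a finite surgery is fibred, equal Alexander polynomials force equal genus, so one only needs to inspect the handful of genera ($g=3,4,5,6,7,8,9,13,16,18,45,55,65$) at which two or more sample knots sit, and then separate those by $\det K$ or $\Delta_K''(1)$. This is both shorter and pins down the residual cases concretely; only at $g=45$ do two knots (the $O$-type pair $K(3,4,7,2)$ and $[37,3;4,3]$) share the full triple, and there a single low-degree coefficient of $\Delta_K$ separates them.

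Two small corrections to your sweep. First, your determinant formula for cables is off: from $\Delta_{[p,q;r,s]}(t)=\Delta_{T(p,q)}(t)\,\Delta_{T(r,s)}(t^{q})$ one gets $\det([p,q;r,s])=\det T(p,q)$ when $q$ is even and $\det T(p,q)\cdot\det T(r,s)$ when $q$ is odd, so e.g.\ $\det([17,3;3,2])=1\cdot 3=3$, not $|p|\cdot\det T(r,s)$; in particular the bound $\det\ge 9$ you assert for cables fails. Second, the ``conceptual obstacle'' you raise is not an obstacle at all: once you exhibit distinct Alexander polynomials for every pair carrying different notation, non-isotopy is automatic and the lemma is proved; there is no need to argue separately that twist torus knots are not cables.
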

\begin{proof}[{\bf Proof}]
By Known Facts~\ref{known2} (1), all the knots in $S^3$ with finite surgeries are fibred. So if two
sample knots in the tables have different genera, they must have different Alexander polynomials. Below we will compare the Alexander polynomials of sample knots with the same genera. Since $\det(K)=|\Delta_K(-1)|$, we often just compare $\det(K)$ and $\Delta_K''(1)$.

\begin{itemize}
\item$g=3$. There are two knots $T(4,3)$ and $T(7,2)$. They have different $\det(K)$.

\item$g=4$. There are two knots $T(5,3)$ and $T(9,2)$. They have different $\det(K)$.

\item$g=5$. There are two knots $P(-2,3,7)$ and $T(11,2)$. They have different $\det(K)$.

\item$g=6$. There are three knots $[9,2;3,2]$, $P(-2,3,9)$ and $T(13,2)$. They have different $\det(K)$.

\item$g=7$. There are two knots $[11,2;3,2]$ and $T(15,2)$. They have different $\det(K)$.

\item$g=8$. There are three knots $[13,2;3,2]$, $T(17,2)$ and $K_3$. They have different $\det(K)$.

\item$g=9$. There are two knots $[15,2;3,2]$ and $-K(1,1,0)$ with different $\det(K)$.

\item$g=13$. There are two knots $B(39,16;16)$ and $[19,2;5,2]$ with different $\det(K)$.

\item$g=16$. There are two knots $B(45,19;8)$ and $K_3^*$ with different $\det(K)$.

\item$g=18$. There are two knots $K_2^{\#}$ and $[16,3;3,2]$ with different $\det(K)$.

\item$g=45$. There are four knots. The two knots with $O$-type surgery can be distinguished from the two with $I$-type surgery via $\det(K)$. The two knots with $I$-type surgery have different $\Delta_K''(1)$. The two knots with $O$-type surgery have different
    Alexander polynomials as given below:
\begin{eqnarray*}
\Delta_{K(3, 4, 7, 2)}(t)&=&1-(t^2+t^{-2})+(t^{3}+t^{-3)}-(t^{5}+t^{-5})+(t^{7}+t^{-7})+\cdots,\\
\Delta_{[37, 3; 4, 3]}(t)&=&1-(t^2+t^{-2})+(t^{3}+t^{-3)}-(t^{5}+t^{-5})+(t^{6}+t^{-6})+\cdots.
\end{eqnarray*}

\item$g=55$. There are two knots $[44, 3; 5, 3]$ and $K(2, 5, 7, -3)$ with different $\Delta_K''(1)$.

\item$g=65$. There are three knots $[31, 5; 3, 2]$, $[39, 4; 5, 2]$ and $K(2, 5, 7, 3)$ with different $\Delta_K''(1)$.
\end{itemize}

This finishes the proof.
\end{proof}

\begin{rem}In Tables (1)-(5), any sample knot expressed in a notation different from that
of a torus knot or a cable over torus knot is a hyperbolic knot.
This is because that those torus knots and cables over torus knots
appeared in Tables (1)-(5) constitute the set of all non-hyperbolic knots in $S^3$ which admit $T$-type or $O$-type or $I$-type finite surgeries or
 $D$-type integer $p$-surgeries with $p\leq 32$ and by Lemma \ref{different A-poly} all other sample knots
are different from these non-hyperbolic ones.
\end{rem}

%%%%%
%%%%%
%%%%%
%%%%%
%%%%%

\section{$PSL_2(\c)$-character variety, Culler-Shalen norm or semi-norm,
and finite surgery}\label{sect:CS norm}

In this section we briefly review some machinery  and results from \cite{CGLS,BZ1,BZ2,BZ3} used in studying  cyclic and finite surgeries,
but specialized to the case of  knots in $S^3$. In fact for simplicity we shall
mainly  restrict our discussion
to the following  very special situation:  any hyperbolic knot in $S^3$ which is assumed to have
a half integer finite surgery. This is sufficient for our purpose in this paper.

 For a finitely generated group $\G$ we use $R(\G)$ to denote the
  $PSL_2(\c)$-representation variety of $\G$. (The term  variety used here means complex affine algebraic set). Let $\Phi: SL_2(\c)\ra PSL_2(\c)$ be the canonical quotient homomorphism.
  Given  an element  $\Upsilon$ in $PSL_2(\c)$,
 $\Phi^{-1}(\Upsilon)=\{A, -A\}$ for some $A\in SL_2(\c)$ and we
 often simply write $\Upsilon=\pm A$.
 In particular we may define
$tr^2(\Upsilon):=[trace(A)]^2$, which is obviously  well defined on
   $\Upsilon$.
   An element   $\Upsilon\in PSL_2(\c)$ is said to be parabolic
   if it is not the identity element $\pm I$ and satisfies $tr^2(\Upsilon)=4$.

A representation $\r\in R(\G)$ is said to be irreducible if
it is not conjugate to a representation whose image lies
in $$\{\pm \left(\begin{array}{cc}
a&b\\
0&a^{-1}\end{array}\right);\;\; a,b \in \c, a\ne 0\}.$$
 A representation $\r\in R(M)$ is said to be strictly irreducible if
it is irreducible and is not conjugate to a representation whose image lies
in $$\{\pm \left(\begin{array}{cc}
a&0\\
0&a^{-1}\end{array}\right), \pm \left(\begin{array}{cc}
0&b\\
-b^{-1}&0\end{array}\right);\;\; a,b \in \c, a\ne 0, b\ne 0\}.$$

 For a representation $\r\in R(\G)$, its character $\chi_\r$ is  the function
   $\chi_\r:\G\ra \c$ defined by
   $\chi_\r(\g)=tr^2(\r(\g))$ for each $\g\in \G$.
 Let $X(\G)=\{\chi_\r; \r\in R(\G)\}$ denote the set of characters of representations of $\G$.
 Then $X(\G)$ is also a complex affine algebraic set, usually referred as the $PSL_2(\c)$-character variety of
 $\G$.

A character $\chi_\r\in X(\G)$  is said to be irreducible or strictly irreducible or discrete faithful
or dihedral  if the  representation $\r$ has the corresponding property.

 Let $t:R(\G)\ra X(\G)$ denote the natural onto map defined by $t(\r)=\chi_\r$.
 Then $t$ is a regular map between the two algebraic sets.
 For an element $\g\in \G$, the function
$f_\g:X(\G)\ra \c$ is defined by $f_\g(\chi_\r)=\chi_\r(\g)-4=tr^2(\r(\g))-4$ for each $\chi_\r\in X(\G)$.
Each  $f_\g$ is a regular function on $X(\G)$.
Obviously $\chi_\r\in X(\G)$ is a zero point of $f_\g$ if and only if either
$\r(\g)=\pm I$ or $\r(\g)$ is a parabolic element. It is also evident that $f_\g$ is invariant when $\g$ is replaced by  a conjugate of $\g$ or by the inverse of $\g$.

If $\phi:\G\ra \G'$ is a surjective homomorphism between two finitely generated groups,
 it naturally induces an embedding
of $R(\G')$ into  $R(\G)$ and an embedding
of $X(\G')$ into $X(\G)$. So we may simply
consider $R(\G')$ and $X(\G')$ as subsets of $R(\G)$ and $X(\G)$ respectively,
and write $R(\G')\subset R(\G)$ and $X(\G')\subset X(\G)$.

 For a connected compact manifold $Y$, let $R(Y)$ and $X(Y)$ denote $R(\pi_1(Y))$
 and $X(\pi_1(Y))$ respectively.

Let $M$ be the  exterior of a  knot $K$ in $S^3$. A slope on $\p M$ is called a boundary slope
if there is an orientable properly embedded incompressible and boundary-incompressible
surface $F$ in $M$ whose boundary $\p F$ is a non-empty set of parallel
essential curves in $\p M$ of slope $\g$.
For a slope $\g$ on $\p M$, $M(\g)$ denotes the Dehn filling  of $M$ with slope $\g$.
Throughout we let $\m$ denote the meridian slope and $\l$ the canonical longitude slope on
$\p M$.  We use $\D(\g_1,\g_2)$ to denote the distance between two slopes $\g_1$ and $\g_2$ on $\p M$.
We call $K$ or $M$ hyperbolic if
the interior of $M$
supports a complete hyperbolic metric of finite volume.
Note that for any slope $\g$, there is a surjective homomorphism
from $\pi_1(M)$ to $\pi_1(M(\g))$, and thus $R(M(\g))\subset R(M)$ and $X(M(\g))\subset X(M)$.

For the exterior $M$ of a nontrivial knot in $S^3$, $H_1(\p M;\z)\cong \pi_1(\p M)$  can be considered as a subgroup of $\pi_1(M)$ which is well defined up to conjugation. Hence  the function $f_\g$ on $X(M)$ is well defined for each class $\g\in H_1(\p M;\z)$.
 As $\g$  is also invariant under the change of the orientation of $\g$, $f_\g$ is also well defined when $\g$ is a slope in $\p M$.
For convenience we will often  not make a distinction
 among  a primitive class of $H_1(\p M;\z)$, the corresponding
  element of $\pi_1(\p M)$ and the corresponding slope in $\p M$, that is, we shall often use these terms exchangeably under the same notation.

\begin{lem}\label{characters of finite groups}{\rm (\cite[Lemma 5.3]{BZ1})}
Let $Y$ be  a spherical space form.
\newline(1) If $Y$ is of $T$-type, then $X(Y)$ has exactly  one irreducible character $\chi_\r$ and the image of $\r$ is isomorphic to
the tetrahedral group $T_{12}=\{x, y; x^2 = y^3=(xy)^3 =1\}$.
\newline(2) If $Y$ is of $O$-type, then $X(Y)$  has exactly two
irreducible characters $\chi_{\r_0}$, $\chi_{\r_1}$
and one of $\r_0$ and $\r_1$ has its image isomorphic to
the octahedral group $O_{24}=\{x, y; x^2 = y^3=(xy)^4 =1\}$ (we name this character the
$O$-type character of $X(Y)$) and the other
has image isomorphic to the dihedral group $D_6=\{x,y; x^2=y^2=(xy)^3=1\}$ (we name this character
the $D$-type character of $X(Y)$).
\newline(3) If $Y$ is of $I$-type,  then $X(Y)$  has exactly two
irreducible characters $\chi_{\r_1}$, $\chi_{\r_2}$
and both $\r_1$ and $\r_2$ have image isomorphic to
the icosahedral group $I_{60}=\{x, y; x^2 = y^3=(xy)^5 =1\}$.
\end{lem}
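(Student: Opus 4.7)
The plan is to exploit the Seifert fibration on $Y$ to reduce the problem to classifying irreducible $PSL_2(\c)$-characters of the base-orbifold group, and then to enumerate these by combining the normal-subgroup structure of each polyhedral group with a $PSL_2(\c)$-normalizer calculation.

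First I would set up the reduction. The Seifert fibration $Y\to S^2(2,3,c)$, with $c=3,4,5$ in the three cases, gives a central extension
\[
1\lra Z\lra \pi_1(Y)\lra G\lra 1,
\]
where $Z=\langle h\rangle$ is generated by the regular fiber and $G=\pi_1^{\mathrm{orb}}(S^2(2,3,c))=\langle x,y\mid x^2=y^3=(xy)^c=1\rangle$ is exactly $T_{12}$, $O_{24}$, or $I_{60}$ in the three cases. For any irreducible $\r\in R(Y)$, Schur's lemma forces $\r(h)$ to be scalar in $SL_2(\c)$, hence $\pm I$, hence trivial in $PSL_2(\c)$; so $\r$ factors through $G$ and it suffices to compute the irreducible characters of $G$.

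Next I would enumerate the possible images. The image of an irreducible $PSL_2(\c)$-representation is a non-abelian finite subgroup of $PSL_2(\c)$, hence dihedral or one of $T_{12}$, $O_{24}$, $I_{60}$, and must be a quotient of $G$. Reading the normal-subgroup lattice, $A_4$ has only $A_4$ itself as a non-abelian quotient; $S_4$ has $S_4$ and $S_4/V_4\cong D_6$; and $A_5$ is simple, so only has $A_5$. This matches the three lists of claimed images.

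Finally I would count, for each admissible image $H$, the number of characters with image $H$ up to $PSL_2(\c)$-conjugacy; equivalently, the number of surjections $G\twoheadrightarrow H$ modulo the action of $N_{PSL_2(\c)}(H)/H$ by outer automorphisms. For $H=D_6, T_{12}, O_{24}$ this normalizer realizes every outer automorphism: $\mathrm{Out}(D_6)=\mathrm{Out}(S_4)=1$ is trivial, while $\mathrm{Out}(T_{12})=\z/2$ is realized since $T_{12}\triangleleft O_{24}$ and $O_{24}\subset PSL_2(\c)$. So each contributes exactly one character, producing one character in the $T$-case and two in the $O$-case (one $O$-type and one $D_6$-type). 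The main obstacle will be the $I$-type case: $\mathrm{Out}(A_5)=\z/2$, and if this outer automorphism were realized in $PSL_2(\c)$ it would produce a finite subgroup of order $120$ strictly containing $A_5$, which is impossible because every finite subgroup of $PSL_2(\c)$ is cyclic, dihedral, $T_{12}$, $O_{24}$, or $I_{60}$. Hence the two embeddings $A_5\hookrightarrow PSL_2(\c)$ differing by the outer automorphism lie in distinct $PSL_2(\c)$-conjugacy classes, yielding exactly two characters with image $I_{60}$, which one can distinguish via the two values $(3\pm\sqrt 5)/2$ taken by $tr^2$ on an order-$5$ element.
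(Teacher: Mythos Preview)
The paper does not prove this lemma; it simply quotes it from \cite[Lemma 5.3]{BZ1}. Your argument is a correct and self-contained proof along the standard lines: reduce to the orbifold group via the central fiber, list the admissible images as non-abelian quotients of $T_{12}$, $O_{24}$, $I_{60}$ that embed in $PSL_2(\c)$, and then count $PSL_2(\c)$-conjugacy classes of surjections using the normalizer.

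One small wording issue: the sentence ``the image of an irreducible $PSL_2(\c)$-representation is a non-abelian finite subgroup'' is not literally true in general, since the Klein four group $V_4$ sits irreducibly in $PSL_2(\c)$. This does not affect your proof, because $V_4$ is not a quotient of $A_4$, $S_4$, or $A_5$; but it would be cleaner to phrase that step as ``the abelian quotients of $G$ are cyclic, hence give reducible representations, so the image must be one of the non-abelian quotients.'' With that adjustment the argument is complete and matches what one finds in \cite{BZ1}.
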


Let $M$ be the exterior of a knot in $S^3$
and suppose that $\b$ is a $D$-type or $T$-type or $O$-type or $I$-type
finite surgery slope on $\p M$.
Let $\rho\in R(M(\b))\subset R(M)$ be an irreducible representation and  we require
 $\rho$ to have image $O_{24}$ when $\b$ is of $O$-type.
 Let $\phi$ denote the composition
$\pi_1(\partial M) \to \pi_1(M) \to \pi_1(M(\b)) \stackrel{
\rho}{\to} \ps$ and let $q = | \phi(\pi_1(\partial M))|$.
Then $q$ is uniquely associated to the finite surgery slope $\b$
and following \cite{BZ5}, we  say more specifically that  the finite surgery slope
$\b$ is of type $D(q)$, $T(q)$, $O(q)$ or $I(q)$ respectively.
\begin{rem}\label{qfactor}
One useful information that the number $q$ indicates, which we shall often apply, is that
for  any slope $\g$ on $\p M$ whose distance from $\b$ is divisible by  $q$, then the representation
$\r$ factors through $\pi_1(M(\g))$, i.e. $\r(\g)=\pm I$.
\end{rem}
The following lemma can be extracted from \cite{BZ5}
specializing to exteriors of hyperbolic knots in $S^3$.

\begin{lem} \label{q}{\rm (\cite{BZ5})}
Let $M$ be the exterior of a hyperbolic knot $K$ in $S^3$ and $\b$  a finite
non-cyclic surgery slope of $K$. \\
$(1)$ If $\b$ is $D$-type, it is actually $D(2)$-type and $\b$ is an integer divisible by $4$. \\
$(2)$ If $\b$ is $T$-type, it is actually $T(3)$-type and $\b$ is an
integer or half-integer whose meridian coordinate is an odd integer divisible by $3$. \\
$(3)$ If $\b$ is $I$-type, it is  $I(2)$-, $I(3)$-, or $I(5)$-type and $\b$ is an
integer or half-integer whose meridian coordinate is relatively prime to $30$.
\\
$(4)$ If $\b$ is $O$-type, it is  $O(2)$- or $O(4)$-type and $\b$ is an
even integer not divisible by  $4$.
\end{lem}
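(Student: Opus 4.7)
The plan is to prove each part via two independent steps: (a) pin down the integer $q$ by analyzing $\phi(\pi_1(\p M))$ inside the image of $\r$, and (b) establish the divisibility condition on the meridian coordinate by using the classification of spherical space forms with cyclic first homology.

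For (a), the crucial observation is that $\phi$ factors through $\pi_1(\p M)/\langle\b\rangle \cong \z$, so $\phi(\pi_1(\p M))$ is cyclic of order $q$. Two further constraints follow from $K$ being a knot in $S^3$: the canonical longitude $\l$ is null-homologous in $M$, so $\phi(\l) \in [\mathrm{Im}(\r), \mathrm{Im}(\r)]$; and the induced surjection $\z = H_1(M) \twoheadrightarrow \mathrm{Im}(\r)^{ab}$ forces the image of $\m$ to generate the abelianization (which must itself be cyclic, as a quotient of $\z$). Case analysis on the finite subgroups of $PSL_2(\c)$ then proceeds as follows. For $D$-type the image is a dihedral group $D_{2n}$, and the cyclic-abelianization requirement forces $n$ odd, whence $\phi(\m)$ is a reflection and the only rotation commuting with it is trivial, giving $\phi(\l)=1$ and $q=2$. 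For $T$-type (image $A_4$, abelianization $\z/3$), $\phi(\m)$ is a 3-cycle whose centralizer in $A_4$ is $\langle\phi(\m)\rangle$, which intersects the commutator $V_4$ trivially, so $\phi(\l)=1$ and $q=3$. For $O$-type (image $S_4$, abelianization $\z/2$), $\phi(\m) \notin A_4$ is a transposition or a $4$-cycle, and cyclicity of $\langle\phi(\m),\phi(\l)\rangle$ gives $q \in \{2, 4\}$. For $I$-type (image $A_5$, trivial abelianization), $\phi(\m) \neq 1$ has order $2$, $3$, or $5$; centralizer analysis in $A_5$ forces $\phi(\l) \in \langle\phi(\m)\rangle$ and $q \in \{2, 3, 5\}$.

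For (b), combine $H_1(M(\b)) \cong \z/p$ (where $p$ is the meridian coordinate of $\b = p/q_\b$) with the standard classification of spherical space forms. A direct computation from Seifert invariants yields $|H_1(P(n,m))| = 4|m|$, so $D$-type forces $p \equiv 0 \pmod 4$. The analogous classification gives, for $T$-, $O$-, and $I$-type manifolds with cyclic $H_1$, that $p$ equals respectively $3r$ with $r$ odd coprime to $3$, $2r$ with $r$ odd coprime to $3$, and $r$ coprime to $30$. Combining with Known Facts~\ref{known1}(2),(4),(5), which assert that any finite surgery slope is integer or half-integer and that $D$- and $O$-type slopes are always integer, the claimed divisibility follows: for $D$- and $O$-types the half-integer case is ruled out by parity, so $\b$ itself is the claimed divisibility, while for $T$- and $I$-types the meridian coordinate $p$ inherits the divisibility whether $q_\b = 1$ or $q_\b = 2$.

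The main obstacle is the $O$-type case analysis for $q$, where one must carefully exclude a Klein 4-group as $\phi(\pi_1(\p M))$ in the subcase when $\phi(\m)$ is a transposition: the centralizer of a transposition in $S_4$ is the Klein 4-group $\{1, (ij), (kl), (ij)(kl)\}$ whose intersection with $A_4$ is $\{1, (ij)(kl)\}$, and the potential value $\phi(\l) = (ij)(kl) \ne 1$ would produce a non-cyclic peripheral image, contradicting the cyclicity of $\phi(\pi_1(\p M))$; ruling this out pins down $q = 2$ in that subcase.
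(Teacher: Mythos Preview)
The paper does not prove this lemma; it is stated as an extraction from \cite{BZ5}, so there is no in-paper proof to compare against directly. Your argument is correct in substance and follows the standard line one finds in \cite{BZ1,BZ5}: the cyclicity of $\phi(\pi_1(\p M))$ (as a quotient of $\z^2/\langle\b\rangle\cong\z$), together with $\phi(\l)$ lying in the commutator subgroup and $\phi(\m)$ surjecting onto the abelianization of the image group, pins down the order $q$ via centralizer computations in $D_{2n}$, $A_4$, $S_4$, $A_5$; and the divisibility constraints on the meridian coordinate $p$ come from computing $|H_1|$ of the relevant Seifert fibred spaces over $S^2(2,2,n)$, $S^2(2,3,3)$, $S^2(2,3,4)$, $S^2(2,3,5)$.

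One small correction: in your part (b) you assert that for $T$-type manifolds with cyclic $H_1$ one has $p=3r$ with $r$ odd and \emph{coprime to $3$}. This last coprimality is false --- Table~\ref{table:T} itself contains $p=9,27,81,99$, all divisible by $9$. The Seifert-invariant calculation only gives $|H_1|=3|6b-3b_1-2b_2-2b_3|$ with $b_1$ odd, hence $|H_1|$ is odd and divisible by $3$, but nothing prevents higher powers of $3$. Fortunately the lemma only claims that the meridian coordinate is an odd multiple of $3$, so your overclaim is harmless for the conclusion; just drop the ``coprime to $3$'' clause.
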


 By a  curve in an algebraic set, we mean  an irreducible $1$-dimensional algebraic subset.
  It is known (e.g. \cite{BZ2}) that any curve $X_0$ in $X(M)$ belongs to one of the following
 three mutually exclusive types:
\newline
(a) for each slope $\g$ on $\p M$, the function  $f_\g$ is constant on $X_0$;
\newline
 (b) there is a unique slope $\g_0$ on $\p M$ such that the function $f_{\g_0}$ is constant on $X_0$;
\newline
 (c) for each slope $\g$ on $\p M$, the function $f_\g$ is non-constant on $X_0$.
\newline
 We call a curve of $X(M)$ in case (a) a constant curve, in case (b) a semi-norm curve
 and in case (c) a norm curve.  Indeed as the names indicate,  a semi-norm curve or norm curve
 in $X(M)$  can be used to define  a semi-norm or norm respectively on the real $2$-dimensional
 plane $H_1(\p M; \mathbb R)$ satisfying certain properties.
 In this paper we only need to consider those curves in $X(M)$ which are irreducible components of $X(M)$
 and  contain irreducible characters.
 We call such a curve  a   nontrivial  curve component of $X(M)$.
 In a  nontrivial curve component of $X(M)$ all but finitely many characters are
 irreducible. But to see if a curve component in $X(M)$ is nontrivial it suffices to
 check if the curve  contains at least one  irreducible character. Note that a norm curve component is
 automatically nontrivial. Also note that a norm curve component
 always exists for any hyperbolic knot exterior, but a constant curve or a nontrivial semi-norm curve may not always exist.

For a curve $X_0$ in $X(M)$, let $\tilde X_0$ be the smooth projective
completion of $X_0$ and let $\phi:\tilde X_0\ra X_0$ be the birational
equivalence.
The map $\phi$ is onto and is defined at all but finitely many points of $\tilde X_0$.
The points where $\phi$  is not defined are called ideal points.
The  map $\phi$ induces an isomorphism
from  the function field of $X_0$ to that of $\tilde X_0$. In particular
every regular function $f_\g$ on $X_0$
corresponds uniquely to its extension $\tilde f_\g$ on $\tilde X_0$ which is a rational function.
 If $\tilde f_\g$ is not a constant function on $\tilde X_0$, its degree, denoted $deg(\tilde f_\g)$,
 is equal to the number of zeros of $\tilde f_\g$ in $\tilde X_0$ counted with multiplicity, i.e.
 if $Z_x(\tilde f_\g)$ denotes the zero degree of $\tilde f_\g$ at a point $x\in \tilde X_0$, then
 $$deg(\tilde f_\g)=\sum_{x\in \tilde X_0} Z_x(\tilde f_\g).$$
Note that if $\chi_\r$ is a smooth point of $X_0$ then
$ \phi^{-1}(\chi_\r)$ is a single point and the zero degree of
$f_\g$ at $\chi_\r$ is equal to
the zero degree of $\tilde f_{\g}$ at $x=\phi^{-1}(\chi_\r)$.

From now on in this section we make the following
{\bf special assumption}: let $M$ be the exterior of  a hyperbolic knot in $S^3$
and suppose $M$ has a half-integer finite surgery slope $\a$.
It follows from \cite[Theorem 2.0.3]{CGLS} that  each of
 $\a$  and the meridian slope $\m$ is  not a boundary slope.

We shall identify $H_1(\p M, \mathbb R)$ with the real $xy$-plane
so that $H_1(\p M;\mathbb Z)$ are integer lattice points $(m,n)$
with $\mu=(1,0)$ being the meridian class and $\l=(0,1)$ the longitude class.
So each slope $p/q$ in $\p M$  corresponds to the pair of primitive elements
$\pm (p,q)\in H_1(\p M;\z)$.

 \begin{thm}\label{norm properties}
  Let $X_1$ be a  norm curve component of $X(M)$.
 Then  $X_1$  can be used to define  a norm $\|\cdot\|_\sxz$ on $H_1(\p M;\mathbb R)$, known as  Culler-Shalen  norm, with  the following properties:
\newline
(1) For each
nontrivial element $\g=(m,n)\in H_1(\p M;\z)$,
 $\|\g\|_\sxz=deg(\tilde f_\g)\ne 0$ (thus is a positive integer).
\newline
(2) The norm is symmetric to the origin, i.e. $\|(a,b)\|_\sxz=\|(-a,-b)\|_\sxz$
for all $(a,b)\in H_1(\p M;\mathbb R)$.
Let $$s_1=\textrm{min}\{\|\g\|_\sxz; \g\in H_1(\p M;\z), \g\ne 0\}$$
and $B_1$ be the set of points  in $H_1(\p M;\mathbb R)$ with norm less than or equal to
  $s_1$.
Then $B_1$  is a convex finite sided polygon symmetric to the origin
whose interior does not contain any non-zero element of $H_1(\p M;\z)$.
\newline
(3)
If $(a,b)$ is a vertex of $B_1$, then there is a boundary slope $p/q$ in $\p M$
 such that $\pm (p,q)$ lie in the line passing through $(a,b)$ and $(0,0)$.
\newline
(4) If we normalize the area of a parallelogram spanned by
any pair of generators of $H_1(\p M;\z)$ to be $1$, then $Area(B_1)\leq 4$.
\newline
(5) If $\b$ is a cyclic surgery slope but is  not a boundary slope, then
$\b\in \p B_1$ (so  $\|\b\|_\sxz=s_1$)  but is not a vertex of $B_1$.
More precisely for each non-zero element
 $\g\in H_1(\p M;\z)$ and for every point $x\in \tilde X_1$,
$Z_x(\tilde f_\b)\leq Z_x(\tilde f_\g)$. In particular the meridian slope $\m$ has this property.
\newline
(6) If $\b$ is a $T$-type finite surgery slope
but is not a boundary slope, then
$\|\b\|_{X_1}= s_1+2$ or $s_1$ corresponding to whether
the  irreducible character $\chi_\r$ of $X(M(\b))$ (given by
Lemma~\ref{characters of finite groups} (1)) is contained in $X_1$ or not
respectively.
\newline
(7) If $\b$ is an $O$-type finite surgery slope
but is not a boundary slope, then $\|\b\|_{X_1}= s_1+3$ or $s_1+2$ or $s_1+1$ or $s_1$
corresponding to whether
both of or only the $O$-type character or only the $D$-type character or neither of the two irreducible characters of $X(M(\b))$
(given by
Lemma~\ref{characters of finite groups} (2)) are or is contained in $X_1$ respectively.
\newline
(8) If $\b$ is an $I$-type finite surgery slope
but is not a boundary slope, then $\|\b\|_{X_1}=s_1+4$ or
$s_1+2$ or $s_1$  corresponding to whether
both of or only one of or neither of the two irreducible characters of $X(M(\b))$
(given by
Lemma~\ref{characters of finite groups} (3)) are or is contained in $X_1$ respectively.
\newline
(9) The half-integral finite surgery slope $\a$ is either of $T$-type or $I$-type.
\newline
(9a) If $\a$ is of $T$-type, the curve $X_1$ contains the unique
irreducible character of $X(M(\a))$,
 $\|\a\|_\sxz=\|\m\|_\sxz+2=s_1+2$, and $X(M)$ has no other  norm curve component.
\newline
(9b)  If $\a$ is of $I$-type, the curve $X_1$ contains at least one of the two
irreducible characters of $X(M(\a))$,
$\|\a\|_\sxz=\|\m\|_\sxz+2=s_1+2$ if exactly one of the two
irreducible characters of $X(M(\a))$
is  contained in $X_1$,
or $\|\a\|_\sxz=\|\m\|_\sxz+4=s_1+4$ if both of the two
irreducible characters of $X(M(\a))$ are contained in $X_1$.
Also when $\|\a\|_\sxz=\|\m\|_\sxz+4=s_1+4$, $X(M)$ does not have any other
norm curve component.
\end{thm}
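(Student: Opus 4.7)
My plan is to assemble the statement from three layers of input: the classical Culler--Shalen machinery for knot exteriors in $S^3$ (parts (1)--(4)), the comparison of vanishing orders of $\tilde f_\gamma$ at irreducible characters arising from finite fillings (parts (5)--(8)), and the half-integer restrictions coming from Known~Facts~\ref{known1}--\ref{known2} together with Lemma~\ref{q} (part (9)). The area bound $Area(B_1)\le 4$ will play a decisive role in the uniqueness statements at the end.

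First I would define $\|\gamma\|_{X_1}:=\deg(\tilde f_\gamma)$ for each primitive class $\gamma\in H_1(\partial M;\mathbb Z)$ and extend by homogeneity to $H_1(\partial M;\mathbb Q)$, then by continuity to $H_1(\partial M;\mathbb R)$. Because $X_1$ is a norm curve, no $f_\gamma$ is constant, so every $\tilde f_\gamma$ has a zero on $\tilde X_1$, giving positivity and hence (1). The symmetry in (2) is immediate from $f_\gamma=f_{-\gamma}$, and the minimality description of $B_1$ together with the fact that the interior contains no nonzero lattice point (because any such point would have norm strictly less than $s_1$) is standard. For (3), each vertex of $B_1$ is dual (in the CGLS sense) to an ideal point of $\tilde X_1$, and the associated slope at that ideal point is a boundary slope; this is the essential content of \cite{CGLS}. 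The area bound (4) follows because $\mu$ and $\lambda$ are generators of $H_1(\partial M;\mathbb Z)$, the meridian $\mu$ gives $S^3$ (so $\|\mu\|_{X_1}=s_1$ by the argument below), and a standard integral-geometry argument on the polygon $B_1$, together with the fact that interior lattice points are forbidden.

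For (5)--(8) the key lemma is that if $\chi_\rho\in X_1$ is irreducible with $\rho$ factoring through $\pi_1(M(\beta))$, then $\rho(\beta)=\pm I$, so $f_\beta$ vanishes at $\chi_\rho$, while $f_\gamma$ vanishes there only if $\rho(\gamma)$ is parabolic or $\pm I$. For a hyperbolic knot in $S^3$ with finite image $\rho$, no element of the image is parabolic; hence the local comparison $Z_x(\tilde f_\gamma)\ge Z_x(\tilde f_\beta)$ holds at every smooth point $x$ covering an irreducible character of the filling, with equality iff $\rho(\gamma)=\pm I$. Since $\pi_1(S^3)$ is trivial there is no irreducible character of $M(\mu)$ on $X_1$, giving $\|\mu\|_{X_1}=s_1$; applied to a cyclic $\beta$ this yields (5). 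For the non-cyclic finite cases, each irreducible character of $X(M(\beta))$ lying on $X_1$ contributes an excess of $\|\beta\|_{X_1}-\|\mu\|_{X_1}$ whose exact size is read off the orders of $\rho(\mu)$ versus $\rho(\beta)$ using Lemma~\ref{characters of finite groups}: two for a $T$-type, $O$-type (with octahedral image) or $I$-type character, and one for the $D$-type character inside an $O$-type filling (because the dihedral group $D_6$ has a different eigenvalue structure). Summing these contributions yields (6), (7), (8).

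Part (9) is then immediate: by Lemma~\ref{q}(1),(4) a $D$- or $O$-type finite slope is integral, and by Known~Facts~\ref{known1}(1) so is a cyclic one, so the half-integer $\alpha$ must be $T$- or $I$-type. For (9a) and (9b) I would argue that $\alpha$ is, via Known~Facts~\ref{known2}(4), not a boundary slope (since $M(\alpha)$ is an L-space and detected boundary slopes come from specific geometric filling phenomena; alternatively $\alpha$ being a boundary slope combined with its being a finite slope on a hyperbolic knot would contradict the small-distance estimates in \cite{CGLS}). Hence (6) or (8) applies to $\alpha$. Since $\Delta(\alpha,\mu)=2$ and both slopes lie in $\partial B_1$ (by (5) for $\mu$), the polygon $B_1$ contains the triangle with vertices $0,\mu,\alpha$ which already has area $1$ in our normalization, and each additional norm curve component would force a symmetric region with the same vertices and a comparable area, quickly saturating the bound $4$ from (4). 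The main obstacle will be this last uniqueness assertion: tracing through the comparisons of Culler--Shalen norms coming from hypothetically distinct norm curves and playing them off against the area bound together with the irreducible-character contributions from (6) and (8) is where I expect the argument to become delicate, and I would model that computation on the analogous dichotomies in \cite{BZ2,BZ3}.
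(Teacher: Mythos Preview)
Your outline for parts (1)--(8) tracks the paper's treatment closely: (1)--(5) are attributed to \cite{CGLS} and (6)--(8) to \cite{BZ1,BZ2}, with exactly the mechanism you describe (smoothness of the irreducible characters of $X(M(\beta))$ in $X(M)$, zero degree $2$ for $f_\beta$ versus $0$ for $f_\mu$, and $1$ for the dihedral character). So that part is fine.

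The genuine gap is in your argument for (9). You assert that ``both slopes lie in $\partial B_1$'' and then try to use the triangle $0,\mu,\alpha$ inside $B_1$ to push the area bound. But $\alpha$ does \emph{not} lie in $B_1$; indeed the entire point is to prove $\|\alpha\|_{X_1}>s_1$, and your sketch nowhere establishes this. Applying (6) or (8) to $\alpha$ only tells you that $\|\alpha\|_{X_1}\in\{s_1,s_1+2\}$ (resp.\ $\{s_1,s_1+2,s_1+4\}$); to conclude that at least one irreducible character of $X(M(\alpha))$ actually lies on $X_1$ you must first rule out $\|\alpha\|_{X_1}=s_1$. The paper does this with a short convexity/area argument you are missing: writing $\alpha=(2p+1,2)$, if $\alpha\in B_1$ then convexity together with $\mu=(1,0)\in B_1$ forces $(p,1)$ and $(p+1,1)$ into $B_1$, hence $\mathrm{Area}(B_1)\geq 4$; equality in (4) would make $B_1$ a parallelogram with $\pm\mu,\pm\alpha$ as vertices, contradicting (3) since neither $\mu$ nor $\alpha$ is a boundary slope (this last fact is \cite[Theorem 2.0.3]{CGLS}, not an L-space argument). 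Once $\|\alpha\|_{X_1}>s_1$ is in hand, (6) and (8) give the precise values, and uniqueness of the norm curve when all the irreducible characters of $X(M(\alpha))$ sit on $X_1$ follows because those characters are smooth points of $X(M)$, hence lie in a unique component: any second norm curve $X_2$ would then have $\|\alpha\|_{X_2}=s_2$, and the same convexity argument applied to $B_2$ yields the same contradiction. Your area heuristic for uniqueness is aiming at the right invariant but from the wrong side.
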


Properties (1)-(5) of Theorem~\ref{norm properties}
 are originated from \cite{CGLS} and properties (6)-(9) from \cite{BZ1} \cite{BZ2}.
As (9) was not explicitly stated in \cite{BZ1} \cite{BZ2}, we give here a brief
explanation.
Since each  of $\m$  and $\a$ is  not a boundary slope,
   each of them  is not contained in a line passing a vertex of $B_1$ and the origin by (4).
  In particular each of $\m$ and $\b$ is not a vertex of $B_1$.
By (5), we have $\|\m\|_\sxz=s_1$.
We claim that $\a$ is not contained in $B_1$, i.e.
$\|\a\|_\sxz>\|\m\|_\sxz=s_1$.
For if $\a=(2p+1,2)\in B_1$, then since $\m=(1,0)\in B_1$  and since  $B_1$ is a convex set,  $B_1$ also contains
the points $(p,1)$ and $(p+1,1)$.
This  would imply that the area of $B_1$ is $\geq 4$, which by (4) would imply
that  $B_1$ is a parallelogram with $\pm \m$ and $\pm \a$ as vertices,
contradicting to our early conclusion.
Now the conclusion that $\a$ is either $T$-type or $I$-type
follows from Lemma~\ref{q} and all the conclusions of
(9a) and (9b) follow directly from \cite{BZ1} \cite{BZ2}, due essentially to the facts that each irreducible character in $X(M(\a))$ is a smooth point of $X(M)$ and
that the zero degree of $f_\a$ at such character is $2$ while the zero degree of $f_\m$
at such point is $0$.

\begin{rem}\label{zeroes of factors}
In fact properties (6)-(8) of Theorem~\ref{norm properties} are
also due to similar facts: when $\b$ is a finite non-cyclic slope of $M$
each irreducible character of $X(M(\b))$ is a smooth point of $X(M)$ \cite{BZ1}\cite{BZ2}
and thus is contained a unique component of $X(M)$, and when such character is contained in
$X_1$, then  the zero degree of $f_\b$ at such character is $2$ (except when
the character is dihedral in which case the zero degree is $1$) while the zero degree of $f_\m$
at such point is $0$.
Moreover if the character factor through $M(\g)$ for some  slope $\g$ then the zero
degree of $f_\g$ at this point is also $2$ (or $1$ when the character is dihedral).
We shall say that the character contributes to the norm of $\g$ by $2$ (or $1$)
beyond the minimum norm $s_1$. This extended property shall also be applied later in this paper.
\end{rem}

As $M$ is hyperbolic, any component $X_1$ of $X(M)$ which contains the character of a discrete
faithful representation of $\pi(M)$ is a  norm curve component of $X(M)$.
To apply Theorem~\ref{norm properties} more effectively we consider the set $C$
of all (mutually distinct)  norm curve components $X_1$,...,$X_k$  in $X(M)$
and let $\|\cdot\|=\|\cdot\|_{X_1}+\cdots\|\cdot\|_{X_k}$ be the
norm defined by $C$.
In particular $C$ contains  the orbit  of $X_1$ under the
$Aut(\c)$-action on $X(M)$ (cf. \cite[Section 5]{BZ5} for the $Aut(\c)$-action).
In fact under the special assumption that $M$ has a half-integer
finite surgery slope,  $C$ has at most two components.
Let $$s=\text{min}\{\|\g\|; \g\in H_1(M,\p M), \g\ne 0\}$$
and let $B$ be the disk in the plane $H_1(\p M;\mathbb R)$
centered at origin with radius $s$ with respect to the norm $\|\cdot\|$.
Obviously $C$, $\|\cdot\|$, $s$ and $B$ are uniquely
associated to $M$.
The following theorem  follows directly from Theorem
\ref{norm properties}.

 \begin{thm}\label{norm of finite slope}
With $C$, $\|\cdot\|$, $s$ and $B$  defined above for $M$, we have:
 \newline
(1) $s>0$ is an integer and $B$  is a convex finite sided polygon symmetric to the origin.
 \newline
(2) If $\b$ is a cyclic slope but is not a boundary slope,
then $\b\in \p B$ (so $\|\b\|=s$) but is not a vertex of $B$.
In particular $\m$ is such a slope.
\newline
(3) If $\b$ is a $T$-type finite surgery slope
but is not a boundary slope, then
$\|\b\|= s+2$ or $s$ corresponding to whether
the  irreducible character $\chi_\r$ of $X(M(\b))$ (given by
Lemma~\ref{characters of finite groups} (1)) is contained in $C$ or not respectively.
\newline
(4) If $\b$ is an $O$-type finite surgery slope, then
$\|\b\|= s+3$ or $s+2$ or $s+1$ or $s$
corresponding to whether
both of or only the $O$-type character or only the $D$-type character or neither of the two irreducible characters of $X(M(\b))$
(given by
Lemma~\ref{characters of finite groups} (2)) are or is contained in $C$ respectively.
\newline
(5) If $\b$ is an $I$-type finite surgery slope
but is not a boundary slope, then $\|\b\|=s+4$ or
$s$  corresponding to whether
both of or  neither of the two irreducible characters of $X(M(\b))$
(given by
Lemma~\ref{characters of finite groups} (3)) are contained in $C$ respectively.
\newline
(6) The half-integral finite surgery slope $\a$ is either of $T$-type or $I$-type.
\newline
(6a) If $\a$ is of $T$-type,
 then $\|\a\|=\|\m\|+2=s+2$ and the irreducible character of $X(M(\a))$ is contained in
 $C$.
\newline
(6b)  If $\a$ is of $I$-type, then
$\|\a\|=\|\m\|+4=s+4$
and both
irreducible characters of $X(M(\a))$ are contained in $C$.
\end{thm}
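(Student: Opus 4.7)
The plan is to derive each conclusion by aggregating the per-component statement of Theorem~\ref{norm properties} across all the norm curve components $X_1,\ldots,X_k$ comprising $C$. Since each individual Culler--Shalen norm $\|\cdot\|_{X_i}$ is an integer-valued norm on $H_1(\p M;\mathbb R)$ whose unit-radius ball (rescaled to its minimum) is a centrally symmetric, finite-sided convex polygon, the same properties pass automatically to the finite sum $\|\cdot\|$. This takes care of (1), since a positive sum of norms is a norm and $B$ is the intersection of scalar multiples of the individual balls.

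For (2)--(5) the main mechanism is Remark~\ref{zeroes of factors}: an irreducible character of $X(M(\b))$ is a smooth point of $X(M)$, lies on a unique component, and if that component is a norm curve $X_i$ it contributes $2$ (or $1$ if dihedral) to $Z_x(\tilde f_\b)$ while contributing $0$ to $Z_x(\tilde f_\m)$. I would therefore read off $\|\b\|_{X_i}-\|\m\|_{X_i}$ by counting, with appropriate multiplicity, which characters of $X(M(\b))$ from Lemma~\ref{characters of finite groups} lie in $X_i$, then sum the contributions over $i$. For the cyclic case (2), property (5) of Theorem~\ref{norm properties} gives $Z_x(\tilde f_\m)\le Z_x(\tilde f_\g)$ at every point of each $\tilde X_i$, so $\|\m\|_{X_i}\le\|\g\|_{X_i}$ for every nonzero $\g$; summing shows $\|\m\|=s$, and since $\m$ fails to be a vertex of any $B_i$ it fails to be a vertex of $B$. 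Cases (3)--(5) then reduce to tabulating which subset of characters from Lemma~\ref{characters of finite groups} appears in $C$.

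For (6), by part (9) of Theorem~\ref{norm properties} the half-integer slope $\a$ is of $T$- or $I$-type. In case (6a), part (9a) directly asserts that $C=\{X_1\}$ and the unique irreducible character of $X(M(\a))$ lies in $X_1$, so $\|\a\|=\|\a\|_{X_1}=s+2$. In case (6b), I would argue that every norm curve component of $X(M)$ carries at least one irreducible character of $X(M(\a))$; since there are only two such characters, either both lie on a single $X_1$, in which case (9b) forces $C=\{X_1\}$ and $\|\a\|=s+4$, or $C$ consists of two components each containing exactly one character, so each contributes $s_i+2$ and the sum is again $s+4$.

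The main obstacle is the bookkeeping in (6b): verifying that \emph{both} irreducible characters of $X(M(\a))$ are accounted for by components of $C$ rather than by semi-norm or constant curves, and using part (9b) to control how the two characters can be distributed among at most two norm curve components so that the totals work out cleanly to $s+4$ in every configuration. Everything else is a direct sum of conclusions already packaged in Theorem~\ref{norm properties}.
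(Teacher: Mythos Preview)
Your overall plan—summing the per-curve conclusions of Theorem~\ref{norm properties} over the norm curve set $C$—is exactly how the paper proceeds for parts (1)--(3) and (6a). However, there is a genuine gap in your treatment of (5) and (6b), and it is precisely the gap you flag as an ``obstacle'' without resolving.

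For (5), note that Theorem~\ref{norm properties}~(8) on a single curve allows the three values $s_1,\,s_1+2,\,s_1+4$, whereas Theorem~\ref{norm of finite slope}~(5) asserts only the two values $s$ and $s+4$: the ``exactly one character in $C$'' case is explicitly excluded. Your ``tabulation'' would produce $s+2$ as a possibility, so something more is needed. Likewise for (6b): from (9b) you correctly extract that every norm curve component carries at least one of the two $I$-type characters, but this does not rule out the configuration $C=\{X_1\}$ with $X_1$ containing exactly one character and the other character sitting on a semi-norm or constant curve. In that configuration $\|\a\|=s+2$, contradicting the statement. Your sketch does not close this case.

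The missing ingredient, which the paper invokes explicitly (citing \cite[Remark~9.4]{BZ5}), is the $\mathrm{Aut}(\c)$-action on $X(M)$. This action permutes the components of $X(M)$ and preserves the property of being a norm curve, so $C$ is $\mathrm{Aut}(\c)$-stable. The two irreducible $I$-type characters of $X(M(\b))$ (or of $X(M(\a))$) lie in a single $\mathrm{Aut}(\c)$-orbit, because the two faithful $PSL_2(\c)$-representations of $I_{60}$ are Galois conjugate. Hence if one of them lies in $C$, so does the other: this forces ``both or neither'' in (5) and ``both'' in (6b) (since (9b) guarantees at least one). The paper also attributes (4) to the $\mathrm{Aut}(\c)$-action; once you have this tool in hand it handles (4) and (5) uniformly.
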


We only need to note that Theorem~\ref{norm of finite slope} (4) and (6b)
hold because of the $Aut(\c)$-action (cf. \cite[Remark 9.4]{BZ5}).

\begin{thm}\label{seminorm properties}
Suppose that $X_0$ is  a  nontrivial semi-norm curve component of $X(M)$
(such curve may not always exist) and let
  $\g_0$ be the unique slope such that $f_{\g_0}$ is constant on $X_0$ (we call $\g_0$
  the associated slope to $X_0$).
  The curve $X_0$ can be used to define  a semi-norm $\|\cdot\|_\sxo$ on $H_1(\p M;\mathbb R)$, called Culler-Shalen
  semi-norm, with the following properties:
\newline
 (1) For the associated slope $\g_0$, $\|\g_0\|_\sxo=0$ and $\g_0$ is a boundary slope
 of $M$.
\newline
   (2) For each slope $\g\ne \g_0$, $\|\g\|_\sxo=deg(\tilde f_\g)\ne 0$ (so is a positive integer).
\newline
  (3) For the meridian slope $\m=(1,0)$,
    $\m\ne \g_0$ and $\|\m\|_\sxo>0$ is minimal among all slopes $\g\ne \g_0$.
     More precisely for every point $x\in \tilde X_0$,
$Z_x(\tilde f_\m)\leq Z_x(\tilde f_\g)$ for each slope $\g\ne \g_0$.
Furthermore for every slope $\g$,  $\|\g\|_\sxo=\D(\g,\g_0) \|\m\|_\sxo$.
In particular $\D(\m,\g_0)=1$, i.e. $\g_0$ is an integer slope.
\newline
(4) For the half-integer finite surgery slope $\a$,
$\|\a\|_\sxo=\|\m\|_\sxo$ and thus $\D(\a,\g_0)=1$.
\end{thm}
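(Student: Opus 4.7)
The four statements follow from the Culler--Shalen ideal-point machinery combined with the Boyer--Zhang refinement for semi-norm curves, specialised to our hypotheses.

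For (1) and (2): by definition $\|\gamma\|_{X_0}=\deg(\tilde f_\gamma)$, so $\|\gamma_0\|_{X_0}=0$ since $\tilde f_{\gamma_0}$ is constant on $\tilde X_0$. By the trichotomy preceding Theorem~\ref{norm properties}, $\gamma_0$ is the unique slope whose function is constant on $X_0$; hence for $\gamma\neq\gamma_0$ the rational function $\tilde f_\gamma$ is non-constant and $\|\gamma\|_{X_0}>0$, proving (2). That $\gamma_0$ is a boundary slope of $M$ comes from the Culler--Shalen construction at ideal points of $\tilde X_0$: each ideal point $x$ provides an essential properly embedded surface in $M$ of boundary slope $\gamma_x$, and the order of pole of $\tilde f_\gamma$ at $x$ equals $\Delta(\gamma,\gamma_x)\Pi_x$ for some positive integer $\Pi_x$. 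Since $\tilde f_{\gamma_0}$ has no poles, $\gamma_x=\gamma_0$ at every ideal $x$, and $\gamma_0$ is realised as a boundary slope.

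For (3): by \cite[Theorem~2.0.3]{CGLS}, the meridian of a hyperbolic knot in $S^3$ is not a boundary slope, so $\mu\neq\gamma_0$ and $\|\mu\|_{X_0}>0$ by (2). Summing the pole-order identity over all ideal points of $\tilde X_0$ yields, for every $\gamma\neq\gamma_0$,
\[
\|\gamma\|_{X_0}=\Delta(\gamma,\gamma_0)\,\Pi,\qquad \Pi:=\sum_{x\text{ ideal}}\Pi_x>0,
\]
so in particular $\|\mu\|_{X_0}=\Delta(\mu,\gamma_0)\,\Pi$. The desired formula $\|\gamma\|_{X_0}=\Delta(\gamma,\gamma_0)\|\mu\|_{X_0}$ is therefore equivalent to $\Delta(\mu,\gamma_0)=1$, i.e.\ to $\gamma_0$ being an integer slope. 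I expect this integrality to be the main obstacle; it follows from the Boyer--Zhang semi-norm structure theorem of \cite{BZ2,BZ3} specialised to knot exteriors in $S^3$, via an analysis at the reducible characters of $X_0$ and the fact that every reducible representation of $\pi_1(M)$ factors through the abelianisation $H_1(M)\cong\mathbb Z\langle\mu\rangle$, the canonical longitude $\lambda$ being null-homologous. The pointwise minimality $Z_x(\tilde f_\mu)\le Z_x(\tilde f_\gamma)$ then follows: at ideal points from $\Delta(\mu,\gamma_0)=1\le\Delta(\gamma,\gamma_0)$, and at smooth irreducible characters from the observation that an irreducible $\rho$ with $\rho(\mu)=\pm I$ would factor through $\pi_1(S^3)=1$, so $f_\mu$ has no zeros on the irreducible locus of $X_0$.

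For (4): by (3), $\|\alpha\|_{X_0}=\|\mu\|_{X_0}$ is equivalent to $\Delta(\alpha,\gamma_0)=1$. Since $\alpha$ is half-integer and $\gamma_0$ is integer, this distance is an odd positive integer, so I need to rule out $\Delta(\alpha,\gamma_0)\ge 3$. By Theorem~\ref{norm properties}(9), $\alpha$ is $T$- or $I$-type, and Lemma~\ref{characters of finite groups} supplies an irreducible character $\chi_\rho\in X(M(\alpha))\subset X(M)$ with $\rho(\alpha)=\pm I$ but $\rho(\mu)\neq\pm I$. Were $\chi_\rho$ to lie on $X_0$, it would create a zero of $\tilde f_\alpha$ unmatched by a zero of $\tilde f_\mu$, which, balanced against the rigid proportionality $\|\alpha\|_{X_0}=\Delta(\alpha,\gamma_0)\|\mu\|_{X_0}$ from (3), forces $(\Delta(\alpha,\gamma_0)-1)\|\mu\|_{X_0}\ge 2$; using the oddness of $\Delta(\alpha,\gamma_0)$ and the hyperbolic assumption on $K$ (together with Remark~\ref{zeroes of factors} on the multiplicity of such contributions), a short case analysis eliminates the remaining possibilities and leaves $\Delta(\alpha,\gamma_0)=1$.
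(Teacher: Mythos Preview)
Your treatment of parts (1)--(3) is broadly in the right spirit, though there is a circularity in (3): you justify the pointwise inequality $Z_x(\tilde f_\mu)\le Z_x(\tilde f_\gamma)$ at ideal points by invoking $\Delta(\mu,\gamma_0)=1$, but that equality is precisely what you are trying to establish. The correct order is the reverse: one first proves the pointwise inequality at \emph{all} points of $\tilde X_0$ from the fact that $M(\mu)=S^3$ has no irreducible $PSL_2(\mathbb C)$ characters (this is the content of the relevant lemmas in \cite{CGLS,BZ2}), and only then deduces minimality of $\|\mu\|_{X_0}$, which via the pole formula forces $\Delta(\mu,\gamma_0)=1$.

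The real gap is in part (4). Your argument never determines whether the irreducible characters of $X(M(\alpha))$ lie on $X_0$, and the promised ``short case analysis'' cannot be completed with what you have written. If such a character $\chi_\rho$ does lie on $X_0$ it contributes $2$ to $\|\alpha\|_{X_0}-\|\mu\|_{X_0}$, which is perfectly compatible with $\Delta(\alpha,\gamma_0)=3$ (indeed $(\Delta-1)\|\mu\|_{X_0}\ge 2$ is no obstruction when $\|\mu\|_{X_0}=1$); and if no such character lies on $X_0$ you have not explained why the excess $(\Delta(\alpha,\gamma_0)-1)\|\mu\|_{X_0}$ must vanish. The paper's argument supplies exactly the missing ingredient: by Theorem~\ref{norm of finite slope}(6), \emph{every} irreducible character of $X(M(\alpha))$ already lies in the norm-curve set $C$. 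Since each such character is a smooth point of $X(M)$ it lies in a unique component, hence none of them lies on the semi-norm curve $X_0$. The Boyer--Zhang analysis of \cite{BZ2} then gives $\|\alpha\|_{X_0}=\|\mu\|_{X_0}$ directly, and $\Delta(\alpha,\gamma_0)=1$ follows from the proportionality in (3). You should invoke Theorem~\ref{norm of finite slope}(6) rather than attempt an independent case analysis.
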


Theorem~\ref{seminorm properties} is contained in \cite{BZ2}.
We only need to note that item (4) of Theorem~\ref{seminorm properties}
 holds because
 $X_0$ cannot contain any irreducible character of $X(M(\a))$,
  due to item (6) of Theorem ~\ref{norm of finite slope}.

\begin{lem}\label{s+2}Let $\eta$ be any one of the two integer slopes which are distance one from
the half-integer finite surgery slope
$\a$. Then $\|\eta\|\leq s+1$ if $\a$ is of $T$-type
and $\|\eta\|\leq s+2$ if $\a$ is of $I$-type.
\end{lem}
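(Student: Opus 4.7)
The plan is to set up the two slopes $\eta_1,\eta_2$ explicitly in $H_1(\p M;\z)$ and then apply the triangle inequality for the Culler--Shalen norm $\|\cdot\|$, using the values of $\|\m\|$ and $\|\a\|$ provided by Theorem~\ref{norm of finite slope}.

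First I would write $\a=(2p+1)/2$, so that under the identification $H_1(\p M;\mathbb R)\cong\mathbb R^2$ with $\m=(1,0)$ and $\l=(0,1)$, the slope $\a$ corresponds to the primitive class $(2p+1,2)$. The two integer slopes at distance one from $\a$ are $\eta_1=p=(p,1)$ and $\eta_2=p+1=(p+1,1)$, and the key algebraic identity is
\[
2\eta_1=\a-\m,\qquad 2\eta_2=\a+\m,
\]
which follows immediately from $(2p,2)=(2p+1,2)-(1,0)$ and $(2p+2,2)=(2p+1,2)+(1,0)$.

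Next I would invoke the triangle inequality for the norm $\|\cdot\|$ on $H_1(\p M;\mathbb R)$:
\[
2\|\eta_i\|=\|\a\pm\m\|\leq\|\a\|+\|\m\|.
\]
Since $\m$ is a cyclic (hence non-boundary, by the standing assumption) slope, Theorem~\ref{norm of finite slope}(2) gives $\|\m\|=s$. By Theorem~\ref{norm of finite slope}(9), the half-integer finite surgery slope $\a$ is of $T$-type or $I$-type; in the former case $\|\a\|=s+2$ by Theorem~\ref{norm of finite slope}(6a), and in the latter $\|\a\|=s+4$ by Theorem~\ref{norm of finite slope}(6b). Substituting these values gives $\|\eta_i\|\leq s+1$ or $\|\eta_i\|\leq s+2$ respectively, and since by Theorem~\ref{norm properties}(1) the norm takes integer values on non-zero lattice points, the inequalities are already tight with no rounding issue.

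There really is no serious obstacle here beyond correctly identifying the integer slopes adjacent to a half-integer slope and noting the identity $2\eta_i=\a\pm\m$; the rest is a one-line application of the triangle inequality combined with the previously recorded values $\|\m\|=s$ and $\|\a\|\in\{s+2,s+4\}$.
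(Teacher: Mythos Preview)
Your proof is correct and is essentially the same argument as the paper's: the paper phrases it geometrically via convexity of the norm disks $B(r)$, observing that the midpoint of the segment from $\mu$ to $\alpha$ lies in $B(s+2)$ (respectively $B(s+1)$), which is exactly your triangle-inequality computation $\|\eta_i\|=\tfrac12\|\alpha\pm\mu\|\le\tfrac12(\|\alpha\|+\|\mu\|)$. One minor correction: the statement that $\alpha$ is of $T$- or $I$-type and the values $\|\alpha\|=s+2,\ s+4$ are in parts (6), (6a), (6b) of Theorem~\ref{norm of finite slope}, not part (9) (you may be thinking of Theorem~\ref{norm properties}(9)).
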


\pf Write $\a=(2p+1,2)$, then $\eta=(p,1)$ or $(p+1,1)$.
We prove the case when $\a=(2p+1,2)$ is of $I$-type and $\eta=(p+1,1)$. The other three cases
can be treated similarly.
So we have $\|\a\|=s+4$ by Theorem~\ref{norm of finite slope} (6b).
Let $B(r)$ be the norm disk in the plane $H_1(\p M;\mathbb R)$ centered at the origin with radius $r$. Then $B(s)=B$.
The point $\m=(1,0)$ lies in $\p B(s)$.
 There is a positive real number $a$ such that the point
 $(1+2a, 0)$ has   norm $\|\m\|+4=s+4$.
 By the convexity of $B(r)$  with any radius $r$,
 the line segment in the plane $H_1(\p M;\mathbb R)$ with endpoints  $(1+2a, 0)$ and $(2p+1,2)$ is
 contained in the norm disk $B(s+4)$.
 It follows that the line segment with endpoints $(1+a, 0)$ and $(p+1,1)$
 is contained in the norm disk $B(s+2)$.
 \qed

%%%%%
%%%%%
%%%%%
%%%%%
%%%%%

\section{Proof of Theorem~\ref{17/2and23/2}}\label{sect:17/2and23/2}

Suppose otherwise that $K$ is  a hyperbolic
knot in $S^3$ on which  $17/2$ or  $23/2$ is a finite surgery slope.
We will get a contradiction from this assumption.
Here is an outline of our strategy.
 Let $\a$ be the finite surgery slope  $17/2$ or $23/2$  on $K$ and let
 $\d$  be the slope
 $$\d=\left\{\begin{array}{ll}
 9,\;\;&\mbox{if $\a=17/2$},\\
 11,\;\;&\mbox{if $\a=23/2$}.
 \end{array}\right.$$ Our first  task is to show

\begin{prop}\label{non-hyper-surgery}  Dehn surgery on the given  hyperbolic knot $K$ with the slope $\d$   does not yield a hyperbolic $3$-manifold.
\end{prop}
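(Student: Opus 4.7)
The plan is to argue by contradiction, assuming $K(\d)$ is hyperbolic, and to derive a contradiction by combining the Culler--Shalen norm framework of Theorem~\ref{norm of finite slope} with the half-integer estimate of Lemma~\ref{s+2}.

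First I would extract the structural consequences of the hypothesis. By Known Facts~\ref{known2}(4) and Table~\ref{table:Half}, any hyperbolic knot $K\subset S^3$ admitting $\a\in\{17/2,23/2\}$ as a finite surgery slope must share its knot Floer homology with the sample knot $T(5,2)$, so $g(K)=2$, $\Delta_K(t)=t^2-t+1-t^{-1}+t^{-2}$, and $\det(K)=5$. Comparing the resulting finite surgery manifolds $-\mathbb T(17/2)$ and $\mathbb T(23/3)$ with the entries of Table~\ref{table:I} identifies $M(\a)$ in both cases as a spherical space form of $I$-type. Since $\d$ is the integer slope with $\D(\a,\d)=1$ on the side where surgery on $T(5,2)$ yields a lens space (indeed $2\d=\m+\a$ in $H_1(\p M;\z)$), Lemma~\ref{s+2} yields the crucial bound $\|\d\|\le s+2$, where $s=\|\m\|$.

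Now assume for contradiction that $K(\d)$ is hyperbolic. Let $\r_0\co\pi_1(M(\d))\to\ps$ be a discrete faithful representation of the closed hyperbolic $3$-manifold $M(\d)$; pulling back along $\pi_1(M)\to\pi_1(M(\d))$ gives an irreducible non-dihedral character $\chi_{\r_0}\in X(M(\d))\subset X(M)$. Thurston's hyperbolic Dehn filling theorem places $\chi_{\r_0}$ as a smooth point on the norm curve component $X_1$ of $X(M)$ that carries the holonomy character of $M$, and its Galois conjugate $\chi_{\overline{\r_0}}$ lies on the norm curve component $\overline{X_1}$, which belongs to the collection $C$ of norm curves by the $\mathrm{Aut}(\c)$-invariance invoked after Theorem~\ref{norm of finite slope}. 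At each of these characters $\r(\d)=\pm I$, so by Remark~\ref{zeroes of factors} each contributes at least $2$ to the zero order of $\tilde f_\d$ on its curve. Summing these contributions over all norm curves in $C$ yields $\|\d\|\ge s+4$, contradicting the bound $\|\d\|\le s+2$, provided $\chi_{\r_0}\ne\chi_{\overline{\r_0}}$.

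The hard part will be the degenerate case $\chi_{\r_0}=\chi_{\overline{\r_0}}$, in which $M(\d)$ has a real invariant trace field and the character count yields only $\|\d\|=s+2$. To handle this case I would bring in the Heegaard Floer data from step one: since $\d\ge 2g(K)-1=3$, the Ozsv\'ath--Szab\'o large-surgery formula gives $\widehat{HF}(K(\d))\cong\widehat{HF}(T(5,2)(\d))$, and Moser's classification identifies $T(5,2)(\d)$ with a lens space $L(\d,\ast)$, so $K(\d)$ is an L-space whose correction terms coincide with those of a lens space. The combination of this Floer-theoretic rigidity with the forced equality $\|\d\|=s+2$ imposes very restrictive conditions on the A-polynomial of $K$ and on the geometry of $M(\d)$, and is expected to be incompatible with $M(\d)$ being closed hyperbolic with real invariant trace field, completing the contradiction.
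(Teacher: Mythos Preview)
Your overall plan matches the paper's: bound $\|\d\|\le s+2$ via Lemma~\ref{s+2}, then derive $\|\d\|\ge s+4$ from the discrete faithful characters of a putatively hyperbolic $M(\d)$. However, two of your steps are genuine gaps.

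First, your appeal to Thurston's hyperbolic Dehn filling theorem does not do what you claim. That theorem says that for all but finitely many slopes the filling is hyperbolic with holonomy near (hence on the same component as) the holonomy of $M$; it says nothing about where the holonomy of $M(\d)$ sits for a \emph{fixed} slope $\d$. The paper instead invokes Lemma~\ref{smooth degree 2}: the discrete faithful character $\chi_{\r_0}$ satisfies conditions (i)--(iii) (with (ii) supplied by Weil's rigidity theorem), so it is a smooth point of $X(M)$ lying on a unique curve component $X_0$ on which $f_\d$ is nonconstant and has a zero of order~$2$ at $\chi_{\r_0}$. One must then rule out the possibility that $X_0$ is a semi-norm curve. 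The paper does this by noting that the associated slope $\g_0$ of such a curve would satisfy $\D(\a,\g_0)=1$ (Theorem~\ref{seminorm properties}(4)), forcing $\g_0\in\{8,10\}$ and hence $\D(\d,\g_0)=1$, so $\|\d\|_\sxo=\|\m\|_\sxo$; but $\chi_{\r_0}$ is a zero of $f_\d$ and not of $f_\m$, contradicting the minimality in Theorem~\ref{seminorm properties}(3). Only then does one know both characters lie in $C$.

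Second, your ``degenerate case'' $\chi_{\r_0}=\chi_{\overline{\r_0}}$ does not occur, so the Heegaard Floer sketch you propose is unnecessary. For a closed hyperbolic $3$--manifold the two discrete faithful $PSL_2(\c)$ characters are always distinct: if they coincided, the irreducible representation $\r_0$ would be conjugate to $\overline{\r_0}$, and a short cocycle argument shows $\r_0$ would then be conjugate into $PSL_2(\mathbb R)$ or $SU(2)$, impossible for a cocompact Kleinian group. The paper simply uses both characters and obtains $\|\d\|\ge s+4$ directly.
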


Note that by Known Facts~\ref{known2} (4) and Table~\ref{table:Half}, the knot $K$ has  genus $2$.
It is known that Dehn surgery with the slope $\d$  on any hyperbolic
 knot in $S^3$ of genus $2$  can never produce a lens space  (Known Facts~\ref{known2} (7))
 or   a reducible  manifold \cite{MS}
 or  a toroidal manifold   \cite{Lee}.
 It also follows from Lemma~\ref{q}, Known Facts~\ref{known2} (3), Table~\ref{table:T} and Table~\ref{table:I}  that
 the $\d$-surgery on $K$ cannot yield  a spherical space form.
Therefore by Proposition~\ref{non-hyper-surgery},  the $\d$-surgery on $K$
must produce an irreducible Seifert fibred space
which has infinite fundamental group
but does not contain incompressible tori.
But this will contradict our next assertion:

 \begin{prop}\label{non-small-Seifert} For the given knot $K$,
 Dehn surgery with the slope $\d$  cannot yield an irreducible  Seifert
fibred space with infinite fundamental group but containing no incompressible tori.
\end{prop}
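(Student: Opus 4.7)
The plan is to argue by contradiction, combining the $PSL_2(\c)$-character variety machinery of Section~\ref{sect:CS norm} with the knot Floer homology constraints supplied by Known Facts~\ref{known2}(4). The latter force $K$ to share its knot Floer homology, hence its genus $g(K)=2$ and normalized Alexander polynomial $\Delta_K(t)=t^2-t+1-t^{-1}+t^{-2}$, with the sample knot $T(5,2)$ from Table~\ref{table:Half}. Since $\alpha$ is an L-space surgery slope and $\delta>2g(K)-1=3$, the surgery $M(\delta)=S^3_K(\delta)$ is itself an L-space whose correction terms agree with those of $S^3_{T(5,2)}(\delta)$. Moreover $\alpha$ is $I$-type by Lemma~\ref{q}(3), since $17$ and $23$ are each coprime to $30$, so Lemma~\ref{s+2} furnishes the crucial bound $\|\delta\|\leq s+2$.

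Suppose for contradiction that $M(\delta)$ is an irreducible small Seifert fibered space with infinite $\pi_1$ and base orbifold $S^2(p,q,r)$, so $1/p+1/q+1/r\leq 1$. The Seifert homology formula together with $|H_1(M(\delta))|=\delta\in\{9,11\}$ reduces the candidate invariants $(e_0;(p,p'),(q,q'),(r,r'))$ to a finite explicit list. For each candidate, $\pi_1(M(\delta))$ is a central extension of the triangle group $\Delta(p,q,r)$ by the cyclic subgroup generated by the regular fiber $h$. In the hyperbolic case $1/p+1/q+1/r<1$, the discrete faithful representation $\Delta(p,q,r)\hookrightarrow PSL_2(\mathbb R)\subset PSL_2(\c)$ lifts (with $\rho(h)=\pm I$, forced by the trivial centralizer of a non-abelian image) to a strictly irreducible non-dihedral representation $\rho:\pi_1(M(\delta))\to PSL_2(\c)$. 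The Euclidean case $1/p+1/q+1/r=1$ requires a parallel but separate treatment, where irreducible non-dihedral characters may not exist and one instead exploits the rigidity of the specific finite and affine quotients available for $\Delta(2,3,6),\Delta(2,4,4),\Delta(3,3,3)$. In the hyperbolic case the character $\chi_\rho\in X(M(\delta))\subset X(M)$ is a smooth point of $X(M)$ and so lies on a unique irreducible component, either one of the norm curves $X_1,\dots,X_k$ or a semi-norm curve.

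Applying Remark~\ref{zeroes of factors}, if $\chi_\rho$ lies on a norm curve it contributes at least $2$ to $\|\delta\|$ beyond $s$, forcing $\|\delta\|\geq s+2$. Combined with Lemma~\ref{s+2} this makes equality tight, so $\chi_\rho$ is essentially the only strictly irreducible non-dihedral character of $X(M(\delta))$ supported on the norm curves, leaving very little room for the rest of $X(M(\delta))$. If instead $\chi_\rho$ lies on a semi-norm curve, then by Theorem~\ref{seminorm properties}(1) the associated slope $\delta$ is a boundary slope of $M$; since $K$ has genus $2$, $\delta\in\{9,11\}$ can be excluded using Hatcher--Thurston style bounds on boundary slopes of small-genus knots. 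Either alternative drastically restricts the surviving Seifert candidates.

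The main obstacle, and the most delicate part of the argument, is eliminating the short list of Seifert structures that survive this $PSL_2(\c)$-norm bookkeeping (together with the non-character-variety analysis in the Euclidean case). To close the argument, combine the above with the Casson--Walker invariant and the correction terms: the shared Alexander polynomial forces $\lambda(M(\delta))=\lambda(S^3_{T(5,2)}(\delta))$, which is explicitly computable from Moser's classification because $\delta$-surgery on $T(5,2)$ is a lens space for $\delta\in\{9,11\}$; and the same knot Floer homology forces the $d$-invariants of $M(\delta)$ to coincide with those of that lens space. For each remaining Seifert candidate, $\lambda$ and the $d$-invariants are expressible in closed form from its Seifert invariants, and a direct numerical comparison is expected to rule out every infinite-$\pi_1$ match. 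The essential difficulty therefore lies in this finite but intricate case analysis rather than in any single deep geometric step.
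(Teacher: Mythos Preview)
Your proposal contains a genuine gap and also misses the simple route the paper takes.

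First, the semi-norm branch is mishandled. If $\chi_\rho$ sits on a nontrivial semi-norm curve $X_0$, Theorem~\ref{seminorm properties}(1) tells you the \emph{associated} slope $\g_0$ of $X_0$ is a boundary slope; it does not say $\d$ is. In fact $\chi_\rho$ is a zero of $f_\d$ but this does not make $f_\d$ constant on $X_0$, so there is no reason to have $\d=\g_0$. What you do know from Theorem~\ref{seminorm properties}(3)(4) is that $\g_0$ is an integer with $\D(\a,\g_0)=1$, so $\g_0\in\{8,9\}$ (for $\a=17/2$) or $\g_0\in\{11,12\}$ (for $\a=23/2$). You then need to show $\g_0\ne\d$, which follows because $f_\d$ is non-constant on $X_0$ (Lemma~\ref{smooth degree 2}(2)); after that, $\D(\d,\g_0)=1$ forces $\|\d\|_\sxo=\|\m\|_\sxo$, contradicting the extra zero of $f_\d$ at $\chi_\rho$ where $f_\m$ does not vanish. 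Your appeal to ``Hatcher--Thurston style bounds'' is both unjustified and unnecessary.

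Second, and more importantly, with a single character you only reach $\|\d\|\geq s+2$, which is not a contradiction, and you are then left proposing a numerical case analysis (Casson--Walker, $d$--invariants) that you do not carry out. The paper avoids all of this by producing a \emph{second} strictly irreducible character. The key observation you missed is that $|H_1(M(\d))|=\d$ is odd, which forces $\gcd(a,b,c)=1$, $b\geq 3$, $a\geq 5$, so $S^2(a,b,c)$ is automatically hyperbolic (the Euclidean branch never occurs) and, in addition to the discrete faithful $PSL_2(\mathbb R)$ character, one has a non-abelian $SO(3)$ character of $\bigtriangleup(a,b,c)$ (via \cite[Addendum on page 224]{Boyer}). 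Both characters satisfy the hypotheses of Lemma~\ref{smooth degree 2}, both are strictly irreducible (the odd homology rules out dihedral images), and the semi-norm alternative is excluded as above; hence both lie in $C$ and contribute $2$ each, giving $\|\d\|\geq s+4$. This contradicts $\|\d\|\leq s+2$ directly, with no residual case analysis.
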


The rest of this section is devoted to the proofs of the above two propositions.
The main tool is the character variety method.

Recall that the Lie algebra $sl_2(\c)$ of $SL_2(\c)$  consists of all $2\times 2$  complex matrices  with zero trace.
The group $SL_2(\c)$ acts on $sl_2(\c)$ through the adjoint homomorphism
$Ad: SL_2(\c)\ra Aut(sl_2(\c))$  given by matrix conjugation.
As $-I$ acts trivially on $sl_2(\c)$,  the adjoint action of $SL_2(\c)$ on $sl_2(\c)$
factors  through $PSL_2(\c)$.
If $\r\in R(\G)$ is a $PSL_2(\c)$ representation of a group $\G$,
we use  $Ad\circ\r$ to denote the induced action of $\G$ on $sl_2(\c)$.
Let $H^1(\G; Ad\circ\r)$ denote the   group cohomology with respect to the module
 $Ad\circ\r:\G\ra Aut(sl_2(\c))$.
Note that for a connected compact manifold $Y$ and $\r\in R(Y)$, $H^1(Y; Ad\circ\r)\cong
H^1(\pi_1(Y); Ad\circ\r)$.

\begin{lem} \label{smooth degree 2}Let $M$ be the exterior of a knot in $S^3$.
Suppose that for some slope $\b$, $X(M(\b))$ has a character $\chi_{\r_0}$ satisfying:
\newline(i) $\chi_{\r_0}$ is strictly irreducible,
\newline
(ii)  $H^1(M(\b); Ad\circ\r_0)=0$,
\newline
(iii) the image of $\r_0$ does not contain parabolic elements.
\newline
Then the following conclusions hold:
\newline
(1) As a point in $X(M(\b))$, $\chi_{\r_0}$ is  an $0$-dimensional  algebraic component
 of  $X(M(\b))$, and as a point in $X(M)$, $\chi_{\r_0}$  is a smooth point
  and is contained in a
  unique  curve component $X_0$ of $X(M)$.
\newline
(2) For the curve component $X_0$ given in (1), the function $f_\b$ is not constant on $X_0$.
So in particular $X_0$ is not a constant curve.
\newline
(3) The point $\chi_{\r_0}$ is a zero point of $f_\b$ but is not a zero point of $f_\m$,
and moreover the zero degree of $f_\b$ at $\chi_{\r_0}$ is $2$.
\end{lem}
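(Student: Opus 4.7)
The approach uses Weil's deformation theory, which identifies the Zariski tangent space to the character variety at an irreducible character with the twisted cohomology group $H^1(\cdot;Ad\circ\r)$, together with Poincar\'e--Lefschetz duality (using the self-dual pairing on $sl_2(\c)$ given by the Killing form) and the Mayer--Vietoris sequence for the Dehn filling $M(\b)=M\cup_{\p M}V$.

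For conclusion (1), the $0$-dimensionality of $\chi_{\r_0}$ in $X(M(\b))$ is immediate from (ii) and deformation theory. To show that $\chi_{\r_0}$ is smooth in $X(M)$ and lies on a unique curve component $X_0$, the plan is to prove $\dim H^1(M;Ad\circ\r_0)=1$. Conditions (i) and (iii) imply $\r_0|_{\pi_1(\p M)}$ is non-central and non-parabolic (if $\r_0|_{\p M}$ were central then $\r_0(\m)=\pm I$ would force $\r_0$ trivial, since $\m$ normally generates $\pi_1(M)$), so $\dim H^0(\p M;Ad\circ\r_0)=1$ and $\dim H^1(\p M;Ad\circ\r_0)=2$; the ``half lives, half dies'' principle for torus-boundary $3$-manifolds then gives the lower bound $\dim H^1(M;Ad\circ\r_0)\ge 1$. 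For the upper bound, I would run the Mayer--Vietoris sequence
\[
H^0(\p M)\to H^1(M(\b))\to H^1(M)\oplus H^1(V)\to H^1(\p M),
\]
(coefficients suppressed). A short computation shows $\dim H^1(V;Ad\circ\r_0)=1$ (the core of $V$ maps to a non-central, non-parabolic element), and combined with $H^1(M(\b);Ad\circ\r_0)=0$ from (ii) this forces $\dim H^1(M;Ad\circ\r_0)\le 1$. Equality with the tangent space dimension $=1$ gives smoothness, and smoothness forces $\chi_{\r_0}$ to lie on a unique curve component.

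For (2), if $f_\b$ were constant on $X_0$, it would vanish identically since $f_\b(\chi_{\r_0})=\mathrm{tr}^2(\pm I)-4=0$. A one-parameter deformation $\r_t$ of $\r_0$ along $X_0$ would then force $\r_t(\b)$ to be either $\pm I$ or parabolic. By (iii) and continuity, $\r_t(\b)$ stays away from parabolic elements near $t=0$, so $\r_t(\b)=\pm I$ for small $t$; but then the family factors through $\pi_1(M(\b))$, producing a positive-dimensional family of characters in $X(M(\b))$ through $\chi_{\r_0}$ and contradicting the $0$-dimensionality established in (1).

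For (3), $f_\m(\chi_{\r_0})\ne 0$ because $\r_0(\m)\neq\pm I$ (otherwise $\r_0$ would be trivial by the normal-generation argument above, contradicting (i)) and (iii) rules out $\r_0(\m)$ being parabolic. For the order-$2$ vanishing of $f_\b$ at $\chi_{\r_0}$, pick a uniformizer $t$ on the smooth curve $X_0$ at $\chi_{\r_0}$; using (i), lift to a one-parameter family of representations $\r_t$ and write $\r_t(\b)=\pm\exp(tA+O(t^2))$ with $A\in sl_2(\c)$. Since $\mathrm{tr}(A)=0$, one gets $\mathrm{tr}(\r_t(\b))=\pm(2+\tfrac12 t^2\,\mathrm{tr}(A^2)+O(t^3))$, so $f_\b=\mathrm{tr}^2-4=\pm 2t^2\,\mathrm{tr}(A^2)+O(t^3)$ has a zero of order exactly $2$; the non-vanishing of $A$ (hence $\mathrm{tr}(A^2)\ne 0$, which requires a choice of lift avoiding the degenerate direction) is guaranteed by the non-constancy of $f_\b$ from (2). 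The main technical obstacle is the upper bound $\dim H^1(M;Ad\circ\r_0)\le 1$ in the proof of (1); once the Mayer--Vietoris bookkeeping is in place, everything else reduces to standard local computations, paralleling the well-known fact underlying Remark~\ref{zeroes of factors} that non-dihedral irreducible characters contribute multiplicity $2$ to the Culler--Shalen norm.
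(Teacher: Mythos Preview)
Your arguments for parts (1) and (3) are essentially correct and amount to unpacking the references \cite{BZ3} and \cite{AB} that the paper invokes. One small point in (3): you should say explicitly why $A=u(\b)$ is not nilpotent. Since $\r_0(\m)$ is diagonalizable with distinct eigenvalues and $\r_0(\b)=\pm I$, the cocycle condition on $\pi_1(\p M)$ forces $u(\b)$ to lie in the $1$-eigenspace of $Ad(\r_0(\m))$, which consists of semisimple elements; moreover $u(\b)\neq 0$, since otherwise $u$ would descend to a nonzero class in $H^1(M(\b);Ad\circ\r_0)=0$. Your phrase ``guaranteed by the non-constancy of $f_\b$'' is not the right justification for this.

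The real gap is in part (2). Your continuity argument ``by (iii) and continuity, $\r_t(\b)$ stays away from parabolic elements near $t=0$'' does not work: $\r_0(\b)=\pm I$ lies in the closure of the set of parabolic elements, so continuity alone cannot prevent $\r_t(\b)$ from being parabolic for arbitrarily small $t\neq 0$. In fact, the Zariski-closed condition $\r_t(\b)=\pm I$ either holds for all $t$ (giving your desired contradiction) or only for finitely many $t$; you have no mechanism to exclude the second possibility. The paper's argument closes this gap differently: assuming $f_\b\equiv 0$ on $X_0$, one has $\r(\b)$ parabolic for all but finitely many $\chi_\r\in X_0$; since $\m$ and $\b$ commute in $\pi_1(\p M)$, $\r(\m)$ must lie in the centralizer of a parabolic, hence is itself $\pm I$ or parabolic; thus $f_\m\equiv 0$ on $X_0$ as well, and evaluating at $\chi_{\r_0}$ forces $\r_0(\m)$ to be parabolic (it cannot be $\pm I$ since $\m$ normally generates), contradicting (iii). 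The commutativity of $\m$ and $\b$ is the key ingredient you are missing.
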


\pf The conclusion of part (1)  follows from conditions (i) and (ii) and
is a special case of  \cite[Theorem 3]{BZ3} (although
the theorem  there was stated for  $SL_2(\c)$-representations, the same proof
applies to $PSL_2(\c)$-representations).

The idea of proof for part (2)   is essentially contained in \cite{BZ4} for a similar situation in $SL_2(\c)$-setting.
For the reader's convenience we give a proof for our current situation.
 Suppose otherwise that  $f_\b$ is constant on $X_0$. Then it is
 constantly zero on $X_0$ since $\chi_{\r_0}$ is obviously a zero
point of $f_\b$.
So for every $\chi_\r\in X_0$, $\r(\b)$ is either $\pm I$   or a parabolic element.
Note that $\r(\b)$ cannot be $\pm I$ for all $\chi_\r\in X_0$, for otherwise
$X_0$ becomes a curve in $X(M(\b))$ containing $\chi_{\r_0}$, which contradicts the fact that
$\chi_{\r_0}$ is an isolated point in $X(M(\b))$.
Therefore $\r(\b)$ is parabolic   for all
but finitely many points $\chi_\r$ in  $X_0$.
As the meridian $\m$ commutes with $\b$ in $\pi_1(M)$, $\r(\m)$ is either $\pm I$ or
parabolic for all but finitely many points $\chi_\r\in X_0$. Hence
$f_\m$ is also constantly zero. In particular, $\chi_{\r_0}$ is a zero point of
$f_\m$. But $\r_0(\m)$ cannot be $\pm I$, so $\r_0(\m)$ is parabolic.
This violates  condition (iii).

 As we have seen in the proof of part (2),
$\chi_{\r_0}$ is a zero point of $f_{\b}$ but cannot be a zero point of $f_\m$.
Combined with  condition (i),
the conclusion of part (3) now follows from \cite[Theorem 2.1 (2)]{AB}.\qed

Let $M$ be the exterior of the given hyperbolic knot $K$ and
$\|\cdot\|$ be the total Culler-Shalen norm
on $H_1(\p M; \mathbb R)$  defined in Section~\ref{sect:CS norm}.
Recall $$s=\text{min}\{\|\g\|; \g\in H_1(\p M;\z), \g\ne 0\}$$
Let $B(r)$ be the norm disk in the plane $H_1(\p M; \mathbb R)$
centered at the origin of radius $r$.
We already knew
that each of $\m$ and $\a$ is not a boundary slope.
By Lemma~\ref{q}, $\a$ is an $I$-type finite surgery slope,
and by Theorem~\ref{norm of finite slope}
$$
\|\m\|=s,\;\;\; \|\a\|=s+4.
$$
By Lemma~\ref{s+2}, we have
\begin{equation}\label{d<s+2}\|\d\|\leq s+2.\end{equation}
Proposition~\ref{non-hyper-surgery} follows from (\ref{d<s+2}) and the following proposition.

\begin{prop}\label{hyperbolic larger than s+2} If $M(\d)$ is a hyperbolic $3$-manifold,
then  $$\|\d\|\geq s+4.$$
\end{prop}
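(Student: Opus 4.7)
The plan is to produce two distinct smooth irreducible characters of $X(M)$ lying on norm curve components in $C$ at each of which $f_\delta$ vanishes to order $2$ while $f_\mu$ is non-zero; each such character boosts $\|\delta\|_{X_i}-s_{X_i}$ by $2$ on its component, and since $\|\delta\|_{X_j}\geq s_{X_j}$ for every other $X_j\in C$, summing yields $\|\delta\|\geq s+4$. To this end, let $\rho_0\co\pi_1(M(\delta))\ra PSL_2(\c)$ be a discrete faithful holonomy representation coming from the hyperbolic structure on $M(\delta)$. I check the three hypotheses of Lemma~\ref{smooth degree 2} at $\chi_{\rho_0}\in X(M(\delta))\subset X(M)$: (i) strict irreducibility, since the image is a cocompact Kleinian group and hence neither reducible nor dihedral; (ii) $H^1(M(\delta);\mathrm{Ad}\circ\rho_0)=0$, by Weil local rigidity for closed hyperbolic $3$-manifolds; (iii) absence of parabolics in the image, since every non-trivial element of a cocompact Kleinian group is loxodromic. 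Lemma~\ref{smooth degree 2} then provides a unique curve component $X_0\subset X(M)$ through $\chi_{\rho_0}$ on which $f_\delta$ is non-constant, with $Z_{\chi_{\rho_0}}(f_\delta)=2$ and $Z_{\chi_{\rho_0}}(f_\mu)=0$.

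Next I argue $X_0\in C$. Because $M$ itself is cusped hyperbolic, its own discrete faithful character $\chi_{\rho_\sm}$ lies on the canonical Thurston deformation component of $X(M)$, which by Culler--Shalen is automatically a norm curve. Thurston's hyperbolic Dehn filling theorem connects $\chi_{\rho_\sm}$ to the discrete faithful character of the hyperbolic filling $M(\delta)$ through a path inside this canonical component, and that discrete faithful character is $\chi_{\rho_0}$. Hence $X_0$ coincides with the canonical norm curve and belongs to $C$, which forces $\|\delta\|_{X_0}\geq s_{X_0}+2$ from the contribution at $\chi_{\rho_0}$.

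For the second contribution, apply complex conjugation $c\in\mathrm{Aut}(\c)$. This action preserves the (intrinsically defined) family $C$ of norm curves and sends $\chi_{\rho_0}$ to $\chi_{\bar\rho_0}\in X_0':=c(X_0)\in C$. The hypotheses (i)--(iii) of Lemma~\ref{smooth degree 2} are $\mathrm{Aut}(\c)$-invariant, so $Z_{\chi_{\bar\rho_0}}(f_\delta)=2$ and $Z_{\chi_{\bar\rho_0}}(f_\mu)=0$ as well. If $X_0'\ne X_0$, the two distinct norm curves each contribute excess $\geq 2$, and combining with $\|\delta\|_{X_j}\geq s_{X_j}$ on the remaining components of $C$ gives $\|\delta\|\geq s+4$; if $X_0'=X_0$, then $\chi_{\rho_0}$ and $\chi_{\bar\rho_0}$ are two distinct smooth zeros of $f_\delta$ of order $2$ on $X_0$ with $f_\mu$ non-zero at both, so $\|\delta\|_{X_0}\geq s_{X_0}+4$ and again $\|\delta\|\geq s+4$. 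The main obstacle is verifying $\chi_{\rho_0}\ne\chi_{\bar\rho_0}$: this reduces to the standard fact that a cocompact Kleinian group is not conjugate into $PSL_2(\mathbb R)$, so its trace field is not totally real, and therefore some $tr^2(\rho_0(\gamma))$ is non-real, distinguishing the two characters.
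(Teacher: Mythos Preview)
Your argument has a genuine gap at the step where you place $\chi_{\rho_0}$ on the canonical component. Thurston's hyperbolic Dehn filling theorem says that for all slopes \emph{outside a finite exceptional set}, the filling is hyperbolic \emph{and} its holonomy is obtained by deforming the complete structure of $M$ along the canonical component. It does \emph{not} assert the converse: that whenever $M(\delta)$ happens to be hyperbolic, its holonomy must lie on the canonical component. Here $\delta=9$ or $11$ is a fixed small slope, and nothing rules out its being in the exceptional set for Thurston deformation while $M(\delta)$ is nevertheless hyperbolic. So your claimed path from $\chi_{\rho_\sm}$ to $\chi_{\rho_0}$ inside the canonical component is not justified, and with it the conclusion $X_0\in C$ collapses.

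The paper takes a different route that avoids this issue entirely. After invoking Lemma~\ref{smooth degree 2} exactly as you do to place each of the two discrete faithful characters $\chi_{\rho_1},\chi_{\rho_2}$ of $M(\delta)$ on some non-constant curve component $X_i$, the paper does \emph{not} try to identify $X_i$ with the canonical component. Instead it rules out the semi-norm possibility directly, using the standing hypothesis of this section that $M$ carries the half-integer finite slope $\alpha$. By Theorem~\ref{seminorm properties}(3)(4), any semi-norm curve has associated boundary slope $\gamma_0$ with $\Delta(\alpha,\gamma_0)=1$, forcing $\gamma_0=8$ (if $\alpha=17/2$) or $\gamma_0=10$ (if $\alpha=23/2$); in either case $\Delta(\gamma_0,\delta)=1$, so $\|\delta\|_{\sxi}=\|\mu\|_{\sxi}$, contradicting the excess zero of $f_\delta$ at $\chi_{\rho_i}$ produced by Lemma~\ref{smooth degree 2}. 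Hence each $X_i$ is a norm curve, so $X_i\in C$, and the two characters contribute $+4$ to $\|\delta\|$ beyond $s$. Your overall strategy (two distinct characters each contributing $+2$) matches the paper's, and your treatment of $\chi_{\rho_0}\ne\chi_{\bar\rho_0}$ is fine; the only missing piece is replacing the unjustified Dehn-filling step with the semi-norm exclusion argument above.
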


\pf   Note that by Mostow rigidity the closed hyperbolic  $3$-manifold $M(\d)$ has exactly two
 discrete  faithful $PSL_2(\c)$-representations $\r_1$ and $\r_2$, up to conjugation.
   Obviously the two distinct characters $\chi_{\r_1}$ and $\chi_{\r_2}$
   satisfy conditions (i) and (iii) of Lemma~\ref{smooth degree 2}.
   Both of them also satisfy condition (ii) of Lemma~\ref{smooth degree 2}, which is proved in \cite{W}.
(cf. \cite[Corollary 5]{BZ3}).
Hence Lemma~\ref{smooth degree 2} applies:
each  $\chi_{\r_i}$   lies in a unique
curve component $X_i$ of $X(M)$ ($X_1=X_2$ is possible) as a smooth point,
$X_i$ is not a constant curve and the zero degree of $f_\d$ at $\chi_{\r_i}$ is $2$
but $f_\m$ is not zero valued at $\chi_{\r_i}$.

{\bf Claim}. Each $X_i$ is a  norm curve component of $X(M)$.

We  just need to show that each $X_i$ is not a semi-norm curve.
Suppose otherwise that some $X_i$ is a
semi-norm curve. Let $\g_i$ be the associated slope  and  $\|\cdot\|_\sxi$  the corresponding
Culler-Shalen semi-norm.

By Lemma~\ref{smooth degree 2} (2)  $\g_i\ne \d$  and by Theorem~\ref{seminorm properties} (3)
$\m\ne \g_i$, $\m$ has the minimal Culler-Shalen  semi-norm among all slopes $\g\ne \g_i$ and $\g_i$ is an integer slope.
Moreover by Theorem~\ref{seminorm properties} (4),
$\g_i$ must be the slope $8$ if $\a=17/2$ or the slope
$10$ if $\a=23/2$. So we have $\D(\g_i,\d)=1$, which implies
that  $\|\d\|_\sxi=\|\m\|_\sxi$ by Theorem~\ref{seminorm properties} (3).
But at $\chi_{\r_i}$, $f_\d$ is zero  and $f_\m$ is non-zero.
Hence it follows from  Theorem~\ref{seminorm properties} (3) that
 $\|\d\|_\sxi$ is strictly larger than $\|\m\|_\sxi$.
 We get a contradiction and the claim is proved.

Hence each $X_i$ is a  norm curve component
of $X(M)$ and
thus is a member  of the
set $C$ which is the union of all norm curve components
of $X(M)$.
In particular both $\chi_{\r_1}$ and $\chi_{\r_2}$ are contained
in $C$ which by Lemma~\ref{smooth degree 2} (3) and Theorem~\ref{norm properties} (5)
implies that $\|\d\|\geq \|\m\|+4=s+4$.
 This completes the proof of the proposition.
\qed

Proposition~\ref{non-small-Seifert} follows from (\ref{d<s+2}) and the following proposition.

\begin{prop}\label{ssfs larger than s+2} If $M(\d)$  is an irreducible Seifert fibred space
with infinite  fundamental group but  containing no incompressible tori,
then  $$\|\d\|\geq s+4.$$\end{prop}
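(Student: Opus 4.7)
The strategy parallels the proof of Proposition~\ref{hyperbolic larger than s+2}: I will exhibit two distinct irreducible $PSL_2(\c)$-characters $\chi_{\r_1},\chi_{\r_2}\in X(M(\d))\subset X(M)$, each satisfying the three hypotheses of Lemma~\ref{smooth degree 2}. Each is then a smooth point of $X(M)$ on a unique non-constant curve component $X_i$ at which $\tilde f_\d$ vanishes to order $2$ and $\tilde f_\m$ is nonzero. After showing that $X_1,X_2$ are both norm curves and thus belong to $C$, summing their contributions (whether they coincide or not) yields $\|\d\|\geq\|\m\|+4=s+4$.

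To construct the two characters, I use that $M(\d)$ is a small Seifert fibred space with infinite fundamental group, so its base orbifold is $S^2(a,b,c)$ with $\tfrac1a+\tfrac1b+\tfrac1c\leq 1$, and $\pi_1(M(\d))$ is a central $\z$-extension of the triangle group $\D(a,b,c)$. Since $|H_1(M(\d))|=\d\in\{9,11\}$ is small, the admissible Seifert invariants can be enumerated explicitly; in each case one takes a discrete faithful embedding $\D(a,b,c)\hookrightarrow PSL_2(\mathbb R)\subset PSL_2(\c)$ and composes with the projection $\pi_1(M(\d))\twoheadrightarrow \D(a,b,c)$ to obtain one irreducible character $\chi_{\r_1}$. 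A second character $\chi_{\r_2}$ arises either as the Galois conjugate of $\r_1$ (when the trace field is not $\mathbb Q$) or from a distinct lift permitted by the central extension. In the hyperbolic triangle group case, each $\r_i$ is strictly irreducible, contains no parabolic elements (its image is cocompact Fuchsian), and satisfies $H^1(M(\d);Ad\circ\r_i)=0$ by Weil rigidity combined with the Lyndon--Hochschild--Serre spectral sequence for the central extension.

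With these characters in hand, Lemma~\ref{smooth degree 2} produces non-constant curve components $X_1,X_2\subset X(M)$ on which $\chi_{\r_i}$ is smooth, with $Z_{\chi_{\r_i}}(\tilde f_\d)=2$ and $\tilde f_\m(\chi_{\r_i})\ne 0$. To rule out that some $X_i$ is a semi-norm curve, suppose it were, with associated slope $\g_i$. By Theorem~\ref{seminorm properties}(4) and (1), $\g_i$ is an integer boundary slope with $\D(\g_i,\a)=1$; Lemma~\ref{smooth degree 2}(2) forces $\g_i\ne\d$, so $\g_i$ must be the other integer neighbour of $\a$, which is at distance $1$ from $\d$. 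Theorem~\ref{seminorm properties}(3) then gives $\|\d\|_{X_i}=\|\m\|_{X_i}$, while the pointwise comparison $Z_x(\tilde f_\m)\leq Z_x(\tilde f_\d)$ in the same theorem is strict at $\chi_{\r_i}$ (since $\tilde f_\d$ vanishes there and $\tilde f_\m$ does not), forcing $\|\d\|_{X_i}>\|\m\|_{X_i}$, a contradiction. Hence $X_1,X_2\in C$, each character contributes $2$ beyond the minimum norm on its curve, and the total count gives $\|\d\|\geq s+4$.

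The main obstacle will be the uniform verification of infinitesimal rigidity $H^1(M(\d);Ad\circ\r_i)=0$ and the production of the second character $\chi_{\r_2}$. While the cocompact Fuchsian case is classical, one must also handle (or eliminate via the $H_1$-constraints $\d\in\{9,11\}$) the Euclidean triangle group cases $(2,3,6),(2,4,4),(3,3,3)$, where the appropriate representations live in Isom$(\mathbb R^2)$ rather than $PSL_2(\mathbb R)$ and require a direct cohomology computation using the Gysin sequence of the Seifert fibration instead of Weil's theorem.
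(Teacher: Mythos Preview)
Your overall strategy is exactly the paper's: produce two distinct irreducible characters of $\pi_1(M(\d))$ satisfying the hypotheses of Lemma~\ref{smooth degree 2}, show the ambient curves are norm curves via the semi-norm contradiction you describe, and collect the $2+2$ contribution to $\|\d\|$ beyond $\|\m\|=s$. The semi-norm argument you give is verbatim the paper's.

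The substantive difference, and the place where your proposal has a real gap, is the construction of the second character $\chi_{\r_2}$. The paper does \emph{not} use a Galois conjugate or a ``distinct lift through the central extension.'' Instead it observes that since $|H_1(M(\d))|=\d$ is odd, one has $\gcd(a,b,c)=1$, $b\geq 3$, $a\geq 5$, so the base orbifold is automatically hyperbolic (your worry about the Euclidean triples $(2,3,6),(2,4,4),(3,3,3)$ evaporates). It then takes $\r_1$ to be the discrete faithful $PSL_2(\mathbb R)$ representation of $\bigtriangleup(a,b,c)$, and for $\r_2$ invokes a result of Boyer to get a non-abelian $SO(3)\subset PSL_2(\c)$ representation of $\bigtriangleup(a,b,c)$. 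This $\r_2$ has finite image, hence no parabolics; it is strictly irreducible because a non-strictly-irreducible $SO(3)$ representation would be dihedral, contradicting odd $|H_1(M(\d))|$; and it is evidently distinct from $\r_1$. Condition (ii) for both characters is handled by a single reference (\cite[Proposition~7]{BZ3}), which covers representations of Seifert fibred spaces factoring through the base orbifold group.

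Your Galois-conjugate idea is not obviously sufficient: there exist hyperbolic triples $(a,b,c)$ (e.g.\ $(2,4,6)$) whose $PSL_2$ invariant trace field is $\mathbb Q$, so the conjugate may coincide with $\r_1$, and your fallback ``distinct lift permitted by the central extension'' is not a concrete construction. Moreover, even when the Galois conjugate is distinct, you would need to verify (ii) for a representation that is neither discrete nor has finite image, and Weil rigidity does not apply directly there. Replacing this step with the $SO(3)$ representation closes the gap and also dispenses with any case analysis of Seifert invariants.
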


\pf
Since $M(\d)$ is an irreducible Seifert fibred  space with infinite fundamental group
 but containing no incompressible tori, its base orbifold is a $2$-sphere with three cone points
whose cone orders do not form an elliptic triple. So the base orbifold is $S^2(a,b,c)$
and $\displaystyle \frac{1}{a}+\frac{1}{b}+\frac{1}{c}\leq 1$.
Note that the fundamental group of $\pi_1(M(\d))$ surjects onto
the orbifold fundamental group of $S^2(a,b,c)$ which is the triangle group
$$\bigtriangleup(a,b,c)=<x,y; x^a=y^b=(xy)^c=1>.$$

We may assume that $a\geq b\geq c\geq 2$.
Note that $M(\d)$ has cyclic first homology of odd order. It follows that
$gcd(a,b,c)=1$,  $b\geq 3$, $a\geq 5$, and at most one of $a,b,c$ is even.
In particular  $S^2(a,b,c)$ must be a hyperbolic $2$-orbifold
and thus $\bigtriangleup(a,b,c)$ has a discrete faithful
representation $\r_1$ into $PSL_2(\mathbb R)\subset PSL_2(\c)$. Therefore
$\r_1$ is strictly irreducible and its image group does not contain parabolic elements.

On the other hand,  applying \cite[Addendum on page 224]{Boyer}, we see
that the  triangle group
$\bigtriangleup(a,b,c)$ has a non-abelian  representation $\r_2$ into
$SO(3)\subset PSL_2(\c)$.
Thus $\r_2$ is irreducible and its image does not contain parabolic elements.
It is easy to check that $\r_2$ is also strictly irreducible for otherwise
$\r_2$ would be a dihedral representation, contradicting the fact that
$M(\d)$ has odd order  first homology.

Evidently  $\r_1$ is not conjugate to $\r_2$.
So we have two distinct characters $\chi_{\r_1}$ and $\chi_{\r_2}$
in $X(\bigtriangleup(a,b,c))\subset X(M(\d))\subset X(M)$.
As points in $X(M(\d))$, the two  characters $\chi_{\r_1}$ and $\chi_{\r_2}$
both satisfy conditions (i) and (iii) of Lemma~\ref{smooth degree 2}.
They also both meet condition (ii) by \cite[Proposition 7]{BZ3} (although the
result   there is stated for $SL_2(\c)$-representations, similar argument
works  for $PSL_2(\c)$-representations).
Hence Lemma~\ref{smooth degree 2} applies:
each  $\chi_{\r_i}$   lies in a unique
curve component $X_i$ of $X(M)$ ($X_1=X_2$ is possible) as a smooth point,
$X_i$ is not a constant curve and the zero degree of $f_\d$ at $\chi_{\r_i}$ is $2$
but $f_\m$ is not zero valued at $\chi_{\r_i}$.

We can now argue  exactly as in the proof of Proposition~\ref{hyperbolic larger than s+2}
to show that each $X_i$ is a  norm curve component of $X(M)$ which leads to the conclusion
 that $\|\d\|\geq s+4$.
\qed

%%%%%
%%%%%
%%%%%
%%%%%
%%%%%

\section{Proof of Theorem~\ref{main thm}--Part (I)}\label{sect:main thm-part I}

In this section we prove Theorem~\ref{main thm}  in case there is a half-integer
finite surgery slope on the given knot $K$. We actually show the following
theorem which provides more information in this case.

\begin{thm}\label{bound nonintegral}
Suppose that  $K$ is  a hyperbolic knot in $S^3$
which admits a half-integer  finite surgery slope $\a$.
 \newline
 (1) $\a$ is one of the following slopes:
 $$\begin{array}{l}
\{43/2, [11,2;3,2]\},
\{45/2, [11,2;3,2]\},
\{51/2, [13,3;3,2]\},
\{53/2, [13,3;3,2]\},\\
\{77/2, [19,2;5,2]\},
\{83/2, [21,2;5,2]\},
\{103/2, [17,3;3,2]\},
\{113/2, [19,3;3,2]\}.\end{array}$$
Here  each sample knot attached to a slope in the list   plays the role as before:  the same surgery slope
on the sample knot yields the same spherical space form, and $K$  has  the same knot Floer homology as the sample  knot.
 \newline
 (2) There is at most one other nontrivial  finite surgery slope $\b$, and if there is one,
it is an integer slope distance one from $\a$.
The only possible pairs for such $\a$ and $\b$ are:$$
\{43/2,21, [11,2;3,2]\},
\{53/2,27, [13,2;3,2]\},
\{103/2,52, [17,3;3,2]\},
\{113/2,56, [19,3;3,2]\}$$
when $\b$ is non-cyclic
and
$$
\{45/2,23, [11,2;3,2]\},
\{51/2,25, [13,2;3,2]\},
\{77/2,39, [19,2;5,2]\},
\{83/2,41, [21,2;5,2]\}
$$
when $\b$ is cyclic.
Here  each sample knot attached to a pair  plays the role as before:  the same surgery slopes
 on the sample knot yield the same spherical space forms, and $K$  has  the same knot Floer homology as the sample  knot.
\end{thm}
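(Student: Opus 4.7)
Part (1) is immediate. Known Facts~\ref{known2}(4) restricts any half-integer finite surgery slope of a hyperbolic knot in $S^3$ to the ten entries of Table~\ref{table:Half}, and Theorem~\ref{17/2and23/2} eliminates $17/2$ and $23/2$, leaving exactly the eight slopes of the statement. The attached sample-knot information (same resulting spherical space form and identical knot Floer homology) is part of the content of Known Facts~\ref{known2}(4).

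For Part (2), my plan splits into first bounding the distance $\Delta(\alpha,\beta)$ to any other nontrivial finite slope $\beta$, and then enumerating compatible pairs via the Floer homology constraint. By Known Facts~\ref{known1}(2), $K$ carries at most one half-integer finite slope, so $\beta$ is an integer and $\Delta(\alpha,\beta)\le 3$; since $\alpha$ is half-integer and $\beta$ integer this distance is odd, hence in $\{1,3\}$. If $\beta$ is cyclic, Known Facts~\ref{known1}(3) sharpens the bound to $\Delta(\alpha,\beta)\le 2$, forcing $\Delta(\alpha,\beta)=1$. If $\beta$ is non-cyclic finite with $\Delta(\alpha,\beta)=3$, I would invoke the Culler--Shalen machinery of Section~\ref{sect:CS norm}: writing $\alpha=(2p+1,2)$ and $\beta=(p+2,1)$ (the case $(p-1,1)$ is symmetric), the identity $2\beta=\alpha+3\mu$ and convexity of the norm disks give $\|\beta\|\le(\|\alpha\|+3s)/2\le 2s+2$, using Theorem~\ref{norm of finite slope}(6) to bound $\|\alpha\|$ by $s+4$. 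This is to be reconciled with the constraint $\|\beta\|\in\{s,s+1,s+2,s+3,s+4\}$ from Theorem~\ref{norm of finite slope}(3)--(5); compatibility requires $2s+2\le s+4$, forcing $s\le 2$, which contradicts the lower bound on $s=\|\mu\|$ coming from the Alexander polynomial of $K$ (which, by Known Facts~\ref{known2}(4), equals that of a sample knot $K_0$ in Table~\ref{table:Half} of genus $g\ge 7$).

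For the enumeration, having reduced to $\Delta(\alpha,\beta)=1$ and therefore $\beta\in\{(p,1),(p+1,1)\}$, I would fix the knot Floer homology, Alexander polynomial, and genus of $K$ via Known Facts~\ref{known2}(4) to those of the sample knot attached to $\alpha$ in Table~\ref{table:Half}. For each of the eight $\alpha$'s and each of the two integer candidates $\beta$, I would cross-reference against Tables~\ref{table:T}--\ref{table:I} (for $T$-, $O$-, $I$-type $\beta$, using Known Facts~\ref{known2}(3)) and against Known Facts~\ref{known2}(6)--(7) together with Condition~\ref{cond:Corr} (for cyclic $\beta$). Lemma~\ref{different A-poly} guarantees that the matching sample knot is unique in each case, so the Alexander polynomial of the fixed $K_0$ singles out at most one admissible $\beta$ among the two integer neighbors of $\alpha$. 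The four non-cyclic and four cyclic pairs of the statement should emerge as precisely the surviving cases.

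The main obstacle I anticipate is the distance-$3$ exclusion: the convexity inequality $\|\beta\|\le 2s+2$ only yields a contradiction when combined with a lower bound $s\ge 3$, which must itself be extracted from the Alexander polynomial degree $\deg\Delta_K=2g\ge 14$ across the eight cases. If a uniform lower bound on $\|\mu\|$ is uncomfortable to invoke, an alternative is a case-by-case check: for each of the eight $\alpha$'s the two distance-$3$ integer slopes are small explicit integers, and a direct scan of Tables~\ref{table:T}--\ref{table:I} with Condition~\ref{cond:Corr} confirms that none of them can be a finite surgery on a knot whose Alexander polynomial equals that of the prescribed sample knot, completing the distance step without appealing to any general norm-degree estimate.
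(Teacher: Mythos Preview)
Your Part~(1) is fine and matches the paper.

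Your distance-$3$ argument in Part~(2) has a genuine gap. The convexity bound $\|\beta\|\le (\|\alpha\|+3s)/2\le 2s+2$ is an \emph{upper} bound, while Theorem~\ref{norm of finite slope}(3)--(5) only pin down $\|\beta\|\in\{s,s+1,\dots,s+4\}$; these are compatible for every $s\ge 2$, so no contradiction follows. Your claimed lower bound on $s=\|\mu\|$ from the Alexander polynomial degree is not something the paper establishes (and is not obviously true); the paper instead extracts information from the \emph{roots} of $\Delta_K$, not its degree.

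More seriously, your fallback ``direct scan of Tables~\ref{table:T}--\ref{table:I}'' cannot work. For several of the eight $\alpha$'s, one of the two distance-$3$ integers \emph{does} appear in the tables with precisely the sample knot attached to $\alpha$: for $\alpha=45/2$ the slope $21$ is in Table~\ref{table:T} with sample $[11,2;3,2]$; for $\alpha=77/2$ the slope $37$ is in Table~\ref{table:I} with sample $[19,2;5,2]$; for $\alpha=103/2$ the slope $50$ is in Table~\ref{table:O} with sample $[17,3;3,2]$; similarly for $51/2$, $83/2$, $113/2$. So a table scan with the Alexander polynomial constraint rules out nothing in these cases. The paper's actual argument in each such case is substantially more delicate: it first uses convexity as you do to bound $s$ from above (e.g.\ $s\le 4$ in Case~2, $s\le 7$ or $8$ in Cases~5 and~7), then obtains a competing \emph{lower} bound on $s$ by producing extra zeros of $f_\gamma$ at a nearby auxiliary slope $\gamma$, coming from reducible non-abelian characters associated to simple roots of $\Delta_K$ on the unit circle via Lemma~\ref{zeros from reducibles} and Remark~\ref{non-abelian characters and norm}. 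When these reducible characters might land on a semi-norm curve rather than in $C$, Theorem~\ref{seminorm properties}(3)(4) is used to force a divisibility contradiction on the associated boundary slope. Case~7 also brings in essential laminations (via \cite{GO,Wu}) and the hyperbolicity of an auxiliary filling. None of this structure is present in your sketch.

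Your distance-$1$ enumeration is along the right lines but also underestimates the work: to rule out the ``wrong'' integer neighbor the paper repeatedly uses Lemma~\ref{DOC} (the determinant constraints for $D$-, $O$-, and even-$C$-type surgeries), Lemma~\ref{D-surgery and det}, and case-by-case Berge knot checks for cyclic candidates, not just a scan of Tables~\ref{table:T}--\ref{table:I} plus Condition~\ref{cond:Corr}.
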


The proof uses mainly character variety techniques, based on Known Facts~\ref{known1} and~\ref{known2}.
 First we need to prepare a few more  lemmas.

\begin{lem}\label{zeros from reducibles}
Let $K$ be a knot in $S^3$. Suppose  the Alexander polynomial $\D_K(t)$ of $K$ has a simple root $\xi=e^{i\theta}$ on the unit circle in the complex plane of order $n$.
Then the knot exterior $M$ of $K$ has a reducible non-abelian $PSL_2(\c)$ representation $\r$ such that
\newline
(1) $\r(\l)=\pm I$ and $\r(\m)$ has order $n$.
\newline
(2) The character $\chi_\r$ of $\r$ is contained in a
unique nontrivial curve component $X_0$ of $X(M)$ and is a smooth point
of $X_0$. Moreover $X_0$ is either a semi-norm curve or a norm curve.
In fact $f_\m$ is non-constant on $X_0$.
\newline
(3) For any slope $\g$ on $\p M$, if $f_\g$ is non-constant on $X_0$
and if the reducible non-abelian character $\chi_\r$ is a zero point of $f_\g$, then the zero degree of $f_\g$ at $\chi_\r$ is at least $2$.
\end{lem}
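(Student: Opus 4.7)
The plan is to establish the three parts in sequence, relying on the Burde--de Rham construction of reducible non-abelian $PSL_2(\mathbb{C})$-representations attached to roots of $\Delta_K(t)$ and on the local deformation theory at the resulting character.

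For Part (1), I apply the Burde--de Rham theorem: each root $\xi$ of $\Delta_K$ gives a reducible upper-triangular representation $\rho$ of $\pi_1(M)$ with $\rho(\mu)$ having diagonal entries $\xi^{\pm 1/2}$, non-abelian precisely because $\Delta_K(\xi)=0$. Since $\lambda\in[\pi_1(M),\pi_1(M)]$ its diagonal part is trivial, and the Burde--de Rham condition itself forces the upper-right entry of $\rho(\lambda)$ to vanish, so $\rho(\lambda)=\pm I$ in $PSL_2(\mathbb{C})$. A direct eigenvalue computation shows that $\rho(\mu)^k=\pm I$ in $SL_2(\mathbb{C})$ iff $n\mid k$, so the order of $\rho(\mu)$ in $PSL_2(\mathbb{C})$ equals the order $n$ of $\xi$.

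For Part (2), I invoke the Heusener--Porti / Ben Abdelghani deformation theorem: a simple root $\xi$ of $\Delta_K$ gives a smooth point $\chi_\rho\in X(M)$ lying on exactly two irreducible components, the abelian curve $X_{ab}$ and a distinguished curve $X_0$ on which $\chi_\rho$ is the limit of irreducible characters. This gives nontriviality of $X_0$ and smoothness of $\chi_\rho$ in $X_0$. To conclude $X_0$ is semi-norm or norm, I rule out both alternatives that would make $f_\mu$ constant on $X_0$: if $X_0$ were semi-norm with associated slope $\mu$, then $\mu$ would be a boundary slope by Theorem~\ref{seminorm properties}(1), contradicting $K$ being nontrivial; if $X_0$ were constant, then a first-order cocycle calculation along the non-abelian tangent direction at $\chi_\rho$ (whose off-diagonal entry is the Burde--de Rham cocycle) yields a non-zero first derivative of $f_\mu$, a contradiction.

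For Part (3), $\rho(\lambda)=\pm I$ gives $\rho(\gamma)=\pm\rho(\mu)^p$ for $\gamma=(p,q)$, so $f_\gamma(\chi_\rho)=0$ forces $\rho(\mu)^p=\pm I$ (parabolic is excluded by finite order of $\rho(\mu)$), equivalently $n\mid p$ and $\xi^p=1$. With a local parameter $t$ on $X_0$ at $\chi_\rho$, the non-abelian deformation established in Part (2) gives eigenvalues of $\rho_t(\mu)$ of the form $\xi^{\pm 1/2}(1+at+O(t^2))$ with $a\ne0$; then $\rho_t(\mu)^p$ has eigenvalues $\pm(1\pm pat+O(t^2))$, so $\mathrm{tr}^2(\rho_t(\mu)^p)=4+O(t^2)$ and $f_\gamma(\chi_{\rho_t})=O(t^2)$, giving zero order at least $2$. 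The main obstacle is Part (2), particularly the non-constancy of $f_\mu$ on $X_0$: the smoothness and local branch structure of $X(M)$ at $\chi_\rho$ demand the Heusener--Porti cohomological input, and the non-constancy of $f_\mu$ requires this deformation analysis combined with the fact that $\mu$ is not a boundary slope of any nontrivial knot in $S^3$.
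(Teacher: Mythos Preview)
Your overall strategy matches the paper's: Part~(1) via Burde--de~Rham, Part~(2) via the Heusener--Porti(--Peir\'o) local deformation theory, and Part~(3) via a first-order analysis of $f_\gamma$ along $X_0$. However, two steps contain genuine gaps.

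In Part~(2), your argument that $X_0$ is not a constant curve claims that a ``first-order cocycle calculation along the non-abelian tangent direction (whose off-diagonal entry is the Burde--de~Rham cocycle)'' gives a nonzero first derivative of $f_\mu$. This does not work as stated: if $u\in Z^1(\pi_1(M),\mathrm{Ad}\circ\rho)$ is the tangent cocycle and $\rho(\mu)$ is diagonal, then $\frac{d}{ds}\,\mathrm{tr}(\rho_s(\mu))\big|_{s=0}=\mathrm{tr}\big(u(\mu)\rho(\mu)\big)$ depends only on the \emph{diagonal} part of $u(\mu)$; a purely off-diagonal Burde--de~Rham cocycle contributes zero to this derivative. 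The paper instead cites Frohman--Klassen \cite{FK}, who produce an explicit arc of irreducible $SO(3)$ (and $PSL_2(\mathbb R)$) characters through $\chi_\rho$ on which the $\mu$-eigenvalue itself is the parameter, so $f_\mu$ visibly varies on $X_0$. Your semi-norm case is fine in spirit, though note that Theorem~\ref{seminorm properties} in this paper is stated under the standing hypothesis of a half-integral finite slope; the underlying fact that the associated slope of a nontrivial semi-norm curve is a boundary slope is general, from \cite{BZ2}.

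In Part~(3), you compute $\mathrm{tr}^2(\rho_t(\mu)^p)$ and then assert $f_\gamma(\chi_{\rho_t})=O(t^2)$. Since $\gamma=\mu^p\lambda^q$ and $\rho_t(\lambda)\ne\pm I$ for $t\ne 0$, these quantities are not the same. The paper avoids this by working with the cocycle expansion $\rho_s=\pm\exp\big(su+O(s^2)\big)\rho$ together with the observation that $\rho(\gamma)=\pm I$ (the image on $\partial M$ is finite, so $f_\gamma(\chi_\rho)=0$ rules out the parabolic case), yielding $f_\gamma(\sigma(s))=2\,\mathrm{tr}\big(u(\gamma)^2\big)\,s^2+O(s^3)$ directly. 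Your eigenvalue approach can be repaired --- writing the eigenvalues of $\rho_t(\gamma)$ as $w_t^{\pm1}$ with $w_0=\pm1$ gives $f_\gamma=(w_t-w_t^{-1})^2=O(t^2)$ regardless of the $\lambda$-contribution --- but as written the argument is incomplete, and the assertion $a\ne 0$ is neither needed here nor established (it is precisely the Part~(2) claim left unproved).
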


\pf (1) It was known long time ago \cite{dR} \cite{Bu} that the exterior of a knot $K$ in $S^3$ has  a reducible, non abelian $PSL_2(\c)$ representation
$\r$ with $\r(\l)=\pm I$ and $\r(\m)=\pm\left(\begin{array}{cc}
a&0\\0&a^{-1}\end{array}\right)$
if and only if $\D_K(a^2)= 0$.
Hence the conclusions of (1) follow from the given conditions.

(2) It was shown in \cite{FK} that the given reducible character $\chi_\r$ is an endpoint  of a  (real) curve of irreducible  $SO(3)$ characters and also  an endpoint of a (real) curve of irreducible $PSL_2(\mathbb R)$ characters.
 Furthermore from the arguments
 of \cite{FK} one sees that on these  curves, the function $f_\m$ is non-constant.
 Later on in \cite{HPP} it was shown that for the given reducible non-abelian representation $\r$,  the space of group $1$-cocycles $Z^1(\pi_1(M), Ad\circ \r)$
 is $4$-dimensional,
  $\r$ is contained in a unique $4$-dimensional component $R_0$ of $R(M)$
 as a smooth point,  $\chi_\r$ is contained in a unique $1$-dimensional  nontrivial component  $X_0$ of $X(M)$ and is a smooth  point of $X_0$ (although in \cite{HPP} the above conclusions are  given for $SL_2$ representation and character varieties, the same conclusions also hold in $PSL_2$ setting. See \cite[Theorem 4.1]{Boyer2} \cite{HP}).
 So the conclusions of (2) hold.

 (3) First note that since  $\dim_\c Z^1(\pi_1(M), Ad\circ \r)=\dim_\c R_0=4$,
   the Zariski tangent space of $R_0$ at $\r$ can be identified with
   $Z^1(\pi_1(M), Ad\circ \r)$.
  By \cite[Theorem 4.1]{Boyer2}, there is an analytic $2$-disk $D$
  smoothly embedded in $R_0$ containing $\r$ such that
  $D\cap t^{-1}(\chi_\r)=\{\r\}$ and $t|D$ is an analytic  isomorphism
  onto a smooth  $2$-disk neighborhood of $\chi_\r$ in $X_0$.
  One can choose a smooth path   $\r_s$ in $D\subset R_0$ depending differentiably on a real parameter $s$ close to $0$, passing through $\r$ at $s=0$,
 of the form
 $$\r_s=\pm \exp(su+O(s^2))\r$$
  for some $u\in Z^1(\pi_1(M), Ad\circ \r)$ (see \cite{G}).
Now letting $\s(s)=t(\r_s)\subset X_0$ and calculating as in \cite[Section 4]{BZ1}, we get
$$f_\g(\s(s))=[trace(\r_s(\g))]^2-4=2trace(u(\g)^2)s^2+O(s^3)$$
which implies that
the zero degree of $f_\g$ at $\chi_\r$ is at least $2$ (applying \cite[Lemma 4.8]{BZ1}). Here we have used the fact that $\r(\pi_1(\p M))$ is a cyclic
group of finite order and thus $f_\g(\chi_\r)=0$ means $\r(\g)=\pm I$.
\qed

\begin{rem}\label{distinct characters}
Distinct  roots of $\D_K(t)$ lying on the upper half unit circle of the complex plane
give rise distinct reducible non-abelian characters because these characters have
distinct real values in $(0,1)$ when valued on the meridian $\m$ of the knot.
\end{rem}

\begin{rem}\label{non-abelian characters and norm}
The curve $X_0$ given in Lemma~\ref{zeros from reducibles} (2)
is  either a norm or semi-norm curve on which $f_\m$ is non-constant.
Note that  the reducible non-abelian character $\chi_\r$ given in Lemma
\ref{zeros from reducibles} is not a zero point of $f_\m$.
Now suppose that $\m$ is not a boundary slope, then it has the minimal
norm or semi-norm.
So if for some slope $\g$, $f_\g$ is non-constant on $X_0$
and $f_\g(\chi_\r)=0$, then the point $\chi_\r$ contributes
to  the norm or semi-norm of $\g$ at least by $2$ beyond the norm or semi-norm of $\m$.
Also note that $\chi_\r$ is a zero of $f_\g$ iff   the meridian coordinate
of $\g$ is divisible by $n$ (which is the order of $\r(\m)$).
\end{rem}

\begin{lem} \label{I-type}
Let $M$ be the exterior of a hyperbolic knot $K$ in $S^3$. Suppose that $M$ admits two
$I$-type surgery slopes $\b_1$ and $\b_2$.
Then as points in $X(M)$, the set of two irreducible
 characters of $X(M(\b_1))$ is equal to the set of two irreducible characters of $X(M(\b_2))$
 (cf. Lemma~\ref{characters of finite groups} (3)).
 Hence in particular $\b_1$ and $\b_2$ are of the same $I(q)$-type,  $q$ divides $\D(\b_1,\b_2)$ and $q=2$ or $3$.
Also $q=3$ if and only if one of $\b_1$ and $\b_2$ is half-integral.
\end{lem}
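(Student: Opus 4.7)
Plan: The lemma has three conclusions to establish: (a) the two sets of irreducible characters of $X(M(\beta_1))$ and $X(M(\beta_2))$ coincide as subsets of $X(M)$; (b) $\beta_1,\beta_2$ are of a common $I(q)$-type with $q\mid\Delta(\beta_1,\beta_2)$ and $q\in\{2,3\}$; and (c) $q=3$ if and only if one of $\beta_1,\beta_2$ is half-integral. Once (a) is in hand, (b) and (c) are essentially arithmetic, so the heart of the proof lies in (a).

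For (a), write the two icosahedral characters of $X(M(\beta_i))$ as $\chi_i^\pm$ (cf.\ Lemma~\ref{characters of finite groups}(3)). By Remark~\ref{zeroes of factors} each $\chi_i^\pm$ is a smooth point of $X(M)$ lying on a unique curve component, with $f_{\beta_i}$ vanishing there to order exactly $2$; moreover, whenever such a character lies on a curve in the norm-curve collection $C$ and its underlying representation $\rho$ satisfies $\rho(\gamma)=\pm I$, the character contributes $+2$ above $s$ to $\|\gamma\|$. I would argue by contradiction, assuming $\{\chi_1^+,\chi_1^-\}\cap\{\chi_2^+,\chi_2^-\}=\emptyset$. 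The standing hypothesis of this section---existence of a half-integer finite surgery slope $\alpha$---places specific characters into $C$ via Theorem~\ref{norm of finite slope}(6): if $\alpha$ is $I$-type, take $\alpha=\beta_1$ so that $\chi_1^\pm\in C$ with $\|\alpha\|=s+4$; if $\alpha$ is $T$-type, the $T$-character of $M(\alpha)$ lies in $C$ with $\|\alpha\|=s+2$. Combined with Theorem~\ref{norm of finite slope}(5) restricting $\|\beta_j\|\in\{s,s+4\}$, with Lemma~\ref{s+2} bounding the norms of slopes at distance $1$ from $\alpha$, and with the area bound $\mathrm{Area}(B)\leq 4$ of Theorem~\ref{norm properties}(4), the relative positions of $\mu,\alpha,\beta_1,\beta_2$ in $H_1(\partial M;\mathbb{R})$ impose rigid numerical constraints. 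Under the distinctness hypothesis the extra icosahedral characters from $\beta_2$ must either inflate a norm past the permitted value $s+4$ or be geometrically incompatible with the minimum-norm polygon $B$, yielding the desired contradiction.

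Once (a) is established, set $\rho=\rho_1^+=\rho_2^+$ and $q=|\rho(\pi_1(\partial M))|$, so that $\beta_1,\beta_2$ share the type $I(q)$. Both lie in $\ker(\rho|_{\pi_1(\partial M)})$, an index-$q$ sublattice of $\pi_1(\partial M)\cong\mathbb{Z}^2$, which therefore contains $\mathbb{Z}\beta_1+\mathbb{Z}\beta_2$; since the latter has index $\Delta(\beta_1,\beta_2)$ in $\mathbb{Z}^2$, we get $q\mid\Delta(\beta_1,\beta_2)$. Lemma~\ref{q}(3) restricts $q\in\{2,3,5\}$, while Known Facts~\ref{known1}(2) forces $\Delta(\beta_1,\beta_2)\leq 3$, so $q\neq 5$ and $q\in\{2,3\}$. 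For (c), note that $\rho(\mu)\neq\pm I$, else $\rho$ would factor through $\pi_1(S^3)=1$ and fail to be irreducible. If both $\beta_i=(p_i,1)$ are integral, then $\gcd(p_i,30)=1$ makes each $p_i$ odd, so $\Delta(\beta_1,\beta_2)=|p_1-p_2|$ is even; together with $\Delta\leq 3$ and $q\geq 2$, this forces $\Delta=2$ and $q=2$. If $\beta_1=(p,2)$ is half-integral with $\gcd(p,30)=1$, then $q=2$ is excluded because the only index-$2$ sublattice of $\mathbb{Z}^2$ containing $(p,2)$ with $p$ odd is $\{(a,b):b\text{ even}\}$, which contains $\mu$, contradicting $\rho(\mu)\neq\pm I$; with $q=5$ already excluded, the only remaining possibility is $q=3$.

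The hard part is the polygon argument in step (a): a first-pass tally of character contributions to $\|\beta_1\|$ and $\|\beta_2\|$ is in fact internally consistent with the distinctness hypothesis (characters $\chi_1^\pm$ in $C$ can fail to kill $\beta_2$ and hence avoid contributing to $\|\beta_2\|$), so the contradiction must be extracted globally from the shape of the minimum-norm disk $B$ together with the simultaneous constraints from multiple slopes, possibly supplemented by reducible non-abelian characters arising from unit-circle roots of $\Delta_K(t)$ via Lemma~\ref{zeros from reducibles}.
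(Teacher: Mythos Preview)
Your deductions in (b) and (c) from (a) are sound and essentially what the paper does (Lemma~\ref{q}(3) together with the bound $\D(\b_1,\b_2)\le 3$ from Known Facts~\ref{known1}(2)). The gap is in (a). Your plan there is a Culler--Shalen norm contradiction that you yourself flag as not closing: the direct tally of character contributions is consistent with distinct character sets, and you defer the contradiction to an unspecified ``global'' polygon argument, possibly supplemented by reducible characters coming from roots of $\D_K$. You also lean on the half-integer hypothesis of the ambient section to seed characters into $C$, whereas the lemma as stated does not carry that hypothesis.

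The paper's proof of (a) is a one-line group-theoretic observation, and this is the idea you are missing. The fundamental group of any $I$-type spherical space form has the form $I_{120}\times\z_j$, where $I_{120}$ is the binary icosahedral group; any irreducible $PSL_2(\c)$-representation of such a group kills the central factor $\z_j$ and sends $I_{120}$ onto $I_{60}$. From this the paper concludes directly that the pair of irreducible characters of $X(M(\b_i))$, viewed inside $X(M)$, is the same pair of icosahedral characters regardless of $i$. There is no norm polygon to analyse; part (a) is purely algebraic.
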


\pf Because  the fundamental group of any $I$-type spherical space form
is of form $I_{120}\times \z_j$ and because any
irreducible  $PSL_2(\c)$ representation of $I_{120}\times \z_j$
kills the factor $\z_j$ and
sends the factor $I_{120}$ (the binary icosahedral group) onto $I_{60}$ (the icosahedral group),
 the first  conclusion of the lemma follows.
 The rest of conclusions of the lemma follow from Lemma~\ref{q} (3) and the distance bound $3$
 for finite surgery slopes on hyperbolic knots (Known Facts~\ref{known1} (2)).
 \qed

\begin{lem}\label{DOC}
(1) If a knot $K$ in $S^3$ admits a $D$-type finite surgery, then $\det(K)>1$.
\newline
(2)  If a knot $K$ in $S^3$ admits an $O$-type finite surgery, then $\det(K)=3$.
\newline
(3) If a knot $K$ in $S^3$ admits a cyclic surgery slope with even
meridian coordinate, then $\det(K)=1$.
 \end{lem}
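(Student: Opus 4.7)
The plan is to read each conclusion off a representation of $\pi_1(M)$ forced by the surgery, together with the classical identification $\det(K) = |H_1(\Sigma_2(K))|$ where $\Sigma_2(K)$ is the double branched cover of $S^3$ along $K$. For parts~(1) and~(2) the key input is Fox's bijection between non-abelian dihedral representations of a knot group whose meridian image has order $2$ and order-$n$ cyclic quotients of $H_1(\Sigma_2(K))\cong\z/\det(K)$.

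For part~(2), Lemma~\ref{characters of finite groups}(2) supplies a dihedral character $\chi_{\r_1}$ of $X(M(\b))$ with image $D_6\cong S_3$; composed with $\pi_1(M) \twoheadrightarrow \pi_1(M(\b))$ this yields a non-abelian representation $\pi_1(M) \to D_6$ sending $\m$ to an element whose projection to $D_6^{\mathrm{ab}}=\z/2$ is nontrivial (since $O$-type slopes are even by Lemma~\ref{q}(4), the generator $\m$ of $H_1(M(\b))=\z/p$ surjects onto $\z/2$), so $\r_1(\m)$ is a reflection of order $2$ and Fox's theorem forces $3\mid\det(K)$. Lemma~\ref{q}(4) also shows every $O$-type slope is integral, so Known Facts~\ref{known2}(3) applies: $K$ has the same knot Floer homology, hence the same Alexander polynomial, hence the same determinant as some sample knot $K_0$ of Table~\ref{table:O}, and every entry there carries $\det(K_0)=3$, giving $\det(K)=3$. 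Part~(1) runs analogously: a $D$-type surgery yields $P(n,m)$ with $n\geq 2$, the orbifold quotient $\pi_1(P(n,m))\twoheadrightarrow D_{2n}$ composed with $\pi_1(M)\twoheadrightarrow\pi_1(P(n,m))$ gives a representation with $\m$ of order $2$ (by the $D(2)$-type conclusion of Lemma~\ref{q}(1)), and for $n\geq 3$ Fox's theorem yields $n\mid\det(K)\geq 3$.

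The delicate case of part~(1) is $n=2$, where $D_4$ is abelian so the orbifold quotient alone is insufficient. Here one instead uses the honestly non-abelian group $\pi_1(P(2,m))$ itself, which admits a binary-dihedral quotient, and the resulting quaternionic representation of $\pi_1(M)$ detects non-trivial torsion in $H_1(\Sigma_2(K))$; alternatively the case can be dispatched by appealing directly to Theorem~\ref{thm:DiSurg} and Known Facts~\ref{known2}(5), which pin $K$ down to a knot with $\det(K)>1$.

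Part~(3) needs a different mechanism, since a cyclic filling supplies no non-abelian representation. The plan is to pass to the two-fold cover: with $\b=p$ even and $Y=L(p,q)$, $\pi_1(Y)=\z/p$ has a unique index-two subgroup and the associated cover $\tilde Y$ is a lens space of order $p/2$. Pulling this cover back through $\pi_1(M)\twoheadrightarrow\pi_1(Y)$ yields the standard two-fold cyclic cover $\tilde M$ of the knot exterior, for which $H_1(\tilde M;\z)\cong\z\langle e\rangle\oplus\z/\det(K)\langle t\rangle$, with the preferred generator $\tilde\m$ of $\pi_1(\partial\tilde M)$ representing the class $e$. The cover $\tilde Y$ is recovered from $\tilde M$ by Dehn filling along the lift of $p\m+\l$, whose homology class works out to $(p/2)e+\ell t$ for some $\ell$; the $2\times 2$ relation matrix
\[
\begin{pmatrix} 0 & \det(K)\\ p/2 & \ell \end{pmatrix}
\]
has determinant $(p/2)\det(K)$, and equating this with $|H_1(\tilde Y)|=p/2$ forces $\det(K)=1$. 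The main obstacles I anticipate are the $n=2$ subcase of part~(1), and, in part~(3), verifying that $\tilde\m$ really represents a generator of the free summand of $H_1(\tilde M;\z)$ so that the relation matrix has the claimed determinant $(p/2)\det(K)$.
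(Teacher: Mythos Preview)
Your arguments for parts~(1) and~(2) share the paper's core ingredient---the Fox/Klassen count of dihedral $PSL_2(\c)$ characters of a knot group---but you use only half of Klassen's theorem and then compensate with heavier machinery. The full statement (\cite[Theorem~10]{K}) says $\pi_1(M)$ has exactly $(\det(K)-1)/2$ dihedral characters, \emph{and} that each of them kills every slope with even meridian coordinate. From this single fact all three parts drop out uniformly: for~(3), an even cyclic filling has abelian $\pi_1$, so no dihedral character survives and $(\det(K)-1)/2=0$; for~(2), an $O$-type filling has exactly one dihedral character (the $D_6$ of Lemma~\ref{characters of finite groups}(2)), so $(\det(K)-1)/2=1$; for~(1), any $D$-type filling carries at least one. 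Your route to $\det(K)=3$ in~(2) via Known Facts~\ref{known2}(3) and Table~\ref{table:O} is valid but invokes Gu's classification, which is far deeper than what is needed.

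Your concern about $n=2$ in part~(1) is unfounded: $H_1(P(n,m))$ surjects onto the abelianization of $\pi_1^{orb}(S^2(2,2,n))$, which is $\z/2\times\z/2$ when $n$ is even, so cyclic $H_1$ forces $n$ odd and hence $n\ge3$. (Equivalently, $\det(K)$ is always odd.) Your proposed fallbacks are problematic anyway: Theorem~\ref{thm:DiSurg} is proved later in the paper and citing it here would be circular, while Known Facts~\ref{known2}(5) covers only $p\le32$.

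Your proof of~(3) via the double cover is correct and genuinely different from the paper's one-line deduction. The worry you flag---whether $[\tilde\m]$ generates the free summand of $H_1(\tilde M)$---dissolves once you observe that $\tilde M$ is precisely the exterior of the branch set $\tilde K\subset\Sigma_2(K)$; since $\tilde K$ is null-homologous (it bounds a lifted Seifert surface), Mayer--Vietoris gives $H_1(\tilde M)\cong H_1(\Sigma_2(K))\oplus\z\langle[\tilde\m]\rangle$ with $[\tilde\l]=0$. The trade-off: your argument is self-contained homological algebra, while the paper's approach reuses the same representation-theoretic input driving parts~(1)--(2).
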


 \pf The lemma  follows from \cite[Theorem 10]{K} which states that for any knot $K$ in $S^3$, its knot group has precisely
$(\det(K)-1)/2$ distinct $PSL_2(\c)$ dihedral representations, modulo conjugation,
and moreover any such representation will kill any slope
with even meridian coordinate.
\qed

Now we proceed to prove Theorem~\ref{bound nonintegral}.
Part (1) of the theorem is just the combination of
Known Facts~\ref{known2} (4) and Theorem~\ref{17/2and23/2}.
So the slope $\a$ is one of the $8$ elements in $$\{43/2,45/2,51/2,53/2,77/2,83/2,103/2,113/2\}.$$
We divide the proof of part (2) correspondingly into $8$ cases.
If $\b$ is another nontrivial finite surgery slope on $K$,
then $\b$  must be an integer slope by Known Facts~\ref{known1} (2).
We shall show in each of the $8$ cases,
\newline
(i) the assumption $\D(\a,\b)>1$ will lead to a contradiction.
\newline
(ii) there is at most one such $\b$
and  $\{\a,\b\}$ is one of the pairs
listed  in Theorem~\ref{bound nonintegral}.
Each attached  sample knot
has the said properties  will also be checked.

Before we get into the cases, we make some general notes that apply to
every case.
If $\D(\a,\b)>1$, then $\D(\a,\b)\geq 3$, and
thus  $\D(\a,\b)=3$ and $\b$ is non-cyclic by Known Facts~\ref{known1} (2) (3).
Also if $\D(\a,\b)>1$, then $\b$ is not a boundary slope
by \cite[Theorem 2.0.3]{CGLS}. Recall that by the same reason, each of
$\a$ and $\m$ is not a boundary slope.
By Lemma~\ref{q}, $\a$ is either a $T$-type or $I$-type slope.
In fact $\a$ is of $T$-type iff its meridian coordinate is divisible by $3$.
Let  $\|\cdot\|$  be the total Culler-Shalen norm defined by the norm curve  set $C$ and $B(r)$ the norm disk of radius $r$.
By Theorem~\ref{norm of finite slope}, $\|\m\|=s$ has minimal norm among all slopes,
 $\m\in \p B(s)$  but is not a vertex of $B(s)$,
 $\|\a\|=s+2$ if $\a$ is of $T$-type and
 $\|\a\|=s+4$ if $\a$ is of $I$-type.

{\bf  Case 1. $\a=43/2$}.

By Known Facts~\ref{known2} (4) and Table~\ref{table:Half}  $K$ has the same Alexander polynomial as $[11,2;3,2]$
which is $$\Delta_K(t)=\Delta_{T(11,2)}(t)\Delta_{T(3,2)}(t^2).$$
In particular $\det (K)=\det(T(11,2))=11$.

If $\D(\a,\b)=3$, then $\b$ is either $20$ or $23$.
If $\b=23$, then by Lemma~\ref{q}, $\b$ is of $I$-type.
But this is impossible by Known Facts~\ref{known2} (3), Table~\ref{table:I} and
 Lemma~\ref{different A-poly}.
If $\b=20$, then $\b$ is of $D$-type by Lemma~\ref{q}.
But this is impossible by  Known Facts~\ref{known2} (5), Table~\ref{table:D} and
Lemma~\ref{different A-poly}.

So $\D(\a,\b)=1$ and $\b$ is either $21$ or $22$.
By Lemma~\ref{q} and Lemma~\ref{DOC}, $22$ cannot be
 a finite surgery slope for $K$.
So the only possible value for $\b$ is $21$.

{\bf Claim}. $\b=21$ cannot be a cyclic slope on $K$.

By Known Facts~\ref{known2} (6), if $21$ is a cyclic slope for $K$, then there is
  a Berge knot $K_0$ on which $21$ is also a cyclic slope and $K_0$ has the same Alexander polynomial
  as $K$ and thus as $[11,2;3,2]$.
  By \cite[Table~1]{Berge}, $K_0$ is not hyperbolic and thus is a torus knot or a cable over torus knot.
 From the Alexander polynomial we see that  $K_0$ has to be $[11,2;3,2]$.
 But then $21$ is not a cyclic slope for $[11,2;3,2]$ (by, e.g.  \cite[Table~1]{BH}).
 The claim is proved.

 So $\b=21$ can only possibly be  a $T$-type slope by Lemma~\ref{q}.
 Finally we note that $21$ is a $T$-type slope for $[11,2;3,2]$ (see Table~\ref{table:T}).
 Thus in this case we arrive at
 $\{43/2,21,[11,2;3,2]\}$.  Theorem~\ref{bound nonintegral} (2) is proved in this case.

{\bf  Case 2. $\a=45/2$}.

Then $\a$ is a $T$-type finite surgery slope and
$K$ has the same Alexander polynomial as $[11,2;3,2]$,
which is $$\Delta_K(t)=\Delta_{T(11,2)}(t)\Delta_{T(3,2)}(t^2).$$

If $\D(\a,\b)=3$, $\b$ is either $21$ or $24$.
If $\b=24$, then by Lemma~\ref{q}, $\b$ is of $D$-type.
But this is impossible by Table~\ref{table:D} and Lemma~\ref{different A-poly}.

When $\b=21$, it is a $T$-type slope and $\|\a\|=s+2$.
As $\D(\a,\b)=3$, we have $\|\b\|=\|\a\|$ by Lemma~\ref{q} and Theorem~\ref{norm of finite slope} (3).
By the convexity of the norm disk of any radius,
the line segment with endpoints
$(21,1)$ and $(45,2)$ is contained in the norm disk of radius $s+2$ and in particular the midpoint  $(33,\frac32)$ of the segment has norm less than or equal to $s+2$.
As  $\|(33,\frac32)\|=\frac32\|(22,1)\|\geq \frac32s$, we have $\frac32s\leq s+2$ from which we get
\begin{equation}\label{case 2, ine1}s\leq 4.
\end{equation}

The integer slope $23$ is distance $1$ from $\a$
and thus
 $\|(23,1)\|\leq s+1$ by Lemma ~\ref{s+2}. As $s\leq 4$, the norm of the point
 $(1.25,0)$ is at most $s+1$.
 Hence the line segment with endpoints $(23,1)$ and $(1.25,0)$ is contained in $B(s+1)$.
 It follows that the line segment with endpoints $(24,1)$ and $(2.25,0)$
 is contained in $B(2s+1)$.
In particular we have
\begin{equation}\label{case 2, ine2}\|(24,1)\|\leq 2s+1.
\end{equation}

On the other hand the roots of
the Alexander polynomial $\D_K(t)$  of $K$ are all simple  and are
all roots of unity. The factor $\D_{T(3,2)}(t^2)$
of  $\D_K(t)$ contains three roots  on the upper-half unit circle
in the complex plane: $e^{\pi i/3}$, $e^{\pi i/6}$, $e^{5\pi i/6}$, of
order $6$, $12$ and $12$ respectively, so
 the corresponding three distinct reducible non-abelian
 $PSL_2(\c)$ characters of $X(M)$
 factor through $M(24)$ (see Lemma~\ref{zeros from reducibles} and Remarks~\ref{distinct characters} and~\ref{non-abelian characters and norm}).
If they  are all contained in the norm  curve set $C$
then they will contribute to the norm of $(24,1)$ by at least  $6$
 beyond $s=\|\m\|$ (Lemma~\ref{zeros from reducibles} (3)).
Hence we have
\begin{equation}\label{case 2, ine3}\|(24,1)\|\geq s+6.
\end{equation}

Combining (\ref{case 2, ine2}) and (\ref{case 2, ine3}) we have
$s+6\leq 2s+1$, i.e. $s\geq 5$, which contradicts (\ref{case 2, ine1}).

Hence at least one of the above three  reducible non-abelian characters, which we denote by
$\chi_{\r_0}$, is not contained in $C$. By Lemma
\ref{zeros from reducibles} (2) and the definition of $C$ we see that  $\chi_{\r_0}$ is contained in a nontrivial semi-norm curve component $X_0$.
Now by Theorem~\ref{seminorm properties} (3) (4) we have  that the associated boundary slope
$\g_0$ of $X_0$ is an integer and $\D(\a,\g_0)=1$.
Hence $\g_0=22$ or $23$.
But $f_{\g_0}$ must be constantly zero on $X_0$
(otherwise $\tilde f_\m$ would have larger zero degree
than $\tilde f_{\g_0}$ at some point of $\tilde X_0$ which is impossible by \cite[Proposition 1.1.3]{CGLS})
and in particular is zero valued  at the non-abelian reducible character $\chi_{\r_0}$ which implies that $\g_0$  is divisible by $6$ (Remark~\ref{non-abelian characters and norm}).
We arrive at a contradiction.

So $\D(\a,\b)=1$ and $\b$ is either $22$ or $23$.
For the same reasons as given in Case 1, $22$  cannot be a finite surgery slope and $23$
cannot be a non-cyclic finite surgery slope.
So $\b=23$ is possibly a cyclic slope for $K$.
In fact, $23$ is a cyclic slope for $[11,2;3,2]$.
Hence in Case 2,  we arrive at the pair $\{45/2, 23\}$
with the sample knot $[11,2;3,2]$.

{\bf  Case 3. $\a=51/2$}.

This case can be handled very similarly  as in Case 2, and
we get the pair $\{51/2, 25\}$, where $25$ is a possible cyclic slope, with
$[13,2;3,2]$ as a sample knot.

{\bf  Case 4. $\a=53/2$}.

In this case $K$ has the same Alexander polynomial as the sample knot $[13,2;3,2]$.
When $\D(\a,\b)=3$, $\b$ is either $25$ or $28$.
By Lemma~\ref{q}, $25$
cannot be a  finite surgery slope.
 Using Lemma~\ref{q}, Table~\ref{table:D} and Lemma~\ref{different A-poly}, we can easily rule out  $\b=28$.
So $\b=26$ or $27$.
 By Lemma~\ref{q} and Lemma~\ref{DOC} (2), $\b=26$ cannot be a non-cyclic finite surgery slope
 and by Lemma~\ref{DOC} (3), $\b=26$ cannot be a cyclic surgery slope.
 Hence $\b=27$ which is a $T$-type slope for the  sample knot
 $[13,2;3,2]$. So we just need to rule out the possibility that $\b=27$ be a cyclic slope for $K$.
 This can be done by checking  that there is no Berge knot which has $27$ as cyclic slope and has the same Alexander polynomial as $[13,2;3,2]$.
 So in this case we get the member $\{53/2,27, [13,2;3,2]\}$.

{\bf  Case 5. $\a=77/2$}.

The argument is pretty much similar to that
for Case 2.
The knot $K$ has the same Alexander polynomial as $[19,2;5,2]$,
which is $$\Delta_K(t)=\Delta_{T(19,2)}(t)\Delta_{T(5,2)}(t^2).$$
In particular $\det(K)=\det(T(19,2))=19$.

When $\D(\a,\b)=3$,  $\b$ is either $37$ or $40$.
 The Alexander polynomial  of $K$ has $6$ simple roots
 provided by the  factor $\D_{T(5,2)}(t^2)$
 on the upper-half unit circle:
 $e^{k\pi i/10}, k=1,2,3,6,7,9$
 of order $10$ or $20$
 and they give rise  $6$ reducible non-abelian
 $PSL_2(\c)$ characters all of which factor through $M(40)$.
Hence $40$ cannot be a finite surgery slope for $K$.

So suppose  $\b=37$.
Both $\a$ and $\b$ are of   $I$-type.
As  $\|\a\|=s+4$,  $\|\b\|=s+4$ by Lemma~\ref{I-type}.
So the  line segment with endpoints
$(37,1)$ and $(77,2)$ is contained in $B(s+4)$ and in particular the midpoint  $(57,\frac32)$ of the segment
has norm less than or equal to $s+4$.
As $\|(57,\frac32)\|=\frac32\|(38,1)\|\geq \frac32s$,
 we have $\frac32s\leq s+4$ from which we get
$s\leq 8$.

The integer slope $39$ is distance $1$ from $\a$ and thus
 $\|(39,1)\|\leq s+2$ by Lemma~\ref{s+2}.
Hence the line segment with endpoints
$(39,1)$ and $(1.25,0)$ is contained in the norm disk of radius $s+2$.
 It follows that the line segment with endpoints $(40,1)$ and $(2.25,0)$
 is contained in $B(2s+2)$.
In particular $\|(40,1)\|\leq 2s+2$.

On the other hand
if the above $6$ reducible non-abelian characters are all  contained in the norm curve set $C$
then they would contribute to the norm of $(40,1)$ by at least  $12$
 beyond $s$, i.e.  $\|(40,1)\|\geq s+12$.
Hence combining the last two inequalities we have
$s+12\leq 2s+2$, i.e. $s\geq 10$.
We arrive at a contradiction with the early inequality  $s\leq 8$.

So at least one of the above $6$ reducible non-abelian characters is not  contained in  $C$
in which case we can get a contradiction
exactly as in Case 2.

Hence  if $\b$ is another nontrivial finite surgery slope,
then $\D(\a,\b)=1$ and $\b$ is either $38$ or $39$.
By Lemma~\ref{q} and Lemma~\ref{DOC} (2) (3), $38$ cannot be a finite surgery slope for $K$.
  If $39$ is a finite surgery slope, it cannot be non-cyclic by Lemma~\ref{q} and Table~\ref{table:T}.
It could be a cyclic slope for $K$.
In fact it is a cyclic slope of $[19,2;5,2]$ by \cite[Table~1]{BH}.
So in this case, $\a=77/2$  and $\b=39$ are the only possible finite surgery slopes
for $K$ (the former an $I$-type and the latter a $C$-type), with $[19,2;5,2]$ as a sample knot.

{\bf  Case 6. $\a=83/2$}.

This case can be treated very similarly  as in Case 5, and
 $\a=83/2$ and $\b=41$  are the only possible finite surgery  slopes for $K$
 ($\a$ an $I$-type and $\b$  a $C$-type),  with $[21,2;5,2]$ as
a sample knot.

{\bf  Case 7. $\a=103/2$}.

This is perhaps  the hardiest case.
We know that  $\a$ is of $I$-type and $K$ has the same Alexander polynomial as $[17,3;3,2]$,
which is $$\Delta_K(t)=\Delta_{T(17,3)}(t)\Delta_{T(3,2)}(t^3).$$
When $\D(\a,\b)=3$,  $\b$ is either $50$ or $53$.
By Lemma~\ref{q} and Table~\ref{table:I}, $53$ cannot be a finite surgery slope for $K$.
If $\b=50$ is a finite surgery slope, it is $O$-type by Lemma~\ref{q}.
We  have $\|\a\|= s+4$
and $\|\b\|\leq s+3$ by Theorem~\ref{norm of finite slope} (4) (6).
So the  line segment with endpoints
$(50,1)$ and $(103,2)$ is contained in the norm disk of radius $s+4$ and moreover the midpoint  $\frac 32(51,1)$ of the segment is contained in  the interior of $B(s+4)$ thus
has norm less than $s+4$.
But $\frac32\|(51,1)\|\geq \frac32s$.
So we have $\frac32s<s+4$ from which we get
$s\leq 7$.

As $K$ is fibred (Known Facts~\ref{known2} (1)), we may apply \cite[Theorem 5.3]{GO} which asserts that  there is an essential lamination in $M$  with a degenerate  slope $\g_0$ such that $M(\g)$ has an essential lamination and thus has infinite fundamental group if $\D(\g,\g_0)>1$.
  Hence  $\g_0$ must be the slope
 $51$.
  Furthermore by \cite[Theorem 2.5]{Wu} combined with the geometrization theorem of Perelman,
  $M(\g)$ is hyperbolic if $\D(\g, \g_0)>2$.
 Hence  $M(54)$ is hyperbolic.
 In particular $M(54)$ has two discrete faithful
 characters corresponding to the hyperbolic structure
 which must be contained in $C$ (by the proof of Proposition~\ref{hyperbolic larger than s+2}).
 So these two points of $C$ contribute to the norm $\|(54,1)\|$ by $4$ beyond $s$.
  Since $\D(\a,\b)=3$ and $\b$ is $O$-type,
 $\a$ cannot be $I(3)$-type (cf.  Remark~\ref{qfactor}). Similarly since $\D(\a,\m)=2$, $\a$
 cannot be $I(2)$-type. Thus $\a$ is  $I(5)$-type by Lemma~\ref{q}.
 Hence the two irreducible characters
 of $M(\a)$ factor through $M(54)$ (cf. Remark~\ref{qfactor} again), and
  these two characters are contained in $C$ by Theorem~\ref{norm of finite slope} (6b).
  Hence these two points of $C$ contribute another $4$ to the norm $\|(54,1)\|$  beyond $s$
  (cf. Remark~\ref{zeroes of factors}).

 The  Alexander polynomial
 of $K$ has $4$ simple roots of orders  divisible by $6$ (they are roots
 of the factor $\D_{T(3,2)}(t^3)$) which provide
   $4$ reducible non-abelian characters  which factor
   through $M(54)$.
Let $\chi_{\r_0}$ be the irreducible character of $M(\b)$
such that the image of $\r_0$ is the octahedra group.
By Lemma~\ref{q}, $\r_0$ also factors through $M(54)$.
If all the $4$ reducible non-abelian characters
and the $O$-type character $\chi_{\r_0}$  are contained in $C$, then $\|(54,1)\|\geq s+4+4+8+2=s+18$.
 On the other hand by Lemma~\ref{s+2}, $\|(52,1)\|\leq s+2$ from which we see that
  $\|(54,1)\|\leq 3s+2$.
 So $s+18\leq 3s+2$, i.e. $s\geq 8$, yielding a contradiction with the early conclusion $s\leq 7$.

So some of the $4$ reducible non-abelian characters or
the $O$-type character $\chi_{\r_0}$ is not contained in $C$.
If some of the $4$ reducible non-abelian characters is not contained in $C$ then we can get a contradiction similarly as in Case 2.
Thus we may suppose that the $O$-type character $\chi_{\r_0}$ is not contained in $C$ and all the $4$ reducible non-abelian characters are contained in $C$. Then the same argument as above yields
$\|(54,1)\|\geq s+16$ and $s+16\leq 3s+2$, i.e. $s\geq 7$.
Hence $s=7$.
Since $\chi_{\r_0}$ is not contained in $C$, we have $\|\b\|\leq s+1=8$ by Theorem~\ref{norm of finite slope} (4).
As $\|\a\|=\|(103,2)\|=s+4=11$,
the point $(824/11, 16/11)$ (which lies in the line segment with end points $(0,0)$ and $(103,2)$)
has norm $8$.
So the line segment  with endpoints $(50,1)$ and
$(824/11, 16/11)$ is contained in $B(8)$.
The intersection point of this line segment
with the line passing through $(0,0)$ and $(51,1)$
is $(1224/19, 24/19)$.
So $\|(1224/19, 24/19)\|\leq 8$.
But $\|(1224/19, 24/19)\|=\|\frac{24}{19}(51,1)\|
=\frac{24}{19}\|(51,1)\|\geq \frac{24}{19}s=168/19$.
So we would have $168/19\leq 8$, which is absurd.
This final contradiction shows that $50$ cannot be a finite surgery slope for $K$.

So  $\b$ is possibly $51$ or $52$.
If $51$ is a finite non-cyclic surgery slope for $K$, then it is $T$-type.
But this cannot happen from Table~\ref{table:T} and Lemma~\ref{different A-poly}.
With a similar argument as that of the claim in Case 1, $51$ cannot be a cyclic surgery slope.
So $\b$ is possibly $52$. In fact $52$ is a $D$-type surgery slope
for  $[17,3;3,2]$ (\cite[Table~1]{BH}).
From Lemma~\ref{DOC} (3), $52$ cannot be a cyclic surgery slope.
Hence in this case we have possibly
$\a=103/2$ an $I$-type, $\b=52$ a $D$-type, with $[17,3;3,2]$ as an sample knot.

{\bf  Case 8. $\a=113/2$}.
This case can be handled entirely  as Case 7, and
the only possibility is: $\a=113/2$ an $I$-type, $\b=56$ a $D$-type,
with $[19,3;3,2]$ as a sample knot.

The proof of Theorem~\ref{bound nonintegral} is finished.
%%%%%
%%%%%
%%%%%
%%%%%
%%%%%

\section{Proof of Theorem~\ref{main thm}--Part (II) and proof of Theorem~\ref{prism and det}}\label{sect:main thm-part II}

Theorem~\ref{main thm} is included in the combination of Theorem~\ref{bound nonintegral}
and the following theorem which we prove in this  section.

\begin{thm}\label{thm:integer case}Let $K$ be a hyperbolic knot in $S^3$ which does not admit
half-integer finite surgery.
\newline
(1) The distance between any two integer finite surgery slopes is at most two.
Consequently there are at most three nontrivial finite surgery slopes and
if three, they are consecutive integers.
\newline
(2) There are at most two integer non-cyclic finite surgery slopes
and all possible such pairs of slopes  are:
$$\begin{array}{l}
\{22,23, P(-2,3,9)\}, \{28,29, -K(1,1,0)\},\{50,52, [17,3;3,2]\},\\
\{56,58, [19,3;3,2]\}, \{91, 93, [23,4;3,2]\},\{99,101, [25,4;3,2]\}.\end{array}$$
Also included to each pair is a sample knot which has identical knot Floer homology
and the pair of finite surgeries as $K$.
\newline
(3) If there are three integer finite surgery slopes
on $K$, they must be the triple $(17,18,19)$ and they produce
the same spherical space forms as on the pretzel knot $P(-2,3,7)$.
Also $K$ has the same knot Floer homology as $P(-2,3,7)$.
\newline
(4)  If there are three integer finite surgery slopes
on $K$, then $K$ has to be the knot $P(-2,3,7)$.
\newline
(5) If there are two finite surgery slopes on $K$ realizing distance two,
they must be one of the following pairs:
$$\begin{array}{l}\{17,19, P(-2,3,7)\},
\{21,23,[11,2;3,2]\}, \{27,25,[13,2;3,2]\}, \{37,39,[19,2;5,2]\}, \{43,41,[21,2;5,2]\}
\\\{50,52, [17,3;3,2]\},\{56,58, [19,3;3,2]\}, \{91, 93, [23,4;3,2]\},\{99,101, [25,4;3,2]\}.\end{array}$$
Also included to each pair is a sample knot which has identical knot Floer homology
and the pair of finite surgeries as $K$.
\end{thm}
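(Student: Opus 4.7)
The plan is to combine the Casson--Walker surgery formula
\[
\lambda(S^3_K(p/q))\;=\;\lambda(L(p,q))+\frac{q}{2p}\,\Delta_K''(1)
\]
with the strong restrictions on finite surgeries recorded in Known Facts~\ref{known2}. For every integer finite surgery slope $p$ on a hyperbolic knot $K\subset S^3$, Known Facts~\ref{known2}(3)--(6) single out a sample knot $K_0$ (from Tables~\ref{table:T}--\ref{table:D} or from Berge's list) such that $S^3_K(p)\cong S^3_{K_0}(p)$ and $K$, $K_0$ share the same knot Floer homology; in particular the Alexander polynomial of $K$, and hence the number $\Delta_K''(1)$, is completely determined by the pair $(p,S^3_K(p))$. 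By Lemma~\ref{different A-poly} the sample knots that appear in these tables have pairwise distinct Alexander polynomials, so the problem is reduced to a finite combinatorial search on matching entries in the tables.

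First I would fix two integer finite surgery slopes $p_1<p_2$ on $K$ and invoke Known Facts~\ref{known1}(2) to get $p_2-p_1\in\{1,2,3\}$. For each $p_i$, I would enumerate the possible sample knots $K_0^{(i)}$ using Known Facts~\ref{known2}(3)--(6), narrowed by the type restrictions of Lemma~\ref{q} and Known Facts~\ref{known1}(4)(5)(6) (for instance, a $D$-type slope must be an integer divisible by $4$, and it cannot sit at distance $1$ from a $T$-, $O$- or $I$-type slope). Then I would impose the compatibility condition: the value of $\Delta_K''(1)$ read off at the two slopes must coincide, and the Casson--Walker formula applied to the determined manifolds $Y_{p_1}$ and $Y_{p_2}$ must yield matching expressions for $\Delta_K''(1)$. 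I expect this pairwise check to eliminate every distance-$3$ pair (establishing part~(1)) and to cut the survivors down to the six non-cyclic pairs of part~(2) and the seventeen pairs of part~(5); for the cyclic slopes appearing in~(5) the same matching is carried out against the Berge-knot description in Known Facts~\ref{known2}(6), with the bound $p\ge 2g(K)-1$ from Known Facts~\ref{known2}(2) shrinking the search further.

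Part~(3) will then be a short combinatorial consequence: three integer finite surgery slopes, pairwise at distance at most two by part~(1), must be consecutive integers $p,p+1,p+2$; the type constraints force the middle slope to be cyclic, and scanning the outer two against the pairs of part~(5) leaves only $\{17,18,19\}$, whose Floer-homology signature is that of $P(-2,3,7)$. The hard part will be part~(4), where the plan must promote this Floer-homology identification to the full equality $K=P(-2,3,7)$. Here I would lean on the two consecutive cyclic surgeries at $18$ and $19$: since $g(K)=g(P(-2,3,7))=5$ and $19\ge 4g(K)-1$, Known Facts~\ref{known2}(7) places $K$ inside Berge's list, and the prescribed Alexander polynomial together with the additional $I$-type surgery at $17$ should single out $P(-2,3,7)$ among Berge knots by direct inspection; verifying this uniqueness within Berge's classification is the step I anticipate to be the real obstacle. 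Finally, Theorem~\ref{prism and det} will be deduced by specializing the Casson--Walker formula to $S^3_K(4n)\cong P(n,m)$, computing $\lambda(P(n,m))$ from its Seifert invariants, and combining the resulting identity with the non-negativity constraints that the sequence $\{t_i\}$ of Section~\ref{sect:TOI type surgeries} imposes on $\Delta_K''(1)$, from which the bound $n<|4m|$ should fall out.
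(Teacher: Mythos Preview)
Your overall strategy---pin down the Alexander polynomial from one slope via the tables, then test the other slope for consistency using the Casson--Walker invariant---is exactly the one the paper uses.  But there are two real gaps.

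First, your enumeration scheme ``for each $p_i$, list the possible sample knots $K_0^{(i)}$ from Known Facts~\ref{known2}(3)--(6)'' breaks down for $D$-type slopes with $|p|>32$: Known Facts~\ref{known2}(5) and Table~\ref{table:D} stop at $32$, and there is no finite list to draw from beyond that.  So when you want to test, say, an $I$-type slope $\alpha$ from Table~\ref{table:I} against a potential $D$-type slope $\beta=4m$ at distance $3$ (and most such $\beta$ exceed $32$), you cannot simply look up the ``determined manifold $Y_\beta$''.  The paper closes this gap with Lemma~\ref{D-surgery and det}: the resulting prism manifold is $\pm P(n,m)$ with $n=\det(K)$, and $\det(K)$ is already fixed by the Alexander polynomial coming from the $\alpha$-side.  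Only then can one compute $\lambda(P(n,m))$ from its Seifert data via (\ref{eq:CW}) and compare with the surgery formula (\ref{eq:CWsurgery}).  You need this step explicitly; without it your ``matching expressions for $\Delta_K''(1)$'' argument has nothing to match against.

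Second, your parenthetical claim that a $D$-type slope ``cannot sit at distance $1$ from a $T$-, $O$- or $I$-type slope'' is false and not supported by Known Facts~\ref{known1}(4)(5)(6): the pair $\{28,29,-K(1,1,0)\}$ in part~(2) is precisely a $D$-type slope adjacent to an $I$-type slope.  Relatedly, for part~(1) the paper does not rule out a $T$-type slope at distance $3$ from an $O$- or $D$-type slope by table inspection alone; it uses that a $T$-type slope on a hyperbolic knot is $T(3)$-type (Lemma~\ref{q}(2)), so its irreducible $T_{12}$-character would factor through the other filling, which is impossible for $O$- or $D$-type manifolds.  Your sketch of part~(3) (``type constraints force the middle slope to be cyclic'') is also not how the argument goes: the paper splits into the cases of one versus two non-cyclic slopes among the triple and in each case checks against Berge's list.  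Your outline of part~(4) via $19\ge 4g(K)-1$ and Known Facts~\ref{known2}(7) is correct and matches the paper.
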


Of course part (4) of the theorem supersedes part (3), but to get part (4) we need to get part (3)
first.

Since $K$ is assumed to have no half-integer finite surgery slope,
 all nontrivial finite surgery slopes
of $K$ are integers and their mutual distance is at most $3$ by Known Facts~\ref{known1} (2).
Also  Known Facts~\ref{known2} (3) puts significant restrictions
 on possible  $T$-, $O$- and $I$-type  finite surgeries, and
 Known Facts~\ref{known2} (6) on cyclic surgeries.
The main issue is when a $D$-type finite surgery is involved, in which
case our method is to apply the Casson--Walker invariant.
We first make some preparation accordingly. Along the way we shall
also give a proof of Theorem \ref{prism and det}.

Let $P(n,m)$ be the prism manifold with Seifert invariants
\[(-1;(2,1),(2,1),(n,m)),\]
where $n>1$, $\gcd(n,m)=1$.
It is easy to see $P(n,-m)=-P(n,m)$, and $|H_1(P(n,m))|=|4m|$.
As noted earlier, every $D$-type spherical space form is homeomorphic to
some $P(n,m)$.

\begin{figure}[ht]
\begin{picture}(340,100)
\put(140,0){\scalebox{0.7}{\includegraphics*%[0pt,0pt][100pt,100pt]
{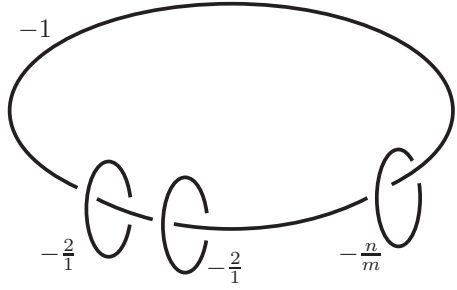}}}

\put(144,90){$-1$}

\put(152,5){$-\frac{2}{1}$}

\put(215,0){$-\frac{2}{1}$}

\put(265,5){$-\frac{n}{m}$}

\end{picture}
\caption{\label{fig:SFS}A surgery diagram of $P(n,m)$}
\end{figure}

 Given a real number $x$, let $\{x\}=x-\lfloor x\rfloor$ be the fractional part of $x$. Given a pair of coprime integers $p,q$ with $p>0$, let $\mathbf s(q,p)$ be the Dedekind sum
\[
\mathbf s(q,p)=\sum_{i=1}^{p-1}\left(\left(\frac ip\right)\right)\left(\left(\frac{iq}p\right)\right),
\]
where
\[
((x))=\left\{
\begin{array}{ll}
\{x\}-\frac12, &\text{if }x\in\mathbb R\setminus\mathbb Z,\\
0, &\text{if }x\in\mathbb Z.
\end{array}
\right.
\]
By \cite{Lescop}, the Casson--Walker invariant of $-P(n,m)$, when $m>0$, can be computed by the formula
\begin{equation}\label{eq:CW}
\lambda(-P(n,m))=-\frac1{12}\left(-\frac nm(\frac1{n^2}-\frac12)-\frac mn+3+12\mathbf s(m,n)\right).
\end{equation}

The  Casson--Walker invariant has the following surgery formula on knots in $S^3$:
\begin{equation}\label{eq:CWsurgery}
\lambda(S^3_K(p/q))=-\mathbf s(q,p)+\frac{q}{p}\Delta_K''(1).
\end{equation}
Here, the Alexander polynomial $\Delta_K(t)$ is normalized to be symmetric and
 $\Delta_K(1)=1$.

Note that the Casson--Walker invariant has the property $\l(-Y)=-\l(Y)$.
In our application, it is sufficient to use only
$|\l(Y)|$. So we do not have to worry about the orientation of the manifold involved.

\begin{lem}\label{D-surgery and det}If  a knot $K$ in $S^3$ admits a $D$-type finite surgery slope $\b$, then
 the meridian coordinate of $\b$ is an integer $4m$  and the resulting prism manifold
 is $\e P(n,m)$ for some $\e\in\{\pm\}$, where  $n$  is the determinant of the knot $K$.
 In particular if $\det(K)=1$, then $K$ does not admit $D$-type finite surgery.
\end{lem}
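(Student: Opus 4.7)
By Lemma~\ref{q}(1), any $D$-type finite surgery slope is an integer divisible by $4$, so $\beta=4m$ for some integer $m$. Every prism manifold is of the form $\varepsilon P(n,m')$ with $n>1$, $\gcd(n,m')=1$, and $|H_1(\varepsilon P(n,m'))|=4|m'|$; comparing with $|H_1(S^3_K(4m))|=4|m|$ gives $|m'|=|m|$, and choosing $\varepsilon\in\{\pm\}$ appropriately we take $m'=m$. The substance of the lemma is the identification $n=\det(K)$.

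My plan is to compute $|H_1(\widetilde Y)|$ in two ways, where $\widetilde Y$ is the unique connected $2$-fold cover of $Y=S^3_K(4m)=\varepsilon P(n,m)$ associated to the mod-$2$ quotient $H_1(Y)=\mathbb Z/4m\to\mathbb Z/2$. From the surgery description, the mod-$2$ cover of $M=S^3\setminus N(K)$ is $\Sigma_2(K)\setminus N(\widetilde K)$, where $\Sigma_2(K)$ is the $2$-fold cyclic branched cover of $S^3$ along $K$ and $\widetilde K$ is the connected branch locus. Tracking the boundary tori, $\pi_*\mu_{\widetilde K}=\mu^2$ and $\pi_*\lambda_{\widetilde K}=\lambda$, so the attaching slope $\mu^{4m}\lambda=(\mu^2)^{2m}\lambda$ lifts to $2m\,\mu_{\widetilde K}+\lambda_{\widetilde K}$, identifying $\widetilde Y$ with the $2m$-surgery on $\widetilde K\subset\Sigma_2(K)$ in the framing given by $\lambda_{\widetilde K}$. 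This framing is the Seifert framing of $\widetilde K$ because a Seifert surface for $K$ lifts to yield one for $\widetilde K$ after cutting along the branch locus. Since $\det(K)=|H_1(\Sigma_2(K))|$ is odd and $[\widetilde K]$ is $2$-torsion, $\widetilde K$ is null-homologous in $\Sigma_2(K)$; a short Mayer--Vietoris calculation then gives $|H_1(\widetilde Y)|=2m\det(K)$.

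From the prism description, a direct abelianization of the standard Seifert presentation of $\pi_1(P(n,m))$ coming from the invariants $(-1;(2,1),(2,1),(n,m))$ shows that $H_1(P(n,m))$ is cyclic of order $4m$ exactly when $n$ is odd (for $n$ even a noncyclic $2$-torsion summand appears). So the cyclicity of $H_1(S^3_K(4m))$ already forces $n$ to be odd. The regular fiber $h$ satisfies $h=-2x_1$ in $H_1$ and so lies in the kernel of the mod-$2$ quotient; the cover therefore preserves the Seifert fibration and descends to the unique connected double cover $S^2(n,n)\to S^2(2,2,n)$ of the base orbifold (unique because $D_n^{\mathrm{ab}}=\mathbb Z/2$ for $n$ odd). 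The resulting $3$-manifold is Seifert fibered over $S^2(n,n)$ with cyclic orbifold fundamental group $\mathbb Z/n$ and fiber of order $2m$, so $\pi_1(\widetilde Y)$ is a central extension of $\mathbb Z/n$ by $\mathbb Z/2m$. Since $\gcd(n,2m)=1$ (as $n$ is odd and $\gcd(n,m)=1$), Schur--Zassenhaus forces $\pi_1(\widetilde Y)\cong\mathbb Z/(2mn)$, and in particular $|H_1(\widetilde Y)|=2mn$.

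Equating the two counts, $2m\det(K)=2mn$, gives $n=\det(K)$. The final assertion is then immediate: $\det(K)=1$ would force $n=1$, contradicting $n>1$ in the definition of $P(n,m)$. The step I expect to be most delicate is the slope-lift analysis on the knot side: one must fix conventions carefully enough to identify $\lambda_{\widetilde K}$ with the Seifert framing of $\widetilde K$ in $\Sigma_2(K)$, and to verify that the attaching slope $\mu^{4m}\lambda$ lifts to precisely $2m\,\mu_{\widetilde K}+\lambda_{\widetilde K}$, so that the resulting surgery coefficient on $\widetilde K$ is $2m$ rather than some other divisor of $4m$.
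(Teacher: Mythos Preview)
Your proof is correct in substance and takes a genuinely different route from the paper's. The paper argues by counting dihedral $PSL_2(\mathbb C)$ representations: by Klassen's theorem the knot group has exactly $(\det(K)-1)/2$ of them, by a result of Abdelghani--Boyer the prism manifold $\varepsilon P(n,m)$ has exactly $(n-1)/2$, and since every dihedral representation of the knot group kills any slope with even meridian coordinate (so that the two sets of representations coincide), one reads off $n=\det(K)$. Your approach instead matches $|H_1|$ of the unique connected double cover, computed on the knot side as $2|m|\cdot|H_1(\Sigma_2(K))|=2|m|\det(K)$ via the branched-cover surgery description, and on the prism side as $2|m|n$ via the lifted Seifert structure. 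The two arguments are cousins at root---dihedral representations of a knot group are governed precisely by $H_1(\Sigma_2(K))$---but yours is more self-contained and avoids the representation-theoretic black boxes, at the cost of a longer verification on the Seifert-fibered side.

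One correction: you invoke Lemma~\ref{q}(1) to conclude that $\beta$ is an \emph{integer} slope, but that lemma is stated only for hyperbolic knots, whereas the present lemma concerns arbitrary $K$ (and is indeed applied in the paper with $q>1$). The repair is immediate: all you actually need is that the \emph{meridian coordinate} of $\beta$ equals $4m$, which follows from $|H_1(\varepsilon P(n,m'))|=4|m'|$. Writing $\beta=4m/q$ with $\gcd(4m,q)=1$, the same covering analysis shows the filling slope lifts to $2m\,\mu_{\widetilde K}+q\,\lambda_{\widetilde K}$ (still primitive since $\gcd(2m,q)=1$), and for a null-homologous knot in a rational homology sphere the $(2m/q)$-surgery has $|H_1|=2|m|\cdot|H_1(\Sigma_2(K))|$ regardless of $q$, so the rest of your argument goes through unchanged.
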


\pf This lemma is just a refinement of Lemma~\ref{q} (1) and Lemma~\ref{DOC} (1).
 Note that $|H_1(\e P(n,m))|=|4m|$. So we just need to show that $n=\det(K)$.
   \cite[Theorem 10]{K} says that for any knot $K$ in $S^3$, its knot group has precisely
$(\det(K)-1)/2$ distinct $PSL_2(\c)$ dihedral representations, modulo conjugation,
and moreover any such representation will kill any slope
with even meridian coordinate.
On the other hand for a prism manifold $\e P(n,m)$, it has precisely
$(n-1)/2$ distinct $PSL_2(\c)$ dihedral representations, modulo conjugation
(\cite[Proposition D]{AB}).
As  a $D$-type finite surgery on a knot in $S^3$ is actually $D(2)$-type
(cf. Lemma~\ref{q} (1)), the set of dihedral representations of
any $D$-type surgery manifold on a knot in $S^3$ is precisely the set
of dihedral  representations of the knot group.
The conclusion of the lemma follows. \qed

\begin{proof}[{\bf Proof of Theorem~\ref{prism and det}}]
By Lemma~\ref{D-surgery and det}, the surgery slope  is $4m/q$, $(4m,q)=1$,
and $n=\det(K)$. Up to reversing the orientation of $P(n,m)$, we may assume
$m$ is positive and up to replacing $K$ by its mirror image, we may assume
that $q>0$.
So by Known Facts~\ref{known2} (2), $4m/q>2g(K)-1$.
Also by Known Facts~\ref{known2} (2), the nonzero coefficients of the Alexander polynomial of
$K$ are all $\pm 1$ and by Known Facts~\ref{known2} (1), the knot $K$ is fibred
and thus the degree of the Alexander polynomial is $2g(K)$,  which together
imply that \[n=\det(K)=|\D_K(-1)|\leq 2g(K)+1<\frac{4m}{q}+2\leq 4m+2.\]

If $n=4m+1$, then $g(K)=2m$, $q=1$  and
\[
\Delta_K(t)=1+\sum_{i=1}^{2m}(-1)^i(t^{i}+t^{-i}).
\]
It follows that $\Delta''_K(1)=4m^2+2m$. Since
\begin{equation}\label{eq:s14m}
\mathbf s(1,4m)=\sum_{i=1}^{4m-1}\left(\frac{i}{4m}-\frac12\right)^2=\frac{8m^2-6m+1}{24m},
\end{equation}
using (\ref{eq:CWsurgery}), we get
\[
\lambda(S^3_K(4m))=\frac{16m^2+18m-1}{24m}.
\]

On the other hand, we have
\begin{eqnarray*}
\mathbf s(m,4m+1)&=&\sum_{i=1}^{4m}\left(\left(\frac{i}{4m+1}\right)\right)\left(\left(\frac{im}{4m+1}\right)\right)\\
&=&\sum_{i=1}^{4m}\left(\left(\frac{4i}{4m+1}\right)\right)\left(\left(\frac{-i}{4m+1}\right)\right)\\
&=&\sum_{k=0}^3\sum_{j=1}^{m}\left(\left(\frac{4(km+j)}{4m+1}\right)\right)\left(\left(\frac{-km-j}{4m+1}\right)\right)\\
&=&\sum_{k=0}^3\sum_{j=1}^{m}\left(\frac{4j-k}{4m+1}-\frac12\right)\left(\frac{(4-k)m+1-j}{4m+1}-\frac12\right)\\
&=&\frac{4m-m^2}{12m+3}.
\end{eqnarray*}
So it follows from (\ref{eq:CW}) that
\[
\lambda(-P(4m+1,m))=\frac{2m^2-18m+1}{24m}.
\]
Thus $\lambda(S^3_K(4m))\ne\pm\lambda(-P(4m+1,m))$ for any positive integer $m$.
We get a contradiction. This shows $n\ne4m+1$. Since $n=\det(K)$ is odd, we must have $n<4m$.
\end{proof}

We are now ready to prove Theorem~\ref{thm:integer case}.
Suppose $\a$ and $\b$ are two integer  finite surgery slopes on $K$.
To prove part (1) of the theorem we only need to rule out the possibility of $\D(\a,\b)=3$.
So suppose that $\D(\a,\b)=3$. Then one of $\a$ and $\b$, say $\a$, is
an odd integer and the other, $\b$, is an even integer.
We know from Known Facts~\ref{known1} (3) that neither $\a$ nor $\b$
can be $C$-type.
So by Lemma~\ref{q}, $\a$ is of $I$-type or $T$-type and
$\b$ is of $O$-type or $D$-type.
In fact $\a$ cannot be of $T$-type for otherwise
by Lemma~\ref{q} (2) $\a$ is of $T(3)$-type which means, since $\D(\a,\b)=3$,
that  the irreducible representation of $M(\a)$ with image $T_{12}$
also factors through $M(\b)$, i.e.
$M(\b)$ has an irreducible representation with image $T_{12}$.
But this is impossible since any $O$-type or $D$-type spherical space form does not
have such representation.
So $\a$ is of $I$-type.
Now from Tables~\ref{table:O} and~\ref{table:I} one can check quickly
that there is  no sample knot which admits an integer $I$-type surgery and an integer $O$-type surgery, distance $3$ apart (one just need to check for those sample  knots in Table~\ref{table:I} with $\det(K)=3$
and there are only $7$ of them).
Hence by Lemma~\ref{different A-poly} and Known Facts~\ref{known2} (3)
there is no knot in $S^3$  which admits an integer $I$-type surgery and an
integer $O$-type surgery, distance $3$ apart.
So $\b$ is a $D$-type slope.

So we have $\a$ is of $I$-type, $\b$ is of $D$-type and they are distance $3$ apart.
To rule out this case, we shall apply the Casson--Walker invariant.
By Known Facts~\ref{known2} (3), $\a$ is one of the slopes
given in Table~\ref{table:I} and $K$ has the same
Alexander polynomial (in particular the same determinant) as the corresponding sample knot.
We only need to consider those slopes in the table
whose sample knots have determinant larger than $1$.
We  may express such a slope as  $\a=4m+3$ or $4m-3$ for some integer $m>0$.
So we just need to show  that $S^3_K(4m)$ is not a prism manifold.
To do this, we compute $\lambda(S^3_K(4m))$ using (\ref{eq:CWsurgery}) and
compute $\lambda(-P(n,m))$ for $n=\det(K)$ using (\ref{eq:CW}),
and check whether $|\lambda(S^3_K(4m))|$ is equal to $|\lambda(-P(n,m))|$.
Also note that by Known Facts~\ref{known2} (5)
when $4m\le32$, $K$ must  have the same Alexander polynomial as  a corresponding sample knot given in Table~\ref{table:D}.
This finite process of computation shows that the only possible case is when $m=1$ and the corresponding sample knot
is $T(3,2)$. It follows from \cite{OSz3141} that $K$ must be $T(3,2)$, contradicting the assumption that $K$ is hyperbolic.
Part (1) of the theorem is proved.

Part (2) is treated  with a similar  strategy.
Suppose $\a$ and $\b$ are two distinct integer non-cyclic finite surgery slopes of $K$.
We are going to show that $(\a,\b)$ must be one of the pairs listed in part (3) with the
corresponding sample knot playing the said role, and that there cannot be the third non-cyclic
finite surgery on $K$.
By part (1), $\D(\a,\b)=2$ or $1$.

Let us first consider the case when $\D(\a,\b)=2$.
Then $\a$ and $\b$ are both odd or both even integers.
If they are both odd, then each of $\a$ and $\b$ is of $T$-type or $I$-type by Lemma~\ref{q}.
Then by Known Facts~\ref{known2} (3) and Lemma~\ref{different A-poly}, we just need to check
which sample knots in Table~\ref{table:T} and Table~\ref{table:I}  have
two slopes listed in  these tables distance two apart.
There are only three such instances:
$$\{1,3, T(3,2)\}, \{91, 93, [23,4;3,2]\}, \{99,101, [25,4;3,2]\}.$$
The first instance can be excluded due to \cite{OSz3141}.
In the second instance, we just need to show that $92$ cannot be a finite non-cyclic surgery
slope for the same knot $K$. Suppose otherwise that $92$ is a non-cyclic finite surgery slope for $K$.
Then it must be a $D$-type surgery slope by Lemma~\ref{q}. But by Lemma~\ref{D-surgery and det},
 the resulting prism manifold would be $\e P(23,23)$, which does not make sense
  since $23$ and $23$ are not relative prime integers.
Thus $92$ cannot be a $D$-type slope for $K$.
The third instance can be treated exactly as in the second one.

If both $\a$ and $\b$  are even, then each of $\a$ and $\b$ is of $O$-type or $D$-type by Lemma~\ref{q}.
Note that  $\a$ and $\b$ cannot both be $D$-type by Known Facts~\ref{known1} (4).
From Known Facts~\ref{known2} (3) and Table~\ref{table:O},
 we see that $\a$ and $\b$ cannot both be $O$-type.
 So we may assume that $\a$ is an $O$-type slope and $\b$  a $D$-type slope.
Each slope $\a$ in Table~\ref{table:O} can be expressed as $4m+2$. If
$4m$ or $4m+4$ is a $D$-type slope for $K$, then we should have
 $|\l(P(3,m))|=|S^3_K(4m)|$ or $|\l(P(3,m+1))|=|S^3_K(4m+4)|$ respectively.
 Calculation using (\ref{eq:CW}) and (\ref{eq:CWsurgery})
shows that this happens only in
three instances: $$\{2,4, T(3,2)\}, \{50,52, [17,3;3,2]\}, \{56,58, [19,3;3,2]\}.$$
Again the first instance cannot happen for a hyperbolic knot due to \cite{OSz3141}.
In the second instance  $52$ is indeed a $D$-type slope for $[17,3;3,2]$, and one can easily
rule out the possibility for $51$ to be a non-cyclic finite surgery slope.
The third instance can be treated exactly as in the second one.

Next we  consider the case when $\D(\a,\b)=1$.
As in part (1), we may assume $\a$ is a $T$-type or $I$-type slope
and $\b$ an $O$-type or $D$-type slope.
With a similar process as used in part (1),
 we only obtain the following instances (with the trefoil case  excluded):
  $$\{7, 8, T(5,2)\}, \{12, 13, T(5,2)\},
 \{28,29, -K(1,1,0)\},\{22,23, P(-2,3,9)\}.$$
We note that each pair of slopes do realize on the sample
knot as non-cyclic finite surgery slopes
and by the result obtained in the preceding two  paragraphs, there is no third non-cyclic finite surgery
in each  instance for the same hyperbolic knot $K$.
The first two instances with the sample knot $T(5,2)$ can be rule out 
by Theorem \ref{thm:DiSurg}.
Part (2) of the theorem is proved.

To prove part (3), suppose that $K$ has three integer finite surgery slopes.
They are consecutive integers by part (1).
At least one of them  is non-cyclic by Known Facts~\ref{known1} (1)
and at most two of them are non-cyclic by part (2).

If two of them are non-cyclic, then
the two slopes must be one the pairs listed in
 part (2) of the theorem with the corresponding sample knot.
The case of  $\{7, 8, T(5,2)\}$ cannot happen since any hyperbolic knot
cannot have a cyclic surgery slope $6$ or $9$ by
Known Facts~\ref{known2} (7).
In each of other cases the same hyperbolic knot $K$ can no longer have a cyclic
surgery slope.
For if it does, then by Known Facts~\ref{known2} (6) there will be a Berge knot
having the same cyclic slope and same Alexander polynomial
as the corresponding sample knot attached to the pair of non-cyclic
finite surgery slopes.
But one can check (which is a finite process) that there does not exist
such Berge knot.

So we may assume that there is exactly one non-cyclic finite slope and two
cyclic surgery slopes on  $K$.
By Known Facts~\ref{known1} (4), the non-cyclic finite surgery slope $\a$
cannot be $D$-type or $O$-type.
So $\a$ is a $T$-type or $I$-type surgery slope belonging to
Table~\ref{table:T} or Table~\ref{table:I}.
In particular it is a positive integer less than or equal to $221$.
Also the determinant of $K$ is one by Lemma~\ref{DOC} (3)
and so there are only $14$ possible value for $\a$.
In each of the $14$ cases we check
that there is no Berge knot $K_0$ such that $K_0$ admits two integer
cyclic surgery slopes which form consecutive integers with $\a$
and that $K_0$ has the same Alexander polynomial as the sample knot attached to
the slope $\a$
in Table~\ref{table:T} or Table~\ref{table:I}, except
the case when $\a=17$.
In fact we have

\begin{lem}
If for some positive integer $p\leq 222$, $p$ and $p+1$ are cyclic surgery slopes for a
nontrivial Berge Knot
 then $p$ is one of the $10$ values: 18, 30, 31, 67, 79, 116, 128, 165, 177, 214.
 If for such $p$,   $p$, $p+1$ and a slope $\a$ from  Table~\ref{table:T} or Table~\ref{table:I}
form consecutive integers, then the  corresponding Berge knot and the corresponding sample knot associated to $\a$
have different Alexander polynomials, except when $\a=17$ ($p=18$).
\end{lem}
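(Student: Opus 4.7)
The proof is essentially a finite enumeration based on Berge's classification, so my plan is to set up that enumeration cleanly and then compare Alexander polynomials in the handful of cases where a collision is possible.

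First, I would invoke Berge's list of primitive/primitive knots, which organizes all knots in $S^3$ admitting a nontrivial cyclic surgery into a finite number of parametrized families (twelve, in Berge's original list \cite{Berge}), each with an explicit formula expressing the cyclic surgery slope as a polynomial in the family's integer parameters. By Known Facts~\ref{known1}~(1), if a Berge knot admits two cyclic surgery slopes they must be consecutive integers, so we are searching for a single knot whose slope formula can take two values $p$ and $p+1$ simultaneously. Within each family this amounts to solving an explicit Diophantine equation in the parameters, and across families it amounts to identifying knots that appear in more than one family.

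Second, imposing $p\leq 222$ cuts the parameter space to a finite bounded region, so the resulting enumeration can be carried out by direct case analysis (or by a short computer search). The ten listed values $p\in\{18,30,31,67,79,116,128,165,177,214\}$ are precisely the solutions that arise; in particular the pretzel knot $P(-2,3,7)$ accounts for $p=18$, and the remaining nine come from other Berge families.

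Third, for the second assertion, I would observe that $p$, $p+1$, and $\alpha$ form three consecutive integers only when $\alpha=p-1$ or $\alpha=p+2$. Cross-referencing the resulting set $\{17,20,29,32,30,33,66,69,78,81,115,118,127,130,164,167,176,179,213,216\}$ against the integer slopes appearing in Tables~\ref{table:T} and~\ref{table:I} leaves at most the four candidates $\alpha\in\{17,29,69,81\}$. For each of these I would then compute the Alexander polynomial of the Berge knot realizing the corresponding $p$ (using the knot's primitive/primitive surgery description, or equivalently the formula for the Alexander polynomial of its dual simple knot in the lens space $L(p,1)$, cf.\ \cite{RasBerge}) and compare it with the Alexander polynomial of the sample knot attached to $\alpha$ in the appropriate table. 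By Known Facts~\ref{known2}~(1)--(2) and the genus formulas in the tables, if the genera already differ we are done; otherwise one compares $\det(K)$ and $\Delta''_K(1)$, much as in the proof of Lemma~\ref{different A-poly}. The case $\alpha=17$, $p=18$ is the genuine coincidence realized by $P(-2,3,7)$ itself; the other three cases can be ruled out by a polynomial comparison.

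The main obstacle is bookkeeping: Berge's families are many, and extracting the precise Alexander polynomial of a Berge knot from its primitive/primitive description (rather than from its surgery dual) is not fully automatic. However, once one works knot-by-knot through the (at most) nine candidate Berge knots and the three candidate non-trivial comparisons, the verification is mechanical, and no new geometric input is needed beyond the results already cited.
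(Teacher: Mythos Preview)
Your outline is essentially sound, but it takes a different route from the paper's. The paper does not enumerate Berge's twelve families directly; instead it runs a Mathematica check of Condition~\ref{cond:Corr} on lens spaces: for each $p\le222$ and each pair $q_1,q_2$, it tests whether both $L(p,q_1)$ and $L(p+1,q_2)$ satisfy the correction-term condition for being an integer surgery on a knot in $S^3$. This is a necessary condition for \emph{any} knot to have $p$ and $p+1$ as cyclic slopes, and it already cuts the list down to the ten values. The paper then observes that for each of these ten values the Alexander polynomial is uniquely determined by the correction-term data and is realized by an explicit Eudave-Mu\~noz knot $k(2,2,n,0)$ (slopes $49n-18,\,49n-19$) or $k(2,-1,2,0)$ (slopes $30,31$); the final comparison with the sample knots in Tables~\ref{table:T} and~\ref{table:I} is then a comparison of already-computed Alexander polynomials.

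Your direct enumeration through Berge's families is conceptually more transparent, but the step you flag as ``identifying knots that appear in more than one family'' is exactly where the difficulty concentrates: deciding when two primitive/primitive descriptions yield the same knot in $S^3$ is not automatic, and in practice one ends up passing to an invariant such as the Alexander polynomial anyway --- at which point one is effectively re-deriving the correction-term check. The paper's approach buys a clean reduction to a finite numerical computation on lens spaces with no need to match knots across families. Your reduction of the second assertion to the four candidates $\alpha\in\{17,29,69,81\}$ is correct, and from there either approach finishes the comparison the same way.
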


\pf
We use a Mathematica program to check the following fact: if for some positive integer $p\leq222$ and integers $q_1,q_2$, both $L(p,q_1)$ and $L(p+1,q_2)$ satisfy Condition~\ref{cond:Corr}, then $p$ is one of the $10$ values in the lemma. For each of these values of $p$, there is only one possible Alexander polynomial for the corresponding knot, which can be realized by a Berge knot. In fact, for each integer $n$, the Eudave-Mu\~noz knot $k(2,2,n,0)$ has two lens space surgeries with slopes $49n-18$
and $49n-19$, and the knot $k(2,-1,2,0)$ has two  lens space surgeries with slopes $30,31$ \cite{EM1}.
Moreover, the Alexander polynomials of these  knots are different from those  of knots in Table~\ref{table:T} or Table~\ref{table:I} except when $p=18$.
\qed

When $\a=17$, the sample knot is $P(-2,3,7)$ which does have
$18$ and $19$ as cyclic surgery slopes.
Part (3) of the theorem is proved.

Based on part (3), we can quickly prove part (4).
If $K$ admits three nontrivial finite surgery, then by part (3) the surgery slopes
are the triple $17,18,19$, with $17$ an $I$-type and $18$, $19$ $C$-type slopes,
 and $K$ has the same knot Floer homology as  $P(-2,3,7)$.
 In particular $K$ has genus $5$.
 Now applying Known Facts \ref{known2} (7) to the cyclic slope $19$,
 we see that $K$ is a Berge knot.
But among all hyperbolic Berge knots, $P(-2,3,7)$ is the only one
which admits cyclic slope $18$ or $19$. This last assertion follows from
Lemma 1, Theorem 3 and Table of Lens Spaces of \cite{Berge}.

To prove part (5), assume that
$\a$ and $\b$ are two integer finite surgery slopes for $K$ with $\D(\a,\b)=2$.
If both $\a$ and $\b$ are non-cyclic, then by part (2)
they are one of the pairs
$$\{50,52, [17,3;3,2]\}, \{56,58, [19,3;3,2]\},\{91, 93, [23,4;3,2]\}, \{99,101, [25,4;3,2]\}.$$
So we may assume
that exactly one of them, say $\a$, is non-cyclic  by Known Facts~\ref{known1} (1)
and $\a$ must be a $T$-type or $I$-type slope
 by Known Facts~\ref{known1} (4) (5).
So  $\a$
is one of the slopes  in Table~\ref{table:T} or Table~\ref{table:I} and $K$ has
the same Alexander polynomial as the corresponding sample knot associated  to $\a$.
Again in this situation we only need to check the following lemma.

\begin{lem}
If there exists a hyperbolic knot $K$
such that $K$ admits a cyclic surgery slope $p$, $p\leq 223$,
which is distance two from a slope $\a$ in Table~\ref{table:T} or Table~\ref{table:I}
and that $K$ has the same Alexander polynomial
as the sample knot attached to $\a$ in Table~\ref{table:T} or Table~\ref{table:I},
then $\a,p$ are one of the pairs $$\{17,19, P(-2,3,7)\},
\{21,23,[11,2;3,2]\}, \{27,25,[13,2;3,2]\}, \{37,39,[19,2;5,2]\}, \{43,41,[21,2;5,2]\}$$
and each pair is realized on the attached sample knot.
\end{lem}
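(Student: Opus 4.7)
The approach mirrors the computation used in the preceding lemma. By Known Facts~\ref{known2} (6), if $p$ is a cyclic surgery slope on any hyperbolic knot $K$ as in the hypothesis, then there exists a Berge knot $K_0'$ with $S^3_{K_0'}(p) \cong S^3_K(p)$ and $\Delta_{K_0'} = \Delta_K = \Delta_{K_0}$, where $K_0$ is the sample knot attached to $\a$. Thus the task reduces to identifying, for each slope $\a$ listed in Tables~\ref{table:T}--\ref{table:I}, which of the two nearby values $p = \a \pm 2$ with $p \leq 223$ can actually arise as the cyclic surgery slope on a knot having the same Alexander polynomial as $K_0$. I would begin by enumerating these candidate pairs $(\a, p)$: for each of the nine entries in Table~\ref{table:T} and the thirty-three entries in Table~\ref{table:I}, at most two values of $p$ need to be considered, and Known Facts~\ref{known2} (7) immediately eliminates any $p < 18$ with $p \ne 14$.

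For every surviving pair, the essential test is Condition~\ref{cond:Corr}. The normalized Alexander polynomial of $K_0$ (recorded explicitly via the genus $g$ in the tables, combined with the formulas for cable, pretzel, and twist torus knots) determines the sequence $\{t_i\}$ uniquely, and the correction terms $d(L(p,q), i)$ are computed inductively from (\ref{eq:CorrRecurs}) for each $q$ with $1 \leq q < p$ and $\gcd(p,q) = 1$. One then checks, via a short program of the same kind used in the proofs of the preceding lemmas, whether for some such $q$ there is a symmetric affine isomorphism $\phi \co \z/p\z \to \z/p\z$ satisfying
\[
d(L(p,q), i) - 2\, t_{\min\{\lfloor i/q \rfloor,\, \lfloor (p+q-i-1)/q \rfloor\}} \;=\; d(L(p,q'),\phi(i))
\]
for some $q'$. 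Since $p \leq 223$ and the $t_i$'s are given explicitly, this is a routine finite check; it will eliminate every candidate pair except the five listed in the statement. For the realizability of each surviving pair on its attached sample knot, one invokes the Moser--Bailey--Hayashi classification of finite surgeries on torus knots and their cables \cite{Moser, BH} (for the four cable-knot pairs) and the well-known list of exceptional surgeries on $P(-2,3,7)$ (for the pair $\{17, 19\}$) to read off that both slopes arise as required.

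The main obstacle is organizational rather than conceptual: one must run through every entry of Tables~\ref{table:T} and~\ref{table:I} without omission and keep careful track of the two-sided candidates $\a \pm 2$. A convenient preliminary filter is $\det(K_0) = |\Delta_K(-1)|$, tabulated explicitly in these tables; cyclic surgery on a Berge knot is constrained by $\det = 1$, whereas many of the candidate sample knots have $\det(K_0) \in \{3, 9, 11, \dots\}$, instantly eliminating the corresponding $p$ from the Berge list. Once these bookkeeping steps are made, the verification proceeds mechanically, and I expect no further pairs to survive beyond the five claimed.
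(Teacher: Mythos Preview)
Your core approach---checking Condition~\ref{cond:Corr} computationally for each candidate pair $(\a, p)$ with $p = \a \pm 2$---is essentially the same as the paper's, and is correct.

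However, your proposed ``preliminary filter'' via the determinant is wrong and, if applied, would eliminate valid answers. You assert that ``cyclic surgery on a Berge knot is constrained by $\det = 1$,'' but this is not a general fact. The relevant constraint is Lemma~\ref{DOC}~(3): a cyclic surgery slope with \emph{even} meridian coordinate forces $\det(K) = 1$. Here $\a$ is an odd integer (every $T$- or $I$-type integral slope in Tables~\ref{table:T} and~\ref{table:I} is odd, cf.\ Lemma~\ref{q}), so $p = \a \pm 2$ is also odd, and no determinant restriction applies. Indeed, four of the five surviving pairs have sample knots with $\det(K_0) \in \{11, 13, 19, 21\}$; your filter would wrongly discard all of them.

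Apart from this, the paper's proof also explicitly notes that the computer check produces some additional pairs with sample knots $T(3,2)$ or $T(5,2)$, which are then excluded because $K$ is hyperbolic (via \cite{OSz3141} and Known Facts~\ref{known2}~(7)). You should anticipate and handle these cases as well, rather than expect only the five listed pairs to emerge directly from the Condition~\ref{cond:Corr} check.
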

\pf
Again, this is proved by using a Mathematica program to check Condition~\ref{cond:Corr} for
each $\a$-surgery in Table~\ref{table:T} or Table~\ref{table:I} and a lens space $L(p,q)$
with $\D(\a,p)=2$.
We get all such pairs $\a,p$ satisfying Condition~\ref{cond:Corr} along with the recovered
Alexander polynomials,  which yield
corresponding sample knots in Table~\ref{table:T} or Table~\ref{table:I}.
Such sample knot  is either a torus knot ($T(3,2)$ or $T(5,2)$) or an iterated torus knot listed
in the lemma or $P(-2,3,7)$. The case of torus knots can be ruled out by \cite{OSz3141} and Known Facts \ref{known2} (7).
\qed

Part (5) of the theorem is proved.

%%%%%
%%%%%
%%%%%
%%%%%
%%%%%

\section{Proof of Theorem~\ref{thm:DiSurg}}\label{sect:DiSurg}

We first consider the case of Theorem~\ref{thm:DiSurg} when $K$ is non-hyperbolic. This case follows easily from existing results.
In fact, by Thurston's Geometrization theorem, if $K$ is not hyperbolic, then $K$ is either a torus knot or a satellite knot.
The classification of surgeries on torus knots is carried out in \cite{Moser}. By
\cite[Corollary~1.4]{BZ1}, if a satellite knot admits a finite surgery, then this knot is a cable of a torus knot. Finite surgeries on such cable knots are classified in \cite[Theorem~7]{BH}. From these classification results, one can readily check that $T(2m+1,2)$ is the only non-hyperbolic knot in $S^3$ admitting a surgery to $\varepsilon S^3_{T(2m+1,2)}(4n)$ with
slope $4n$ (easy to see this among torus knots, and on cables of torus knots see \cite[Table~1]{BH}).

From now on, we assume $K$ is hyperbolic.
We first get an estimate on the genus of $K$ applying the correction terms from  Heegaard Floer homology.
In  our current situation
 the correction terms of $S^3_{K}(4n)$ are given by the formula
\begin{equation}\label{eq:CorrSurg}
d(S^3_{K}(4n),i)=-\frac14+\frac{(2n-i)^2}{4n}-2t_{\min\{i,4n-i\}}(K), \;\;i=0,1,2,\cdots, 4n-1.
\end{equation}

For the torus knot $T(2m+1,2)$, the coefficients of its normalized Alexander polynomial are
$$a_i=\left\{
\begin{array}{ll}
(-1)^{m-i}, &\text{if }|i|\le m,\\
0, &\text{otherwise.}
\end{array}
\right.$$
So
\begin{equation}
\begin{array}{ll}t_i(T(2m+1,2))&=\left\{\begin{array}{ll}\sum_{j=1}^{m-i}(-1)^{m-i-j}j, &\text{for $0\leq i<m$}\\
0, &\text{for $i\geq m$}\end{array}\right.\\&
=\left\{\begin{array}{ll}\lceil\frac{m-i}2\rceil, &\text{for $0\leq i<m$}\\
0, &\text{for $i\geq m$.}\end{array}\right.\end{array}
\end{equation}

For any $k\in\mathbb Z$, let $\theta(k)\in\{0,1\}$ be the reduction of $k$ modulo $2$. Let $\zeta=n-m\in\{0,1\}$.
Applying (\ref{eq:CorrSurg}) to $T(2m+1,2)$, we can compute that
\begin{equation}\label{eq:CorrT}
d(S^3_{T(2m+1,2)}(4n),i)=\left\{
\begin{array}{ll}
-\frac14+\frac{i^2}{4n}+\zeta-\theta(n-\zeta-i),&\text{if }0\le i< n,\\
-\frac14+\frac{(2n-i)^2}{4n}, &\text{if }n\le i\le 2n,\\
d(S^3_{T(2m+1,2)}(4n),4n-i), &\text{if }2n< i< 4n.
\end{array}
\right.
\end{equation}

\begin{prop}\label{prop:GenusBound}
If $S^3_K(4n)\cong \e S^3_{T(2m+1,2)}(4n)$, then $\varepsilon=+$, and $g(K)$, the Seifert genus of $K$, is less than or equal to $n$.
\end{prop}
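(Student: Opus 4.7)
The plan is to extract information from the correction-term identity
\[
d(S^3_K(4n),\phi(i))=\varepsilon\cdot d(S^3_{T(2m+1,2)}(4n),i), \qquad i\in\mathbb Z/4n\mathbb Z,
\]
coming from the assumed homeomorphism, where $\phi$ is a symmetric affine automorphism of $\mathbb Z/4n\mathbb Z$. Writing $\zeta:=n-m\in\{0,1\}$ and inspecting the explicit formula (\ref{eq:CorrT}) case by case, I would first locate the maximum
\[
M:=\max_{i}d(S^3_{T(2m+1,2)}(4n),i)=
\begin{cases}
-\tfrac14+\tfrac{n}{4},& \zeta=0,\text{ attained only at }i\in\{n,3n\},\\[3pt]
-\tfrac14+\tfrac{(n+1)^2}{4n},& \zeta=1,\text{ attained only at }i\in\{n-1,3n+1\},
\end{cases}
\]
together with the coarse lower bound $\min_{i}d(S^3_{T(2m+1,2)}(4n),i)\ge-\tfrac54$ read off from the same formula.

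To rule out $\varepsilon=-$, I would observe that otherwise $\max_i d(S^3_K(4n),i)=-\min_i d(S^3_{T(2m+1,2)}(4n),i)\le\tfrac54$. On the other hand, the surgery slope bound $g(K)\le 2n$ from Known Facts~\ref{known2}(2) guarantees Spin$^c$ structures on which the $t$-term in (\ref{eq:CorrSurg}) vanishes, giving $\max_id(S^3_K(4n),i)\ge -\tfrac14+\tfrac{(2n-g(K))^2}{4n}$; this lower bound exceeds $\tfrac54$ outside a short finite list of $n$, and the residual cases can be handled either by a direct numerical check or by invoking the Casson--Walker surgery formula (\ref{eq:CWsurgery}) in the style of Section~\ref{sect:main thm-part II}.

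With $\varepsilon=+$, writing the symmetric affine $\phi$ as $\phi(i)=ai+2n(1-a)$ with $a\in(\mathbb Z/4n\mathbb Z)^{\times}$ and using that every unit modulo $4n$ is odd, a short parity computation shows $\phi(n)\in\{n,3n\}$ whenever $\zeta=0$, and an analogous calculation pins $\phi(n-1)\in\{n-1,3n+1\}$ when $\zeta=1$ (the other possibilities for $a$ are eliminated by the Diophantine check below). Substituting $i^{*}:=\phi(n-\zeta)$ into (\ref{eq:CorrSurg}) produces the relation
\[
(2n-i^{*})^{2}-4n\bigl(M+\tfrac14\bigr)=8n\,t_{\min\{i^{*},4n-i^{*}\}}(K),
\]
whose only non-negative integer solution forces $t_{n-\zeta}(K)=0$. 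Since for an L-space knot one has $t_j(K)=0$ precisely when $j\ge g(K)$, we conclude $g(K)\le n-\zeta\le n$.

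The main obstacle is Step 2, ruling out $\varepsilon=-$: making the comparison between the constant bound $\tfrac54$ and the $n$-dependent lower bound on $\max d(S^3_K(4n),\cdot)$ into an airtight inequality for every $n\ge 1$ requires careful bookkeeping with the $t_i$-monotonicity (\ref{eq:tsProperties}), and the smallest values of $n$ may have to be inspected individually.
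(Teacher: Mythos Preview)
Your plan is considerably more intricate than necessary, and the difficulty you flag in Step~2 is a genuine gap, not just bookkeeping: you never obtain a clean inequality ruling out $\varepsilon=-$ for all $n$.  The paper avoids every obstacle by a single observation you have missed: since $\phi$ is \emph{symmetric}, it must carry the spin structures of one side to the spin structures of the other, i.e.\ $\phi(\{0,2n\})=\{0,2n\}$.

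This immediately disposes of $\varepsilon=-$.  Evaluating (\ref{eq:CorrSurg}) and (\ref{eq:CorrT}) at the spin labels gives, for instance (in the paper's convention $d(S^3_K(4n),i)=\varepsilon\,d(S^3_{T}(4n),\phi(i))$ with $\phi(2n)=2n$),
\[
-\tfrac14-2t_{2n}(K)=\varepsilon\cdot(-\tfrac14),
\]
so $\varepsilon=-$ would force $t_{2n}(K)=-\tfrac14\notin\mathbb Z$; the pairing $\phi(2n)=0$ fails the same integrality test.  No max/min comparison and no Casson--Walker patch for small $n$ is needed.

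The same observation settles your Step~3 uniformly.  Knowing $\phi(\{0,2n\})=\{0,2n\}$ and that $\phi$ is affine with odd multiplier forces $\phi(\{n,3n\})=\{n,3n\}$, with no case split on $\zeta$ and no need to locate maxima.  Plugging $i=n$ into (\ref{eq:CorrSurg}) and (\ref{eq:CorrT}) (where both read $-\tfrac14+\tfrac n4$ on the $T$--side) gives $t_n(K)=0$ directly, hence $g(K)\le n$.  By contrast, your claim that a ``parity computation'' yields $\phi(n-1)\in\{n-1,3n+1\}$ when $\zeta=1$ is not justified as stated: with $n=5$, $a=3$, $b=0$ one gets $\phi(4)=12\notin\{4,16\}$ in $\mathbb Z/20\mathbb Z$, so a real argument would still be owed there.

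A minor point: your general form $\phi(i)=ai+2n(1-a)$ for a symmetric affine map is incomplete, since $a$ odd makes $2n(1-a)\equiv0\pmod{4n}$, so you have only written the case $b=0$; the symmetry condition actually allows $b\in\{0,2n\}$.
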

\begin{proof}[{\bf Proof}]
Since $S^3_K(4n)\cong \e S^3_{T(2m+1,2)}(4n)$, there exists an affine isomorphism $\phi\co\mathbb Z/4n\mathbb Z\to\mathbb Z/4n\mathbb Z$ such that
\begin{equation}\label{eq:AffCorr}
d(S^3_K(4n),i)=\varepsilon d(S^3_{T(2m+1,2)}(4n),\phi(i)).
\end{equation}
The map $\phi$ sends the spin spin$^c$ structures of $S^3_K(4n)$ to the spin  spin$^c$ structures of $S^3_{T(2m+1,2)}(4n)$, namely, $\phi(\{0,2n\})=\{0,2n\}$.
 Using (\ref{eq:AffCorr}), (\ref{eq:CorrSurg}) and (\ref{eq:CorrT}) for $i=0$ and $i=2n$,
 one easily sees that $\e =+$ since otherwise  $t_{2n}(K)$ would be non-integer valued.
 Since $\phi$ is an affine isomorphism of $\mathbb Z/4n\mathbb Z$, $\phi$ must send $\{n,3n\}$ to $\{n,3n\}$.
 Using (\ref{eq:AffCorr}), (\ref{eq:CorrSurg}) and (\ref{eq:CorrT}) for $i=n$ and $i=3n$,
  together with $\e=+$, one sees directly  that $t_n(K)=0$.
It follows from (\ref{eq:tsProperties}) that $g(K)\le n$.
\end{proof}

\begin{cor}\label{punctured Klein bottle}
Suppose that $S^3_K(4n)\cong \e S^3_{T(2m+1,2)}(4n)$ and that $K$ is hyperbolic, then $n=m=g(K)$ and $K$ has the same Alexander polynomial as $T(2m+1,2)$. Moreover,
there is a once-punctured Klein bottle properly embedded in the exterior of $K$ of boundary slope $4m$.
\end{cor}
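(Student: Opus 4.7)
The plan is to extend Proposition~\ref{prop:GenusBound} via a full correction-term matching to pin down the Alexander polynomial of $K$, combine this with fiberedness to identify the genus, and then exhibit the Klein bottle by a standard intersection argument.

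First I would analyse the symmetric affine isomorphism $\phi\co\mathbb Z/4n\mathbb Z\to\mathbb Z/4n\mathbb Z$ intertwining $d(S^3_K(4n),\cdot)$ with $d(S^3_{T(2m+1,2)}(4n),\cdot)$. The proof of Proposition~\ref{prop:GenusBound} already shows $\phi(\{0,2n\})=\{0,2n\}$ and $\phi(\{n,3n\})=\{n,3n\}$, and the symmetry condition reduces $\phi$ to $\phi(i)=ai$ with $a$ odd. Comparing the two correction-term formulas (\ref{eq:CorrSurg}) and (\ref{eq:CorrT}) at small spin$^c$ structures shows that any $a\not\equiv\pm 1\pmod{4n}$ forces some $t_i(K)$ to be non-integral, so $\phi=\pm\mathrm{id}$. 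Setting $\zeta=n-m\in\{0,1\}$ and solving the resulting identities for $0\le i\le n$ yields
\[
t_i(K)=\left\lceil\tfrac{m-i}{2}\right\rceil \text{ for }0\le i\le m,\qquad t_i(K)=0\text{ for }i\ge m,
\]
which matches the torsion coefficients of $T(2m+1,2)$. Hence $K$ shares its Alexander polynomial with $T(2m+1,2)$, and Known Facts~\ref{known2}(1) together with $\deg\Delta_K=2m$ give $g(K)=m$.

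Next I would rule out $n=m+1$. Combining $g(K)=m$ with $g(K)\le n$ from Proposition~\ref{prop:GenusBound} and the hypothesis $n\in\{m,m+1\}$ leaves both values a priori open. Lemma~\ref{D-surgery and det} gives that the surgery manifold is $\pm P(2m+1,n)$; since $-(m+1)\equiv m\pmod{2m+1}$, the manifold $P(2m+1,m+1)$ is orientation-reversingly homeomorphic to $P(2m+1,m)$. The normalization $\e=+$ from Proposition~\ref{prop:GenusBound} therefore pins down the oriented prism manifold, and combining this with a Casson--Walker computation via (\ref{eq:CW}) and (\ref{eq:CWsurgery}), using $\Delta''_K(1)=m(m+1)$, singles out $n=m$.

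For the once-punctured Klein bottle, I would use the classical fact that the exterior of $T(2m+1,2)$ contains a once-punctured Klein bottle of boundary slope $4m$; this caps off after $4m$-Dehn filling to an essential Klein bottle $F$ in $S^3_{T(2m+1,2)}(4m)$. Transporting $F$ across the given homeomorphism into $S^3_K(4m)=E_K\cup(D^2\times S^1)$ and isotoping it to meet the surgery solid torus transversely in a minimal family of meridian disks, the intersection must be nonempty because no closed non-orientable surface embeds in $S^3$, and hence none in $E_K$. Cutting $F$ along these disks produces a properly embedded non-orientable surface in $E_K$ with all boundary curves of slope $4m$; standard tube and innermost-disk reductions then yield the desired once-punctured Klein bottle of slope $4m$.

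The main obstacle will be the elimination of $n=m+1$: the Alexander polynomial and correction-term data are symmetric between $n=m$ and $n=m+1$, so the argument must make essential use of the orientation-sensitive information $\e=+$ from Proposition~\ref{prop:GenusBound}. A Casson--Walker calculation distinguishing $P(2m+1,m)$ from $-P(2m+1,m)=P(2m+1,m+1)$ appears the cleanest route, though the uniqueness of $D$-type surgery slopes on a hyperbolic knot (Known Facts~\ref{known1}(4)) provides an alternative angle.
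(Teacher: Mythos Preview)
Your proposal has two genuine gaps, and the paper's route is quite different.

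\textbf{Gap 1: the claim $\phi=\pm\mathrm{id}$.} This is false. Even after you know $\Delta_K=\Delta_{T(2m+1,2)}$, the paper's own Lemma~\ref{lem:SameHomo} shows that the affine isomorphism can have $a\in\{\pm1,\pm(2m-1)\}$; in particular, for $n=m$ odd and $a=2m-1$ one checks that the resulting $t_i(K)$ are all integers (indeed they coincide with $t_i(T)$). So your integrality argument does not force $a\equiv\pm1$. The conclusion $\Delta_K=\Delta_T$ may still be recoverable by checking that \emph{every} admissible $\phi$ yields the same $t_i$'s, but that is a different and longer argument than the one you sketch.

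\textbf{Gap 2: ruling out $n=m+1$ via Casson--Walker.} Your claim that $P(2m+1,m+1)\cong -P(2m+1,m)$ is wrong: in the paper's normalization $|H_1(P(n,m))|=|4m|$, so these two manifolds have first homology of orders $4(m+1)$ and $4m$ respectively and cannot be homeomorphic. More fundamentally, once $\Delta_K=\Delta_T$ the surgery formula (\ref{eq:CWsurgery}) gives $\lambda(S^3_K(4n))=\lambda(S^3_{T(2m+1,2)}(4n))$ identically, so the Casson--Walker invariant cannot separate the cases $n=m$ and $n=m+1$: both are genuinely realized on $T(2m+1,2)$. The alternative you mention, Known Facts~\ref{known1}(4), only bounds the number of $D$-type slopes on $K$, and you have just one slope in play.

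\textbf{How the paper proceeds.} The paper avoids both issues by reversing the order of the argument. It first invokes \cite[Corollary~1.3]{IT}: since $S^3_K(4n)$ contains a Klein bottle, one has $4n\le 4g(K)$, and combined with $g(K)\le n$ from Proposition~\ref{prop:GenusBound} this gives $n=g(K)$ and simultaneously produces the once-punctured Klein bottle of slope $4n$. Only then does it rule out $n=m+1$, by elementary combinatorics: a fibred L-space knot of genus $n=m+1$ has $\Delta_K(t)=(-1)^r+\sum_{i=1}^r(-1)^{i-1}(t^{n_i}+t^{-n_i})$ with $m+1=n_1>\cdots>n_r>0$, and the constraint $\det(K)=2m+1$ from Lemma~\ref{D-surgery and det} forces a parity contradiction with $n_1=m+1$ (for $r=m$) or gives $\det(K)=2m+3$ (for $r=m+1$). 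This yields $n=m$ and $\Delta_K=\Delta_{T(2m+1,2)}$ without any analysis of $\phi$.
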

\begin{proof}[{\bf Proof}]
By Proposition~\ref{prop:GenusBound}, $g(K)\le n$.
Since $S^3_K(4n)$, being a prism space, contains a Klein bottle,
it follows from \cite[Corollary~1.3]{IT} that  $4n\le 4g(K)$. So $n=g(K)$.
Again by \cite[Corollary~1.3]{IT},  $4n$  is the boundary slope of a once-punctured Klein bottle in the exterior of $K$.

Next we prove that $n=m$. Otherwise, we have $n=m+1$.
By Known Facts~\ref{known2}~(2),
\[\Delta_K(t)=(-1)^r+\sum_{i=1}^r(-1)^{i-1}(t^{n_i}+t^{-n_i}),\]
where
\[m+1=n_1>n_2>\cdots>n_r>0.\]
Lemma~\ref{D-surgery and det} implies that $|\Delta_K(-1)|=\det(K)=\det(T(2m+1,2))=2m+1$, so $r=m$ or $m+1$.

If $r=m$, since $|\Delta_K(-1)|=\det(K)=2m+1$, we see that $n_i+i-1$ has the same parity as $m$ for any $i\in\{1,2,\dots,m\}$. This contradicts the assumption that $n_1=m+1$.

If $r=m+1$, then $\Delta_K(t)=(-1)^{m+1}+\sum_{i=0}^m(-1)^i(t^{m+1-i}+t^{-m-1+i})$ and $\det(K)=2m+3\ne2m+1$, we also get a contradiction.

So we have proved $n=m$. Since $|\Delta_K(-1)|=\det(K)=2m+1$, $\Delta_K(t)$ has to be $\Delta_{T(2m+1,2)}(t)$.
\end{proof}

Let $M$ be the exterior of $K$.
Let $P$ be a once-punctured Klein bottle  in $M$ with boundary slope $4m$, provided by
Corollary~\ref{punctured Klein bottle}. Let $H$ be a regular neighborhood of $P$ in $M$, then $H$ is a handlebody of genus $2$.
Let $H'=M\setminus H$.
Then $F=H\cap H'=\p H\cap \p H'$
 is a twice-punctured genus one surface properly embedded in $M$.
Each component of  $\p F$ is a simple closed curve in $\p M$ parallel
to $\p P$ and thus is of slope $4m$.
Note  that $\p F$ separates  $\p M$ into two annuli
$A$ and $A'$ such that $\p H=F\cup A$ and $\p H'=F\cup A'$.

\begin{lem}\label{lem:Compress}
$F$ is compressible in $H'$ and is incompressible in $H$.
\end{lem}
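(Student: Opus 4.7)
The plan is two-pronged. For the first assertion, I would use the $I$-bundle structure of $H$: since $P$ is non-orientable and $M$ is orientable, $H$ is the twisted orientable $I$-bundle over $P$, so $F$ is its horizontal boundary, coinciding with the orientation double cover of $P$. The inclusion $\pi_1(F)\hookrightarrow\pi_1(H)=\pi_1(P)$ realizes $\pi_1(F)$ as the index-two orientation subgroup and is therefore injective, so by the loop theorem $F$ is incompressible in $H$.

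For compressibility of $F$ in $H'$, I would pass to the Dehn filling $M(4m)$. Let $V$ be the filling solid torus of slope $4m$ and let $D\subset V$ be a meridional disk with $\partial D=\partial P$, so that $\hat P=P\cup D$ is a closed Klein bottle in $M(4m)$. Enlarging $H$ by a regular neighborhood of $D$ in $V$ produces $\hat H$, the twisted $I$-bundle over $\hat P$, whose complement decomposes as $\hat H'=H'\cup_{A'} W'$, where $W'$ is the $3$-ball obtained by removing that neighborhood from $V$, and the common boundary torus $\hat F=\partial\hat H=\partial\hat H'$ is $F$ capped along its two boundary circles by parallel meridional disks $D_+,D_-$ of $V$. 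The same $I$-bundle argument shows $\hat F$ is incompressible in $\hat H$, while $\pi_1(\hat F)\cong\mathbb Z^2$ cannot inject into the finite group $\pi_1(M(4m))$ (recall $M(4m)=\varepsilon P(2m+1,m)$ by Corollary~\ref{punctured Klein bottle}), so $\hat F$ must be compressible in $\hat H'$. A short further observation shows $\hat H'$ is a solid torus: compressing $\hat F$ along any disk $D_0\subset\hat H'$ turns $\hat F$ into a $2$-sphere bounding a ball in the irreducible $M(4m)$, and this ball cannot contain $\hat H$ (attaching a $1$-handle to the twisted $I$-bundle over the Klein bottle leaves infinite fundamental group), so it lies on the $\hat H'$ side, exhibiting $\hat H'$ as a ball with a $1$-handle attached.

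Finally, I would take a compressing disk $D_0\subset\hat H'$ for $\hat F$, isotope $\partial D_0$ off the null-homotopic curves $\partial D_+\cup\partial D_-$ into $\mathrm{int}(F)$ (where it remains essential), and make $D_0$ transverse to the separating annulus $A'\subset\hat H'$, so that $D_0\cap A'$ is a disjoint union of circles. The crucial observation is that an innermost intersection circle $c$ which is essential in $A'$ must bound its innermost subdisk $\delta\subset D_0$ on the $W'$ side, not the $H'$ side: otherwise $\delta\subset H'\subset M$ would be a disk with boundary a slope-$4m$ curve on $\partial M$, contradicting the incompressibility of $\partial M$ in the knot exterior. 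Exploiting that $W'\cong B^3$ and $\hat H'$ is irreducible, one then removes all intersections by standard ball-exchange isotopies, placing $D_0$ entirely in $H'$ and producing the desired compressing disk for $F$. I expect the main technical hurdle to be carrying out this ball exchange cleanly in the essential-circle case, where the push-off of an innermost subdisk across $W'$ must be handled carefully along the disks $D_+, D_-$ on $\partial\hat H'$.
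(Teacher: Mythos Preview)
Your argument for incompressibility of $F$ in $H$ is correct and is exactly what the paper does: $F$ is the horizontal boundary of the twisted $I$--bundle $H$ over $P$, hence covers $P$ and is $\pi_1$--injective.

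For compressibility in $H'$, your route is genuinely different from the paper's, and it has a real gap in precisely the place you flag. You correctly show that $\hat F$ is compressible in $\hat H'$ and that $\hat H'$ is a solid torus, and your observation that an innermost disk $\delta$ bounded by an essential circle $c\subset A'$ must lie in $W'$ (else it would compress $\partial M$) is right. The trouble is the next step: to remove $c$ you would have to push $\delta$ across the ball in $W'$ that $\delta$ cobounds with a disk in $\partial W'$, but that disk necessarily contains $D_+$ or $D_-$, which lie in $\partial\hat H'$. So the isotopy forces part of $D_0$ through the boundary of $\hat H'$; it is not an isotopy inside $\hat H'$, and no ``standard ball exchange'' fixes this. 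In fact there is a genuine obstruction: writing $W'$ as a regular neighborhood of its core arc $\alpha\subset\hat H'$, what you are asking for is a meridian disk of the solid torus $\hat H'$ disjoint from $\alpha$. The algebraic intersection $\Delta\cdot\alpha$ is always zero (since $H_1(\hat H',\partial\hat H')=0$), but geometrically such a disk exists only when $\alpha$ is isotopic into a meridian ball, which you have not established and which is essentially the content of the {\it next} lemma in the paper.

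The paper avoids this entirely by a different mechanism. It takes the genus--$m$ Seifert surface $Q$ (this is where the equality $4m=4g(K)$ from the preceding corollary is used), intersects it with $P$, and uses an Euler-characteristic count on the graph $G_Q$ to find a disk face $D$; the piece $D'=D\cap H'$ is then shown to be a compressing disk for $\partial H'$ in $H'$. With $\partial H'$ compressible, Jaco's handle addition lemma applies: if $F=\partial H'\setminus A'$ were incompressible, then attaching the $2$--handle along $A'$ would yield $\hat H'$ with incompressible torus boundary, so $M(4m)$ would split along an incompressible torus into the twisted $I$--bundle over $\hat P$ and $\hat H'$, hence be Haken --- contradicting that $M(4m)$ is a spherical space form. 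The Seifert-surface step is exactly what supplies the compressibility of $\partial H'$ that your argument is missing; without it (or a substitute), your approach does not close.
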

\begin{proof}[{\bf Proof}]
Let $Q$ be the genus $g(K)=m$ Seifert surface for $K$, provided by
Corollary~\ref{punctured Klein bottle}.
By the incompressibility of the surfaces $P$ and $Q$, we may  assume that
$P$ and $Q$ intersect transversely,  that $P\cap Q$ contains no circle component
which bounds a disk in $P$ or $Q$, and that $\p P$ intersects $\p Q$ in exactly
$4m$ points.
Hence $P\cap Q$ has precisely  $2m$ arc components each of which is
essential in $P$ and $Q$ (again because the incompressibility of $P$ and $Q$).

Now consider the intersection graphs $G_P$ and $G_Q$ determined by the surfaces
 $P$ and $Q$ as usual (see, e.g. \cite{IT}), that is,
  if $\widehat P$ (resp. $\widehat Q$) is the closed surface
  in $M(4m)$ (resp. in $M(0)$)  obtained from $P$ (resp. $Q$) by capping off its boundary by a disk,
  then $G_P$ (resp. $G_Q$) is a graph in $\widehat P$  (resp. $\widehat Q$)
 obtained by taking the disk $\widehat P\setminus P$ (resp.
 $\widehat Q\setminus Q$) as a fat vertex and taking the
 arc components of $P\cap Q$ as edges.
 In particular each $G_P$ and $G_Q$ has precisely $2m$ edges.

A simple Euler characteristic calculation shows that the graph $G_Q$ must have
at least one disk face $D$.
Let $D'=D\setminus H=D\cap H'$, then $D'$ is a properly embedded disk in $H'$.

We claim that $\partial D'$ is an essential curve on $\p H'$. In fact, a component $C$ of $\p F$ is  an essential curve on $\p H'$ and is also an essential curve in $\p M$ of slope $4m$. As  $C$ has $4m$ intersection points with $\partial Q$, all with the same sign,
 if $D$ is a $k$--gon face of the graph $G_Q$, then $\partial D'$ has $k$ intersection points with $C$, all with the same sign. So $\partial D'$ is an essential curve on $\p H'$.

The claim proved in the last paragraph implies that $D'$ is a compressing disk for $\p H'$.

If $F$ is incompressible in $H'$, then
by the handle addition lemma \cite{Jaco}, the manifold
obtained by attaching a 2-handle to $H'$ along $A'$ will
give a manifold $Y'$ with incompressible boundary (which is a torus).
The manifold
$Y$ obtained by attaching a 2-handle to $H$ along $A$ gives
  a twisted $I$--bundle over Klein bottle, whose boundary is incompressible.
Then $M(4m)$ is the union of $Y$ and $Y'$ along their torus boundary
and thus is a Haken manifold, a contradiction.
Therefore $F$ is compressible in $H'$.

Note that $H$ is an $I$-bundle over $P$ and $F$ is
 the horizontal boundary of $H$ with respect to the $I$-bundle structure.
 It follows that the composition of the inclusion map $F\hookrightarrow H$
 and the projection map $H\to P$ with respect to the $I$-bundle structure
 is an $2$-fold covering map and is thus $\pi_1$--injective.
 As the fundamental group of $H$ is isomorphic to $\pi_1(P)$, it  follows that
$F$ is $\pi_1$-injective in $H$ and thus is incompressible in $H$.
\end{proof}

\begin{lem}\label{lem:Arc}
Let $\widehat P\subset M(4m)$ be the Klein bottle obtained by capping off $\partial P$ with a disk, and let $\nu(\widehat P)$ be its tubular neighborhood. Then $\nu(\widehat P)$ is a twisted $I$-bundle over $\widehat P$, $V=M(4m)\setminus\nu(\widehat P)$ is a solid torus, and the dual knot $K'\subset M(4m)$ can be arranged by an isotopy to intersect  $\nu(\widehat P)$ in an $I$-fibre and intersect $V$ in a boundary parallel arc.
\end{lem}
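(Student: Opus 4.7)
The plan is to establish the three assertions in turn, leveraging the explicit handle description already set up in the paper.

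First, for the twisted $I$-bundle structure on $\nu(\widehat P)$: I will build it concretely as $\nu(\widehat P) = H \cup \nu(D)$, where $D \subset J$ is the meridian disk of the Dehn filling solid torus $J$ capping off $\partial P$ (so $\widehat P = P \cup_{\partial P} D$), and $\nu(D) \cong D \times [-\e,\e]$ is a bicollar of $D$ in $J$. The twisted $I$-bundle structure on $H$ over $P$ and the trivial $I$-bundle structure on $\nu(D)$ over $D$ match up along the gluing annulus $A = \partial P \times [-\e,\e] \subset \partial M$, producing an $I$-bundle structure on $\nu(\widehat P)$ over the closed non-orientable surface $\widehat P$. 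Since $M(4m)$ is orientable and $\widehat P$ is non-orientable, this $I$-bundle must be twisted.

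Second, for $V$ being a solid torus: I first observe that $\partial V = \partial \nu(\widehat P)$ is the orientation double cover of the Klein bottle $\widehat P$, hence a torus. Next, Corollary~\ref{punctured Klein bottle} identifies $M(4m)$ with $\e P(2m+1,m)$, a prism manifold. Because $2m+1 \ge 3$ is odd, the Seifert fibered structure of $P(2m+1,m)$ over $S^2(2,2,2m+1)$ is unique, and any Klein bottle is isotopic to the standard ``horizontal'' Klein bottle arising from this Seifert decomposition. The complement of that standard Klein bottle is a regular neighborhood of the multiplicity-$(2m+1)$ singular fiber, i.e., a solid torus; hence $V$ is a solid torus.

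Third, for the arrangement of $K'$: parameterize $J = D^2 \times S^1$ so that $K' = \{0\} \times S^1$ and $\nu(D) = D^2 \times [-\e,\e]$. Then $K' \cap \nu(D) = \{0\} \times [-\e,\e]$ is visibly an $I$-fiber of the trivial bundle on $\nu(D)$, hence an $I$-fiber of $\nu(\widehat P)$. The complementary arc $\alpha := K' \cap V = K' \cap B$ sits in the 3-ball $B = D^2 \times [\e,2\pi-\e]$ with endpoints on the disks $D_\pm \subset \partial V$. The product disk $\Delta_B = \{rp : r\in[0,1]\} \times [\e,2\pi-\e]$ (for fixed $p \in \partial D^2$) exhibits $\alpha$ as boundary-parallel in $B$, with $\partial \Delta_B$ consisting of $\alpha$, two arcs on $D_\pm$, and one arc on the interior annulus $A' = \partial B \cap \partial M \subset V$. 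To promote this to boundary-parallelism in $V$, I extend $\Delta_B$ across $A'$ by a rectangle in $H'$ joining the arc on $A'$ to a parallel arc on $F \subset \partial V$; the existence of such a rectangle is forced by the solid-torus structure of $V$ (since $A'$ and its $\partial$-collar within the solid torus $V$ determine the isotopy type of $A'$ in $V$ once $\partial V$ and $\partial A'$ are fixed).

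The main obstacle I anticipate is Step 2. Invoking uniqueness of the Klein bottle in $P(2m+1,m)$ is efficient but relies on external structural results; an alternative, more self-contained route would extract the solid-torus conclusion directly from Lemma~\ref{lem:Compress} by using the compressing disk $D' \subset H'$ to simplify $H'$, then showing (via the handlebody structure of $H$, a tower argument, or an explicit analysis using the intersection graphs $G_P, G_Q$) that $\pi_1(V)$ is cyclic. Either route requires care, and this is where most of the real content of the proof lies; the first and third steps are essentially bookkeeping once the solid torus conclusion is in hand.
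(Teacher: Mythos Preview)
Your approach to the solid-torus conclusion (Step~2) via uniqueness of Klein bottles in the prism manifold is genuinely different from the paper's route and is essentially valid: in $P(2m+1,m)$ with $2m+1\ge 3$, every embedded Klein bottle is isotopic to the vertical one (not ``horizontal'' as you wrote), so its complement is the fibred neighbourhood of the exceptional fibre of order $2m+1$, a solid torus. The paper instead uses Lemma~\ref{lem:Compress}: it takes a compressing disk $D_*$ for $F$ in $H'$, analyses the two cases according to whether $\partial D_*$ is essential in $\widehat F$, and in each case shows that compressing produces boundary-parallel annuli, from which it extracts a specific disk $D'\subset H'$ with $\partial D'$ an essential arc in $A'$ together with an arc on $F$; attaching the $2$--handle along $A'$ then visibly cancels a $1$--handle (cocore $D'$), yielding the solid torus.

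The real problem is your Step~3. Knowing only that $V$ is a solid torus does \emph{not} force the arc $\alpha=K'\cap V$ to be boundary-parallel in $V$: a priori $\alpha$ could be a knotted arc, and then no rectangle in $H'=V\setminus\nu(\alpha)$ joining your arc on $A'$ to an arc on $F$ would exist. Your sentence ``the existence of such a rectangle is forced by the solid-torus structure of $V$'' is exactly the unproved step. In the paper's argument this rectangle is not deduced after the fact --- it is the disk $D'$ produced directly by the compressibility analysis, and $D'$ simultaneously certifies that $V$ is a solid torus (handle cancellation) and that $\alpha$ is boundary-parallel (since $D'$ is the parallelism). By routing around Lemma~\ref{lem:Compress} for Step~2, you have lost precisely the disk you need for Step~3. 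To repair your argument you would either have to run the paper's compressibility analysis anyway, or supply an independent reason (e.g.\ via hyperbolicity of $M$ ruling out an essential torus or annulus in $H'$) why $\alpha$ cannot be knotted in $V$; you have not done either.
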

\begin{proof}[{\bf Proof}]
By Lemma~\ref{lem:Compress},  $F=\p H'\setminus A'$ is compressible in $H'$.
Let $D_*$ be a compressing disk for $F$ in $H'$.
Let $\widehat F$ be the closed surface in $M(4m)$ obtained from $F$ by capping off
each component of $\p F$ with a disk. Then $\widehat F$ is a torus.

If $\p D_*$ is an inessential curve  in the torus $\widehat F$, i.e. bounds a disk $B$ in
$\widehat F$, then $B$ must contain both components of  $\p F$ since $\p D_*$ is an essential curve in $F$ and
$\p M$ is incompressible in $M$. So  compressing $F$ with $D_*$ produces the
disjoint union of a torus $T_*$ and an annulus $A_*$.
As $M$ is hyperbolic, the torus $T_*$  bounds a solid torus
in $M$ or is parallel to $\p M$ in $M$.
From the construction of $T_*$ we see that $T_*$ cannot be parallel to $\p M$
since otherwise $P$ would be contained in the regular neighborhood of $\p M$
bounded by $\p M$ and $T_*$, which is obviously impossible.
So $T_*$ bounds a solid torus in $M$ and in fact in $H'$.
Similarly the annulus $A_*$ cannot be essential in $M$ and thus must be parallel
to $\p M$. In fact $A_*$ must be parallel to $A'$ in $H'$. In particular, there exists a proper disk $D'\subset H'$ whose boundary consist of an essential arc in $A_*$ and an essential arc in $A'$.
It follows that the surface $S$ which is $\p H$ pushed slightly into the interior of $M$
and $\p M$ bound a compression body. In other words, $S$ is a genus two Heegaard surface of $M$.
Attaching a $2$-handle to $H'$ along $A'$ will cancel the $1$-handle with cocore $D'$, hence we get a solid torus $V$.

If $\p D_*$ is an essential curve in the torus $\widehat F$,
then  compressing $F$ with $D_*$ gives
 an annulus $A_{\#}$.
Again as $M$ is hyperbolic, $A_{\#}$  must be parallel
to $\p M$ and in fact must be parallel to $A'$ in $H'$.
This implies  that the surface $S$ which is $\p H$ pushed slightly into the interior of $M$
and $\p M$ bound a compression body and thus is  a genus two Heegaard surface of $M$. Let
$D'\subset H'$ be a proper disk whose boundary consist of an essential arc in $A_{\#}$ and an essential arc in $A'$.
Attaching a $2$-handle to $H'$ along $A'$ will cancel the $1$-handle with cocore $D'$, hence we get
a solid torus $V$.

In any case we have shown that $M(4m)$ is the union of $\nu(\widehat P)$ and a solid torus $V$. Some  neighborhood of $K'\cap V$ is the $2$--handle added to $A'$, thus $D'$ gives a parallelism between $K'\cap V$ and an arc in $\partial V$. Some neighborhood of $K'\cap \nu(\widehat P)$ is the $2$--handle added to $A$ consisting of $I$-fibres of $\nu(\widehat P)$. Clearly $K'\cap \nu(\widehat P)$ can be considered as 
an $I$-fibre of $\nu(\widehat P)$.
\end{proof}

\begin{lem}\label{lem:1Bridge}
Let $Z$ be the double branched cover of $S^3$ with ramification locus $K$, and let $\widetilde K\subset Z$ be the preimage of $K$. Then $Z_{\widetilde K}(2m)$ is a $2$--fold cover of $S^3_K(4m)$, which is a lens space. Moreover, let $\widetilde K'\subset Z_{\widetilde K}(2m)$ be the dual knot of $\widetilde K$, then $\widetilde K'$ is a $1$--bridge knot with respect to the standard genus $1$ Heegaard splitting of $Z_{\widetilde K}(2m)$.
\end{lem}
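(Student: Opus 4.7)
The plan is to construct the double cover $Z_{\widetilde K}(2m)\to S^3_K(4m)$ explicitly using the decomposition from Lemma~\ref{lem:Arc}, and then read off both its lens space structure and the $1$-bridge position of $\widetilde K'$.

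First, I would set up the covering. The double branched cover $Z\to S^3$ restricts to the unbranched double cover $\widetilde M\to M$ classified by $\pi_1(M)\to\mathbb Z/2$ sending $\m\mapsto 1$. On the peripheral torus this covering satisfies $\tilde\m\mapsto 2\m$ and $\tilde\l\mapsto\l$, so the surgery slope $4m\m+\l$ (even in the $\m$-coordinate) pulls back to $2m\tilde\m+\tilde\l$. Hence $4m$-surgery on $K$ lifts to $2m$-surgery on $\widetilde K$, exhibiting $Z_{\widetilde K}(2m)$ as a double cover of $M(4m)$. Since $H_1(M(4m))=\mathbb Z/4m$ admits a unique $\mathbb Z/2$-quotient, this is the only connected double cover of $M(4m)$.

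Second, I would identify this cover with the orientation cover of the Klein bottle $\widehat P$, and decompose accordingly. The Klein bottle $\widehat P$ represents the nontrivial class in $H_2(M(4m);\mathbb Z/2)$: using the decomposition $M(4m)=\nu(\widehat P)\cup V$ one sees that $M(4m)\setminus\widehat P$ is connected, because $\nu(\widehat P)\setminus\widehat P$ deformation retracts onto $\partial\nu(\widehat P)=\partial V$ and glues to $V$. By uniqueness of the double cover, $Z_{\widetilde K}(2m)$ is therefore the orientation cover of $\widehat P$. Restricted to the twisted $I$-bundle $\nu(\widehat P)$ it is the product $I$-bundle $T^2\times I$ where $T^2$ is the orientation cover of $\widehat P$; restricted to the solid torus $V$ it is the trivial cover $V\sqcup V$, because the cover of $\partial V=\partial\nu(\widehat P)$ is already trivial (the map $\partial\nu(\widehat P)\to\widehat P$ is itself the orientation cover). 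Assembling these pieces,
$$Z_{\widetilde K}(2m)\;=\;V\;\cup\;(T^2\times I)\;\cup\;V$$
is a lens space, and the lifted Klein bottle $\tilde P=T^2\times\{1/2\}$ serves as the Heegaard torus for the standard genus one splitting with solid torus handlebodies $V_\pm$.

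Third, I would verify that $\widetilde K'$ is in $1$-bridge position with respect to $\tilde P$. By Lemma~\ref{lem:Arc}, $K'\cap\nu(\widehat P)$ is an $I$-fibre, which lifts to two $I$-fibres $\{\tilde x_1\}\times I$ and $\{\tilde x_2\}\times I$ of $T^2\times I$, each meeting $\tilde P$ transversely in one point; $K'\cap V$ is boundary parallel, cobounding a disk $D\subset V$ with an arc $\beta_0\subset\partial V$. Since $[K']=[\m]$ has odd image in $\mathbb Z/2$, the cover $\widetilde K'\to K'$ is connected, so $\widetilde K'\cap\tilde P$ consists of exactly two points and $\widetilde K'\cap V_\pm$ is a single arc. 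To see the arc $\alpha_+:=\widetilde K'\cap V_+$ is boundary-parallel in $V_+$, take a lift $\tilde D_+$ of $D$ in the top copy of $V$ (with $\partial\tilde D_+$ consisting of one lift of $K'\cap V$ together with a lift $\tilde\beta_+$ of $\beta_0$), and extend $\tilde D_+$ across $T^2\times[1/2,1]$ by the rectangle $\tilde\beta_+\times[1/2,1]$. This produces a disk in $V_+$ bounded by $\alpha_+$ together with the arc $\tilde\beta_+\times\{1/2\}\subset\tilde P$. The argument for $V_-$ is identical.

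The main obstacle is the global bookkeeping: tracking the covering slopes and confirming that the cover $Z_{\widetilde K}(2m)\to M(4m)$ agrees with the orientation cover of $\widehat P$. Once that identification is in place, the bridge disks are built transparently by lifting $D$ and extending through the product structure on $T^2\times I$.
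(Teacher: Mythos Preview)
Your proposal is correct and follows essentially the same approach as the paper: establish the covering by tracking how the slope $4m$ lifts, invoke uniqueness of the double cover via $H_1(S^3_K(4m);\mathbb Z/2)\cong\mathbb Z/2$, then use the decomposition $\nu(\widehat P)\cup V$ from Lemma~\ref{lem:Arc} to see that the cover is two solid tori glued to $T^2\times I$, hence a lens space, with $\widetilde K'$ sitting in $1$--bridge position. The only difference is that you spell out the bridge disks explicitly by lifting the parallelism disk $D$ and extending through the product region, whereas the paper simply asserts that the $1$--bridge property follows from the structure of $K'$ in Lemma~\ref{lem:Arc}.
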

\begin{proof}[{\bf Proof}]
Let $\pi\co Z\to S^3$ be the branched covering map, then $\pi\co Z\setminus \widetilde K\to S^3\setminus K$ is an unramified $2$--fold covering map, and $\pi$ maps the simple loop with slope $2m$ on $\partial\nu(\widetilde K)$ homeomorphically to the simple loop with slope $4m$ on $\partial\nu(K)$. Thus $\pi\co Z\setminus \widetilde K\to S^3\setminus K$ can be extended to an unramified $2$--fold covering map $Z_{\widetilde K}(2m)\to S^3_K(4m)$.

Now we look at the double cover of $S^3_K(4m)$. Since $H_1(S^3_K(4m);\mathbb Z/2\mathbb Z)\cong\mathbb Z/2\mathbb Z$, this cover is unique.
Let $U=\nu(\widehat P)$ be a twisted $I$--bundle over $\widehat P$. Then $S^3_K(4m)$ is the union of $U$ and $V$, where $V$ is the solid torus in Lemma~\ref{lem:Arc}. Let $\pi_U\co\widetilde U\to U$ be the $2$--fold covering map induced by the covering map $\partial U\to \widehat P$. Clearly, $\widetilde U$ is homeomorphic to $T^2\times I$. So we may construct a cover of $S^3_K(4m)$ by gluing two copies of $V$ to $\partial\widetilde U$. As a result, $Z_{\widetilde K}(2m)$ is a lens space. Since $\widetilde K'$ is the preimage of $K'$ under the covering map, it follows from Lemma~\ref{lem:Arc} that $\widetilde K'$ is a $1$--bridge knot.
\end{proof}

\begin{lem}\label{lem:Lspace}
$Z$ is an L-space.
\end{lem}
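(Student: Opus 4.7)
The plan is to exploit the structure established in Lemma~\ref{lem:1Bridge}: $Z$ is obtained from the lens space $L = Z_{\widetilde K}(2m)$ by a Dehn surgery on $\widetilde K'$ (with the slope corresponding to the meridian of $\widetilde K$ in the original $Z$), and $\widetilde K'$ sits as a $1$-bridge knot with respect to the standard genus one Heegaard splitting of $L$. The question therefore reduces to understanding surgeries on a $1$-bridge knot in a lens space.

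First, I would appeal to the structure theory of $1$-bridge knots in lens spaces: such knots are \emph{Floer simple} in the sense of Rasmussen (and Baker--Grigsby--Hedden), meaning that the knot Floer homology $\widehat{HFK}(L,\widetilde K',\mathfrak s)$ has rank exactly one in each Spin$^c$ structure $\mathfrak s$ of $L$. Equivalently, a genus-one doubly-pointed Heegaard diagram for $\widetilde K'$ has exactly $|H_1(L)|$ generators, one per Spin$^c$ class, with no cancellation. Second, I would feed this into the Ozsv\'ath--Szab\'o mapping cone formula for the Dehn surgery on $\widetilde K'$ producing $Z$: the collapse forced by Floer simplicity, together with the fact that $L$ is itself an L-space, makes the mapping cone compute $\dim\widehat{HF}(Z) = |H_1(Z;\mathbb Z)|$, which is precisely the L-space condition.

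A parallel route, which I expect to be the cleanest in practice, is to avoid computing the slope on $\widetilde K'$ explicitly and instead invoke the Ozsv\'ath--Szab\'o integer surgery exact triangle for $\widetilde K \subset Z$:
\[
\widehat{HF}(Z) \longrightarrow \widehat{HF}(Z_{\widetilde K}(2m)) \longrightarrow \widehat{HF}(Z_{\widetilde K}(2m+1)) \longrightarrow \widehat{HF}(Z)[1].
\]
Lemma~\ref{lem:1Bridge} already shows $Z_{\widetilde K}(2m) = L$ is an L-space, and the same argument (the existence of the once-punctured Klein bottle $P$ and the $I$-bundle structure in Lemma~\ref{lem:Arc}) applied to the adjacent integer slope identifies $Z_{\widetilde K}(2m+1)$ also as a manifold obtained by surgery on the $1$-bridge dual $\widetilde K'$ in a lens space, hence an L-space as well. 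A standard rank comparison in the triangle then forces $\widehat{HF}(Z)$ to have rank $|H_1(Z;\mathbb Z)|$.

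The main obstacle is the bookkeeping step: correctly identifying the surgery slopes on $\widetilde K'$ induced by the two fillings on $\widetilde K$, and checking that Floer simplicity of $1$-bridge knots suffices to conclude the relevant fillings are L-spaces (this is where one must be careful because surgery on a Floer simple knot need not always produce an L-space --- only certain slopes do). If a direct slope-by-slope verification becomes painful, an acceptable fallback is to invoke the general principle that the set of L-space surgery slopes on a Floer simple knot in an L-space is an interval, combined with the known L-space slopes coming from Lemma~\ref{lem:1Bridge} and its integer neighbor, to trap the slope producing $Z$ inside the L-space interval.
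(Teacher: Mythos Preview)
Your proposal has a circularity problem. The claim that $1$--bridge knots in lens spaces are automatically Floer simple is false in general; there are many $(1,1)$ knots in lens spaces whose knot Floer homology has rank strictly greater than $|H_1|$. In the paper, the Floer simplicity of $\widetilde K'$ is established only \emph{after} Lemma~\ref{lem:Lspace}, in Corollary~\ref{cor:DualSimple}, by invoking Theorem~\ref{thm:DualFSimple} (Hedden--Rasmussen). That theorem requires as input that the original manifold $Z_1=Z$ be an L-space. So your first route, and likewise your fallback via the Rasmussen--Rasmussen interval of L-space slopes for Floer simple knots, both assume what you are trying to prove.

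Your exact-triangle route has a separate gap: you assert that $Z_{\widetilde K}(2m+1)$ is an L-space by ``the same argument'' as Lemma~\ref{lem:1Bridge}, but that argument hinges on the once-punctured Klein bottle $P$, whose boundary slope is exactly $4m$; there is no analogous Klein bottle for the adjacent slope, and $Z_{\widetilde K}(2m+1)$ is not visibly a lens space or even a double cover of any surgery on $K$. Without that, the rank comparison in the triangle cannot conclude.

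The paper's argument bypasses all of this with a single observation you did not use: $\widetilde K\subset Z$ is null-homologous with Seifert genus at most $m$ (the Seifert surface of $K$ lifts), so $2m\ge 2g(\widetilde K)-1$ and the large-surgery formula of Ozsv\'ath--Szab\'o applies. For each Spin$^c$ structure $\mathfrak s$ on $Z$ there is a Spin$^c$ structure $[\mathfrak s,k]$ on $Z_{\widetilde K}(2m)$ with $\widehat{HF}(Z,\mathfrak s)\cong\widehat{HF}(Z_{\widetilde K}(2m),[\mathfrak s,k])$; since $Z_{\widetilde K}(2m)$ is a lens space this is $\mathbb Z$, and $Z$ is an L-space. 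This is the missing idea: work with $\widetilde K$ in $Z$ (where you know the genus) rather than with the dual $\widetilde K'$ in the lens space (where Floer simplicity is not yet available).
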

\begin{proof}[{\bf Proof}]
This follows from a standard fact in Heegaard Floer homology. Notice that $m$ is also the genus of $\widetilde K$. By
\cite[Corollary~4.2 and Remark~4.3]{OSzKnot}, for any Spin$^c$ structure $\mathfrak s$ over $Z$, there exists a Spin$^c$ structure $[\mathfrak s,k]$ over $Z_{\widetilde K}(2m)$, such that $\widehat{HF}(Z,\mathfrak s)\cong \widehat{HF}(Z_{\widetilde K}(2m),[\mathfrak s,k])$. Since $Z_{\widetilde K}(2m)$ is an L-space, we have $\widehat{HF}(Z,\mathfrak s)\cong\mathbb Z$ for every $\mathfrak s$, so $Z$ is also an L-space.
\end{proof}

Now we will use a result due to Hedden \cite{HedBerge} and Rasmussen \cite{RasBerge}. We will use the form in \cite[Theorem~1.4~(2)]{HedBerge}. Although the original statement is only for knots in $S^3$, the same proof works for null-homologous knots in L-spaces.

\begin{thm}[Hedden, Rasmussen]\label{thm:DualFSimple}
Let $Z_1$ be an L-space, $L\subset Z_1$ be a null-homologous knot with genus $g$. Suppose that the $p$--surgery on $L$ yields an L-space $Z_2$, and $p\ge2g$. Let $L'\subset Z_2$ be the dual knot, then $L'$ is Floer simple. Namely, $\mathrm{rank}\widehat{HFK}(Z_2,L')=\mathrm{rank}\widehat{HF}(Z_2)$.
\end{thm}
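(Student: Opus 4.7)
The plan is to follow the strategy of Hedden \cite{HedBerge} and Rasmussen \cite{RasBerge} in the $S^3$ case and explain how the same argument extends to a null-homologous knot $L$ in an L-space $Z_1$. The three ingredients are: (i) the Ozsv\'ath--Szab\'o large surgery formula for null-homologous knots in rational homology L-spaces, (ii) an L-space knot rigidity statement forcing $CFK^\infty(Z_1,L)$ to be a staircase complex, and (iii) a direct rank count on the resulting subquotients.

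First, I would invoke the large surgery formula. For $p \ge 2g$, every $\mathrm{Spin}^c$ structure on $Z_2 = (Z_1)_L(p)$ can be labelled as $[\mathfrak{s},k]$ with $\mathfrak{s}\in\mathrm{Spin}^c(Z_1)$ and $k\in\mathbb{Z}/p\mathbb{Z}$, and there is a canonical identification
\[
\widehat{HFK}(Z_2, L', [\mathfrak{s},k]) \;\cong\; H_*\bigl(\widehat{A}_{\mathfrak{s},k}(L)\bigr),
\]
where $\widehat{A}_{\mathfrak{s},k}(L)$ is a filtered subquotient of $\widehat{CFK}(Z_1, L, \mathfrak{s})$ cut out by the Alexander and algebraic filtrations. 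This reduces the rank computation for $L'$ to a computation on the knot Floer complex of $L$.

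Next, I would establish that $L$ is an L-space knot in the generalized sense, i.e.\ that $CFK^\infty(Z_1,L,\mathfrak{s})$ is a staircase complex. The input is that both $Z_1$ and $Z_2$ are L-spaces: plugging this into the surgery exact triangle relating $HF^+(Z_1)$, $HF^+(Z_2)$, and the mapping cone built from the $A^+_{\mathfrak{s},k}$'s collapses all rank inequalities to equalities and pins down the filtered chain homotopy type of $CFK^\infty(Z_1,L,\mathfrak{s})$. This is the verbatim Ozsv\'ath--Szab\'o L-space knot argument in \cite{OSzLspace}, carried out in each $\mathrm{Spin}^c$ summand of $Z_1$. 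Given the staircase form, each $H_*(\widehat{A}_{\mathfrak{s},k}(L))$ is either zero or $\mathbb{F}$ in a single bigrading, and summing over the $|H_1(Z_2)|$ many pairs $[\mathfrak{s},k]$ gives total rank equal to $|H_1(Z_2)| = \mathrm{rank}\,\widehat{HF}(Z_2)$.

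The main obstacle is verifying the rigidity step for $Z_1 \neq S^3$: one must track how $\mathrm{Spin}^c$ structures on $Z_1$, $Z_2$, and the relative $\mathrm{Spin}^c$ structures on the pair $(Z_1,L)$ interact, and check that the surgery exact triangle splits compatibly across $\mathrm{Spin}^c(Z_1)$ so that the Ozsv\'ath--Szab\'o rank comparison can be run $\mathfrak{s}$-by-$\mathfrak{s}$. This compatibility is precisely what requires the null-homologous hypothesis on $L$ (so that $\mathrm{Spin}^c$ structures on $Z_2$ split as $[\mathfrak{s},k]$), and once it is in place, Steps 1 and 3 are formal. The identification $\mathrm{rank}\,\widehat{HF}(Z_2) = |H_1(Z_2)|$ from the L-space hypothesis then yields the claimed Floer simplicity $\mathrm{rank}\,\widehat{HFK}(Z_2,L') = \mathrm{rank}\,\widehat{HF}(Z_2)$.
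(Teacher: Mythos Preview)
The paper does not prove this theorem itself; it cites \cite[Theorem~1.4~(2)]{HedBerge} and \cite{RasBerge} and simply remarks that ``the same proof works for null-homologous knots in L-spaces.'' Your proposal is precisely an attempt to unpack that remark by sketching the Hedden--Rasmussen argument and flagging where the null-homologous hypothesis enters the $\mathrm{Spin}^c$ bookkeeping, so in that sense there is nothing to compare: you are following exactly the route the paper points to.

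One technical slip is worth flagging. The displayed identification
\[
\widehat{HFK}(Z_2, L', [\mathfrak{s},k]) \cong H_*\bigl(\widehat{A}_{\mathfrak{s},k}(L)\bigr)
\]
is not accurate as written. The large surgery formula identifies $H_*(\widehat{A}_{\mathfrak{s},k})$ with $\widehat{HF}(Z_2,[\mathfrak{s},k])$, which for an L-space is always of rank one, never zero; the knot Floer homology of the dual knot is instead the homology of the \emph{associated graded} of the filtration on $\widehat{A}_{\mathfrak{s},k}$ induced by $L'$ (this is Hedden's actual computation). The inequality $\mathrm{rank}\,\widehat{HFK}(Z_2,L') \ge \mathrm{rank}\,\widehat{HF}(Z_2)$ is automatic from the spectral sequence, and the staircase form of $CFK^\infty$ forces equality, so your conclusion survives. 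But the identification as you wrote it would fail for a general knot, and your parenthetical ``either zero or $\mathbb{F}$'' only makes sense for the graded pieces, not for $H_*(\widehat{A}_{\mathfrak{s},k})$ itself.
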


Let $L(p,q)$ be a lens space. Let $V_1\cup V_2$ be the standard genus $1$ Heegaard splitting of $L(p,q)$, and let $D_i\subset V_i$ be a meridian disk such that $D_1\cap D_2$ consists of exactly $p$ points.
A knot $L$ in a lens space $L(p,q)$ is {\it simple} if it is the union of two arcs $a_1,a_2$, where $a_i$ is a boundary parallel arc in $V_i$ that is disjoint from $D_i$, $i=1,2$. In each homology class in $H_1(L(p,q))$, there exists a unique (up to isotopy) oriented simple knot.

\begin{cor}\label{cor:DualSimple}
$\widetilde K'$ is a simple knot in the lens space $Z_{\widetilde K}(2m)$.
\end{cor}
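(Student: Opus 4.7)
The plan is to invoke Theorem~\ref{thm:DualFSimple} applied to the pair $(Z,\widetilde K)$ with surgery slope $2m$, concluding that the dual knot $\widetilde K'$ is Floer simple in $Z_{\widetilde K}(2m)$, and then to appeal to the classification of Floer simple knots in lens spaces due to Rasmussen~\cite{RasBerge}. By Lemmas~\ref{lem:Lspace} and~\ref{lem:1Bridge}, both $Z$ and $Z_{\widetilde K}(2m)$ are L-spaces, so the only input still needed for Theorem~\ref{thm:DualFSimple} is that $\widetilde K$ is null-homologous in $Z$ with Seifert genus at most $m$.

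To establish this, I would lift a minimal genus Seifert surface. Let $\Sigma\subset S^3$ be a Seifert surface for $K$ of genus $m=g(K)$, and let $\widetilde\Sigma\subset Z$ be its preimage under the branched double cover $\pi\colon Z\to S^3$. Since the branch locus meets $\Sigma$ exactly along $\partial\Sigma=K$, the restriction $\widetilde\Sigma\to\Sigma$ is a $2$-fold cover branched along $\partial\Sigma$, and Riemann--Hurwitz gives $\chi(\widetilde\Sigma)=2\chi(\Sigma)=2-4m$, so $\widetilde\Sigma$ is a closed orientable surface of genus $2m$. Next, the unbranched cover $Z\setminus\widetilde K\to S^3\setminus K$ is classified by the surjection $H_1(S^3\setminus K)\to\mathbb Z/2\mathbb Z$, and the composition $\pi_1(\mathrm{int}(\Sigma))\to H_1(S^3\setminus K)\to\mathbb Z/2\mathbb Z$ is trivial, because every loop in $\mathrm{int}(\Sigma)$ is disjoint from $\Sigma$ after a normal pushoff and hence has linking number zero with $K$. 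Consequently the restricted cover of $\mathrm{int}(\Sigma)$ is disconnected, consisting of two copies of $\mathrm{int}(\Sigma)$ glued along $\widetilde K$ upon closure. Thus $\widetilde K$ separates $\widetilde\Sigma$ into two subsurfaces, exchanged by the deck involution $\sigma\colon Z\to Z$, each of genus $m$ with boundary $\widetilde K$. In particular $\widetilde K$ bounds a genus $m$ surface in $Z$, so it is null-homologous and satisfies $g(\widetilde K)\le m$.

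With all hypotheses verified, Theorem~\ref{thm:DualFSimple} applied to $(Z,\widetilde K,2m)$ yields that $\widetilde K'$ is Floer simple in the lens space $Z_{\widetilde K}(2m)$. To conclude, I would invoke Rasmussen's classification~\cite{RasBerge}: every Floer simple knot in a lens space is isotopic to the unique simple knot in its homology class. Applied to $\widetilde K'$, this is exactly the statement of the corollary. The $(1,1)$-bridge position supplied by Lemma~\ref{lem:1Bridge} is consistent with this identification and, depending on the precise formulation of the classification used, may serve as an additional input.

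The main obstacle I anticipate is the genus calculation, specifically verifying that $\widetilde K$ separates $\widetilde\Sigma$ and that the deck involution interchanges the two halves forcing them to have equal genus. Everything else is a direct application of stated lemmas and cited theorems, so the entire argument rests on this Riemann--Hurwitz and covering space computation.
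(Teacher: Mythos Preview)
Your approach is essentially the same as the paper's, and the genus/null-homology argument you spell out is correct (the paper simply asserts ``$m$ is also the genus of $\widetilde K$'' in the proof of Lemma~\ref{lem:Lspace} without writing down the Seifert-surface lift, so your treatment is more complete on that point).

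The one place you should tighten is the final step. The statement ``every Floer simple knot in a lens space is isotopic to the simple knot in its homology class'' is \emph{not} a theorem in \cite{RasBerge}; at the time of this paper it was a conjecture. What is actually available is \cite[Proposition~3.3]{HedBerge}: a knot in a lens space that is both Floer simple \emph{and} $(1,1)$ is simple. So the $1$--bridge position from Lemma~\ref{lem:1Bridge} is not optional window dressing---it is a required hypothesis, and the paper uses it exactly that way. Once you commit to invoking Hedden's result with the $(1,1)$ input, your proof is complete and matches the paper's.
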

\begin{proof}[{\bf Proof}]
Lemmas~\ref{lem:1Bridge} and \ref{lem:Lspace} and Theorem~\ref{thm:DualFSimple} imply that $\widetilde K'$ is Floer simple in $Z_{\widetilde K}(2m)$, Lemma~\ref{lem:1Bridge} also tells us that $\widetilde K'$ is $1$--bridge.
Using \cite[Proposition~3.3]{HedBerge}, we see that $\widetilde K'$ is simple.
\end{proof}

\begin{lem}\label{lem:SameHomo}
Let $T'$ be the knot dual to $T=T(2m+1,2)$ in $S^3_{T}(4m)\cong S^3_{K}(4m)$, $\widetilde T'$ be its preimage in $Z_{\widetilde K}(2m)$. Then
\newline
(1) $[K']=\pm[T']$ or $\pm(2m-1)[T']$ in $H_1(S^3_{K}(4m))$.
\newline
(2) $[\widetilde K']=\pm[\widetilde T']$ in $H_1(Z_{\widetilde K}(2m))$.
\end{lem}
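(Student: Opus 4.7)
My plan is to use the Heegaard Floer correction terms to constrain $[K']$ relative to $[T']$ modulo $4m$, and then to lift this constraint to the double cover. Throughout, let $Y$ denote the common prism manifold $S^3_K(4m)\cong S^3_T(4m)$, so $H_1(Y)\cong \mathbb Z/4m\mathbb Z$ and both $[K']$ and $[T']$ are generators; consequently I can write $[K']=a[T']$ for a unit $a\in(\mathbb Z/4m\mathbb Z)^{\times}$.

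For part (1), the identification $\mathrm{Spin}^c(Y)\cong\mathbb Z/4m\mathbb Z$ underlying the formula (\ref{eq:CorrSurg}) is determined by the chosen dual-knot generator of $H_1(Y)$, so the affine isomorphism $\phi$ appearing in (\ref{eq:AffCorr}) has linear coefficient $a^{\pm 1}$. Using $\varepsilon=+$ from Proposition \ref{prop:GenusBound} and $t_i(K)=t_i(T)$ from Corollary \ref{punctured Klein bottle}, condition (\ref{eq:AffCorr}) reduces to the self-symmetry
\[
d(S^3_T(4m),\phi(i))=d(S^3_T(4m),i),\quad i\in\mathbb Z/4m\mathbb Z.
\]
I would then substitute the explicit values of (\ref{eq:CorrT}) together with $t_i(T)=\lceil(m-i)/2\rceil$. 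The quadratic piece $(2m-i)^2/(4m)$ forces $a^2\equiv 1\pmod{4m}$, while matching the step-function contribution from the $t$-values pins the solution set down to $a\in\{\pm 1,\pm(2m-1)\}$; that each of these four units is realised by a suitable translation $b$ can be checked case by case (for the $\pm(2m-1)$ solutions, $b$ will be an explicit multiple of $2m$).

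For part (2), since $H_1(Y)\cong\mathbb Z/4m\mathbb Z$ has a unique subgroup of index two, the connected double cover of $Y$ is unique, so by Lemma \ref{lem:1Bridge} it is identified with $W=Z_{\widetilde K}(2m)$. Because $[K']$ generates $\mathbb Z/4m\mathbb Z$ it lies outside $2H_1(Y)$, hence $\widetilde K'$ is a connected double cover of $K'$ and $\pi_*[\widetilde K']=2[K']$, and similarly $\pi_*[\widetilde T']=2[T']$. The key arithmetic point is that $2(2m-1)\equiv -2\pmod{4m}$, so the four possibilities from part (1) collapse to $\pi_*[\widetilde K']=\pm\pi_*[\widetilde T']$ unconditionally. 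To upgrade this pushforward equality to $[\widetilde K']=\pm[\widetilde T']$ in $H_1(W)$, I would invoke Corollary \ref{cor:DualSimple}: both $\widetilde K'$ and $\widetilde T'$ are simple knots in the lens space $W$, and simple knots are determined up to isotopy by their homology class. The main obstacle I anticipate is a careful control of the kernel of $\pi_*\co H_1(W)\to H_1(Y)$, to rule out that $[\widetilde K']$ and $\pm[\widetilde T']$ differ by a nontrivial kernel element; a direct computation of $|H_1(W)|$ via the Mayer--Vietoris sequence for the $2m$-surgery on $\widetilde K\subset Z$ (with $H_1(Z)\cong\mathbb Z/(2m+1)\mathbb Z$), combined with the fact that any such kernel element is divisible by $2$ in $H_1(W)$ while $[\widetilde K']\mp[\widetilde T']$ is not, should close this gap.
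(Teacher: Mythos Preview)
Your approach to part (1) is essentially the paper's: one reduces via (\ref{eq:AffCorr}) and $\Delta_K=\Delta_T$ to a self-symmetry $d(S^3_T(4m),\phi(i))=d(S^3_T(4m),i)$, and then extracts $a\in\{\pm1,\pm(2m-1)\}$ from the explicit values in (\ref{eq:CorrT}). Two small remarks. First, ``the quadratic piece forces $a^2\equiv1\pmod{4m}$'' is not by itself enough when $m$ is not a prime power (there are then more than four square roots of $1$), so the real work is done by evaluating at a few specific labels; the paper uses $\phi(\{0,2m\})=\{0,2m\}$ together with the value at $i=1$. Second, you do not need to check that each of the four units is ``realised'' by some $b$: the statement only asserts $a$ lies in that set.

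Part (2) has a genuine gap, and your proposed fix is wrong. Working with $\pi_*\co H_1(W)\to H_1(Y)$ you correctly get $\pi_*[\widetilde K']=\pm\pi_*[\widetilde T']$, but then you must rule out a nonzero element of $\ker\pi_*$. Your claimed obstruction, that ``$[\widetilde K']\mp[\widetilde T']$ is not divisible by $2$'', fails: $H_1(W)\cong\mathbb Z/2m\mathbb Z\times\mathbb Z/(2m+1)\mathbb Z$, the classes $[\widetilde K'],[\widetilde T']$ lie in the $\mathbb Z/2m\mathbb Z$ factor, and every element there is divisible by $2$ (since $2m+1$ is odd). Also, invoking that $\widetilde K'$ and $\widetilde T'$ are simple is both irrelevant (this lemma is about homology, not isotopy) and forward-referencing ($\widetilde T'$ simple is Lemma~\ref{lem:TDualSimple}, proved afterwards). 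The clean fix---which is what the paper does---is to go the other way, via the transfer $\tau_*\co H_1(Y)\to H_1(W)$. One has $[\widetilde K']=\tau_*[K']$ and $[\widetilde T']=\tau_*[T']$ because $\widetilde K',\widetilde T'$ are the full preimages. Since every element of $\mathrm{im}\,\tau_*$ has order dividing $\gcd(4m,2m(2m+1))=2m$, the congruence $a\equiv\pm1\pmod{2m}$ (which holds for all four values of $a$ from part (1)) gives $[\widetilde K']=a[\widetilde T']=\pm[\widetilde T']$ directly, with no kernel analysis needed.
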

\begin{proof}[{\bf Proof}]
(1) Recall that $\mathrm{Spin}^c(S^3_K(4m))$ is an affine space over $H^2(S^3_K(4m))$. In other words, $H_1(S^3_K(4m))\cong H^2(S^3_K(4m))$ acts on $\mathrm{Spin}^c(S^3_K(4m))$. There is a standard way to identify $\mathrm{Spin}^c(S^3_K(4m))$ with $\mathbb Z/4m\mathbb Z$ in \cite[Section~4]{OSzKnot}: Let $W'_{4m}$ be the two-handle cobordism from $S^3_K(4m)$ to $S^3$, let $G$ be a Seifert surface for $K$ and let $\widehat G\subset W'_{4m}$ be obtained by capping off $\partial G$ with a disk. For any integer $i$, let $\mathfrak t_i\in\mathrm{Spin}^c(W'_{4m})$ be the unique Spin$^c$ structure satisfying
\begin{equation}\label{eq:c1Identify}
\langle c_1(\mathfrak t_i),[\widehat G]\rangle=2i-4m.
\end{equation}
Then we have an affine isomorphism $\sigma\co\mathrm{Spin}^c(S^3_K(4m))\to\mathbb Z/4m\mathbb Z$ which sends $\mathfrak t_i|_{S^3_K(4m)}$ to $i \pmod{4m}$.

Let $\mu$ be the meridian of $K$, then $\mu$ is isotopic to $K'$ in $S^3_K(4m)$. Using (\ref{eq:c1Identify}), we see that
\[\sigma(\mathfrak s+\mathrm{PD}[\mu])-\sigma(\mathfrak s)=[\mu]\cdot[G]=1.\]
So the action of $[K']$ on $\mathrm{Spin}^c(S^3_K(4m))$ is equivalent to adding 1 in $\mathbb Z/4m\mathbb Z$. There is a similar result when we replace $K$ with $T$.

We identify $S^3_{K}(4m)$ with $S^3_{T}(4m)$ by a
homeomorphism  $f\co S^3_{K}(4m)\to S^3_{T}(4m)$, which induces a symmetric affine isomorphism $\phi\co \mathrm{Spin}^c(S^3_{K}(4m))\to \mathrm{Spin}^c(S^3_T(4m))$. Clearly $\phi$ is equivariant with respect to the $H_1(S^3_K(4m))=H_1(S^3_T(4m))$ action, where we identify $H_1(S^3_K(4m))$ with $H_1(S^3_T(4m))$ using $f_*$.
If $\phi(i)=ai+b$, consider the actions of $[K']$ and $[T']$ on the Spin$^c$ structures, we get that $[K']=f_*([K'])$ acts as adding $a$ on $\mathrm{Spin}^c(S^3_T(4m))$. Since $[T']$ acts as adding $1$ on $\mathrm{Spin}^c(S^3_T(4m))$, $[K']=a[T']$.

We should have
\[
d(S^3_{T}(4m),i)=d(S^3_{K}(4m),i)=d(S^3_{T}(4m),\phi(i))
\]
for any $i\in\mathbb Z/4m\mathbb Z$,
where the first equality holds since $\Delta_K(t)=\Delta_T(t)$.
We will use (\ref{eq:CorrT}) to compute $d(S^3_{T}(4m),i)$. Note that $m=n$, $\zeta=0$.
Recall from the proof of Proposition~\ref{prop:GenusBound} that $\phi(\{0,2n\})=\{0,2n\}$.

When $m$ is even, it is straightforward to check that the minimal value of $d(S^3_{T}(4m),i)$ is $-\frac14+\frac1{4m}-1$, which is attained if and only if $i=1$ or $4m-1$. So $\phi(1)=1$ or $4m-1$. Since $\phi(0)=0$ or $2m$, $a=\phi(1)-\phi(0)\in\{\pm1,\pm(2m-1)\}\pmod{4m}$.

When $m$ is odd, $d(S^3_{T}(4m),0)\ne d(S^3_{T}(4m),2m)$, so we must have $\phi(0)=0$. We have $d(S^3_{T}(4m),1)=-\frac14+\frac1{4m}$. Since $m$ is odd, $4m+1\equiv5\pmod8$, so $-\frac14+\frac{i^2}{4m}-1\ne d(S^3_{T}(4m),1)$ for any integer $i$. It follows that $d(S^3_{T}(4m),i)=d(S^3_{T}(4m),1)$ only when $i\in\{1,2m\pm1,4m-1\}$. Hence $a=\phi(1)-\phi(0)=\phi(1)\in\{\pm1,\pm(2m-1)\}\pmod{4m}$.

In any case, we proved that $a\in\{\pm1,\pm(2m-1)\}\pmod{4m}$, thus our conclusion holds.

(2) Let \[\tau_*\co H_1(S^3_{K}(4m))\cong\mathbb Z/(4m\mathbb Z)\to H_1(Z_{\widetilde K}(2m))\cong\mathbb Z/(2m(2m+1)\mathbb Z)\]
be the transfer homomorphism, then $[\widetilde T']=\tau_*([T'])$ and $[\widetilde K']=\tau_*([K'])$. Since $\gcd(4m,2m+1)=1$, the order of any element in the image of $\tau_*$ is a divisor of $2m$. It follows from (1) that $[\widetilde K']=\pm[\widetilde T']$.
\end{proof}

\begin{lem}\label{lem:TDualSimple}
$\widetilde T'$ is a simple knot in the lens space $Z_{\widetilde K}(2m)$.
\end{lem}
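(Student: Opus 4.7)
The plan is to mimic the argument that gave Corollary~\ref{cor:DualSimple} for $\widetilde K'$, applied now to the torus knot $T=T(2m+1,2)$. Since $T$ is a $2$-bridge knot, its double branched cover $Z_T$ is a lens space, in particular an L-space. The manifold $S^3_T(4m)$ is the prism space $P(2m+1,m)$, whose $2$-fold cover (as constructed in Lemma~\ref{lem:1Bridge}) is $(Z_T)_{\widetilde T}(2m)$, so this cover is also an L-space; concretely it is a lens space because it covers a prism space. The hypotheses of Theorem~\ref{thm:DualFSimple} are then satisfied: $\widetilde T\subset Z_T$ is null-homologous with Seifert genus $m$ (one can see this because $T$ bounds a genus-$m$ Seifert surface in $S^3$ whose preimage in $Z_T$ is a genus-$m$ Seifert surface for $\widetilde T$), and $2m\ge 2g(\widetilde T)$. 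Therefore $\widetilde T'$ is Floer simple in the lens space $(Z_T)_{\widetilde T}(2m)$.

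To upgrade Floer simplicity to simplicity via \cite[Proposition~3.3]{HedBerge}, it remains to show that $\widetilde T'$ is $1$-bridge with respect to the standard genus-$1$ Heegaard splitting of this lens space. For $K$ this was done through Lemmas~\ref{lem:Compress} and~\ref{lem:Arc}, whose essential content was the structural statement that $S^3_K(4m)=\nu(\widehat P)\cup V$ with $V$ a solid torus and $K'$ meeting $\nu(\widehat P)$ in an $I$-fibre and $V$ in a boundary-parallel arc. I would establish the same structural statement for $T$ directly, bypassing the hyperbolicity assumption that was used in Lemma~\ref{lem:Arc}. The torus knot $T(2m+1,2)$ carries a classical once-punctured Klein bottle $P_T$ of boundary slope $4m$ (the standard non-orientable spanning surface of genus one), so $\widehat P_T\subset S^3_T(4m)$ is a Klein bottle whose tubular neighborhood $\nu(\widehat P_T)$ is a twisted $I$-bundle, and the complementary piece $V_T$ is a solid torus because $S^3_T(4m)$ is a prism space and this is precisely the Seifert-fibred decomposition of $P(2m+1,m)$ along $\widehat P_T$. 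The dual knot $T'$, being the core of the surgery solid torus, may be isotoped to sit as an $I$-fibre of $\nu(\widehat P_T)$ together with a boundary-parallel arc in $V_T$; this can be read off from the explicit Seifert-fibred picture of $T(2m+1,2)$-surgery.

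With this analog of Lemma~\ref{lem:Arc} in hand, the argument of Lemma~\ref{lem:1Bridge} goes through verbatim for $T$, showing that $\widetilde T'$ is a $1$-bridge knot in the lens space $(Z_T)_{\widetilde T}(2m)$. Combined with Floer simplicity from the previous paragraph, \cite[Proposition~3.3]{HedBerge} yields the desired conclusion that $\widetilde T'$ is simple. The main obstacle will be carrying out the Seifert-fibred verification that replaces Lemma~\ref{lem:Arc}, since the original proof relied on hyperbolicity to rule out essential tori and wrongly-positioned essential annuli in the exterior; for a torus knot one must instead write down the decomposition explicitly using the two exceptional fibres of the Seifert structure on $S^3\setminus T$ and check directly how the Klein bottle $\widehat P_T$ sits inside the prism manifold. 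Everything after this structural step is formal.
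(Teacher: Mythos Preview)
Your overall strategy matches the paper's exactly: show $\widetilde T'$ is both Floer simple and $1$--bridge in the lens space, then invoke \cite[Proposition~3.3]{HedBerge}. Your Floer simplicity argument is fine (and in fact slightly more explicit than the paper's, which simply says ``same argument as in Corollary~\ref{cor:DualSimple}''); noting that $T(2m+1,2)$ is $2$--bridge so $Z_T$ is a lens space is exactly right, and you only need $g(\widetilde T)\le m$, which follows from lifting the Seifert surface.

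Where you diverge from the paper is in the $1$--bridge step, i.e.\ in producing the analogue of Lemma~\ref{lem:Arc} for $T$. You propose to extract this from the Seifert-fibred structure of the torus-knot exterior and of the prism manifold. The paper instead observes that $T(2m+1,2)$ is the pretzel knot $P(2,-1,2m+3)$; from the pretzel diagram one reads off a once-punctured Klein bottle $P_T$ of boundary slope $4m$ whose complement in the knot exterior is a genus--$2$ handlebody in which the core of $A'$ is \emph{primitive}. This single observation replaces the entire hyperbolicity-based compression argument of Lemma~\ref{lem:Arc}: primitivity means there is a meridian disk of $H'_T$ meeting $A'$ once, so attaching the $2$--handle along $A'$ yields a solid torus and that same disk exhibits $T'\cap V_T$ as a boundary-parallel arc. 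Everything else then follows verbatim from Lemma~\ref{lem:1Bridge}.

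Your Seifert-fibred route can be made to work, but note that it is not quite as automatic as you suggest: knowing that $S^3_T(4m)$ is a prism space tells you the complement of \emph{some} Klein bottle is a solid torus, and one can argue that the Klein bottle is unique up to isotopy in a prism manifold, but you still need the boundary-parallel arc condition for $T'\cap V_T$, which is not an obvious consequence of the Seifert structure alone. The paper's pretzel/primitivity observation delivers both the solid torus and the parallelism disk in one stroke, which is why it is the cleaner argument.
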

\begin{proof}[{\bf Proof}]
The knot $T=T(2m+1,2)$ is also a pretzel knot $P(2,-1,2m+3)$. Using this pretzel diagram, it is easy to find a  once-punctured Klein bottle $P_T$ in the complement of $T$, such that the boundary slope of $P_T$ is $4m$. Moreover, the complement of a neighborhood of $P_T$ is a genus-$2$ handlebody with respect to which $T$ is primitive. So the same argument as in the proof of Lemma~\ref{lem:1Bridge} shows that $\widetilde T'$ is an $1$--bridge knot in the lens space $Z_{\widetilde K}(2m)$. Then the same argument as in the proof of Corollary~\ref{cor:DualSimple} shows that $\widetilde T'$ is simple.
\end{proof}

Now we are ready to prove Theorem~\ref{thm:DiSurg}.

\begin{proof}[{\bf Proof of Theorem~\ref{thm:DiSurg}}]
As mentioned in the first paragraph of this section, our theorem holds when $K$ is non-hyperbolic. So we assume $K$ is hyperbolic. Proposition~\ref{prop:GenusBound} and Corollary~\ref{punctured Klein bottle} imply that $\e=+$ and $n=m$. Corollary~\ref{cor:DualSimple} and Lemma~\ref{lem:TDualSimple} say that both $\widetilde K'$ and $\widetilde T'$ are simple knots in the lens space $Z_{\widetilde K}(2m)$. Now Lemma~\ref{lem:SameHomo} implies that $\widetilde K'$ and $\widetilde T'$ are isotopic up to orientation reversal. But this is impossible since the complement of $\widetilde K'$ is hyperbolic, while the complement of $\widetilde T'$ is Seifert fibered.
\end{proof}

\end{document}